\newtheorem{theorem}{Theorem}
\newtheorem{lemma}{Lemma}
\newtheorem{prop}{Proposition}
\newtheorem{corr}{Corollary}
\newtheorem{remark}{Remark}
\newtheorem{example}{Example}
\newtheorem{definition}{Definition}
\newtheorem{assum}{Assumption}[section]
\newcommand{\floor}[1]{\left\lfloor#1\right\rfloor}
\newcommand{\ceil}[1]{\left\lceil#1\right\rceil}
\DeclareRobustCommand{\compactright}{
\lhook\joinrel\relbar\joinrel\joinrel\joinrel\lhook\joinrel\rightarrow
}
\title{\textbf{Fractional and Integer Order Sobolev Spaces for Compact Metric Graphs}}
\author[AWADELKARIM]{ELSIDDIG AWADELKARIM}
\author[BOLIN]{DAVID BOLIN}
\author[SIMAS]{Alexandre B. Simas}
\address[AWADELKARIM]{\sc Elsiddig Awadelkarim\\ Statistics Program, Division of Computer, Electrical and Mathematical Sciences and Engineering, King Abdullah University of Science and Technology, Thuwal, KSA.}
\email{elsiddigawadelkarim.elsiddig@kaust.edu.sa}
\address[BOLIN]{\sc David Bolin\\ Statistics Program, Division of Computer, Electrical and Mathematical Sciences and Engineering, King Abdullah University of Science and Technology, Thuwal, KSA.}
\email{david.bolin@kaust.edu.sa}
\address[SIMAS]{\sc Alexandre B. Simas\\ Statistics Program, Division of Computer, Electrical and Mathematical Sciences and Engineering, King Abdullah University of Science and Technology, Thuwal, KSA.}
\email{alexandre.simas@kaust.edu.sa}
\date{\today}
\subjclass[2020]{Primary 35R02; Secondary   35A01 ,   35A02}
\keywords{Sobolev Spaces, Quantum Graphs, Eigenfunctions, Fractional Laplacian}
\begin{document}
\begin{abstract}
Given a compact metric graph $\Gamma$ and the Laplacian $\Delta_{\Gamma}$ coupled with standard (Kirchhoff) vertex conditions, solutions to fractional elliptic partial differential equations of the form $(\kappa^2 - \Delta_{\Gamma})^{\alpha/2}u=f$ on $\Gamma$ exhibit a distinctive regularity structure: even-order derivatives are continuous across vertices, while odd-order derivatives may be discontinuous. This non-standard smoothness property precludes the direct application of classical tools from real functional analysis. Because of this, we introduce and systematically study new families of Sobolev spaces tailored to this setting. We define these spaces, denoted $W^{\alpha,p}(\Gamma)$ and $H^{\alpha}(\Gamma)$, to respect the continuity constraints on even-order derivatives at vertices, while permitting discontinuities in odd-order derivatives. We establish their fundamental properties, including characterizations via interpolation and Sobolev-Slobodeckij norms, embedding theorems into H\"older and Lebesgue spaces, and compactness results. A central contribution in this investigation is the derivation of uniform bounds on the supremum norm of eigenfunctions for a class of Laplacians on metric graphs, a result of independent interest. Finally, we demonstrate that these spaces provide a natural framework for analyzing the regularity of solutions to fractional elliptic PDEs and SPDEs driven by Gaussian white noise on metric graphs, in particular, establishing a general characterization of the domain of the fractional powers of $(\kappa^2-\Delta_{\Gamma})$ and $(\kappa^2-\nabla(a\nabla))$ in terms of the Sobolev spaces we introduce, thereby 
extending all previously known characterizations in the literature, and improving the regularity results previously obtained to their sharp counterparts (with general non-integer fractional powers).
We also show that these spaces are fundamental to the characterization of Gaussian free fields on metric graphs.
\end{abstract}
\maketitle

\section{Introduction}
\label{sec:intro}
Increasingly rich and abundant network data have motivated the development of sophisticated modeling tools capable of capturing spatial and topological structure on graph-like domains. In many such applications, networks are naturally modeled as metric graphs; examples include traffic flow in road networks \cite{damilya}, scientific citation networks \cite{borovitsky}, and nanotechnology \cite{kuchment}. Unlike combinatorial graphs, where edges are abstract connections, metric graphs endow each edge with an intrinsic length, thereby enabling the representation of geometric and physical phenomena. A powerful tool for spatial modeling on these structures is provided by Whittle-Mat\'ern Gaussian random fields, whose efficient generation on metric graphs is enabled by the stochastic partial differential equation (SPDE) approach \cite{bern_gaussian_matern}. Given a compact metric graph $\Gamma$, these Gaussian fields arise as solutions of the SPDE 
\begin{equation}\label{eq:whittle}
    (\kappa^2-\Delta_{\Gamma})^{\alpha/2}u=\mathcal{W},
\end{equation} where $\mathcal{W}$ is a Gaussian white noise on $\mathbb{L}_2(\Gamma)$ (the space of square integrable functions on $\Gamma$), $\Delta_{\Gamma}$ is the edge-wise Laplacian restricted to an appropriate subspace, and the parameter $\alpha>0$ controls the smoothness of the Gaussian field $u$. Solutions to such SPDEs exhibit a characteristic parity-dependent regularity: even-order derivatives remain continuous across graph vertices, while odd-order derivatives may display jump discontinuities. This behavior fundamentally differs from classical Euclidean settings and obstructs the direct use of conventional tools from real functional analysis.
The incompatibility with classical Sobolev theory manifests already at second order. For a compact metric graph $\Gamma$ with edges $\mathcal{E}$, let $C(\Gamma)$ denote the space of real-valued continuous functions on $\Gamma$. The space
$H^1(\Gamma) := \bigoplus_{e\in\mathcal{E}}H^{1}(e)\cap C(\Gamma)$
equipped with the norm $\|u\|_{H^1(\Gamma)}^2 = \sum_{e \in \mathcal{E}} \|u|_e\|_{H^1(e)}^2$,
provides a natural analogue of the one-dimensional Sobolev space $H^1$, by enforcing continuity across vertices. One way to motivate this choice is by considering a squeezing limit of graph-like manifolds \cite{berk_2}. 
Further, by interpolation one has natural candidates for the spaces $H^{\alpha}(\Gamma)$ for $\alpha\in(0,1)$ \cite{bern_gaussian_matern, mathcomp_paper}.
In contrast, no canonical definition for $H^2(\Gamma)$ arises, even when derivatives exist edge-wise, they may suffer jumps, as the derivatives naturally are discontinuous at the vertices of degree larger than $2$ of $\Gamma$. This regularity behavior obstructs direct extension of Euclidean $H^2$ theory.
A complementary perspective comes from Gaussian free fields (GFFs). The massless GFF on a metric graph can be constructed by extending the discrete GFF on the combinatorial graph by assigning independent Brownian bridges to each edge \cite{lupu_2016}. The covariance function of the GFF is the Green's function of the Laplacian coupled with Dirichlet boundary conditions, both in the discrete and the continuum limiting case, equivalently their covariance operator is the inverse of the Dirichlet Laplacian $(-\Delta)^{-1}$. Their massive counterpart, called massive GFF, are defined as the centered Gaussian field with covariance operator $(\kappa^2 - \Delta)^{-1}$ with $\kappa\not=0$ \cite{sheffield_2007, rodriguez_2016}. The definition of massive GFF coincides with the definition of Whittle-Mat\'ern Gaussian fields on metric graphs given in \citeA{bern_gaussian_matern} in the case $\alpha = 1$. Furthermore, the Cameron-Martin space associated with the massive GFF is the energetic space of the operator $L=\kappa^2 - \Delta$. As we shall demonstrate, this space can be characterized using the Sobolev spaces $H^\alpha(\Gamma)$ we construct in this article.

In this article, we introduce and systematically study new families of Sobolev spaces tailored to the regularity structure induced by such equations on metric graphs. These spaces, denoted by $W^{\alpha,p}(\Gamma)$  and $H^{\alpha}(\Gamma)$, are defined by enforcing continuity of even-order derivatives at vertices while permitting discontinuities in odd-order derivatives. We extend these definitions to fractional orders using interpolation theory and the Sobolev-Slobodeckij norm, and establish rigorous characterizations, embedding theorems, and equivalences between the two classes. A central contribution of this work is proving Sobolev embedding results, including compact embeddings into H\"older and Lebesgue spaces. Furthermore, in our endeavor to demonstrate the applicability of these spaces, we derive uniform bounds on the supremum norm of eigenfunctions of the appropriate Laplacians on metric graphs, which is a result of independent interest and importance. 
We further show that the domains of fractional powers of the operator $(\kappa^2-\Delta_{\Gamma})^{\alpha/2}$ and $(\kappa^2-\nabla(a\nabla))^{\alpha/2}$ can be expressed in terms of the introduced Sobolev spaces,
establishing them as the natural function spaces for the analysis of fractional elliptic PDEs on metric graphs. Characterizations of Sobolev-type spaces as domains of fractional operators on metric graphs have been explored in \citeA{bern_gaussian_matern, mathcomp_paper, markov_paper}. In this work, we generalize those results by employing the Sobolev spaces $W^{\alpha,p}(\Gamma)$  and $H^{\alpha}(\Gamma)$ introduced here. Although the definitions of the spaces $W^{\alpha,p}(\Gamma)$  and $H^{\alpha}(\Gamma)$ do not require $\Gamma$ to be compact and extend to the non-compact setting, the characterizations established in this work rely on the compactness assumption of $\Gamma$, and we therefore assume compactness throughout to simplify the notation. Finally, we study the applications of the introduced Sobolev spaces to operators of the form $(\kappa^2-\nabla(a\nabla))^{\alpha/2}$ where $\kappa^2$ and $a$ are functions. These are important as they arise naturally through the study of generalized Whittle-Mat\'ern fields \cite{mathcomp_paper, lindgren_bolin_rue_2022}.\\
The paper is organized as follows. In Section~\ref{sec:setting}, we introduce and discuss the mathematical setting and notation needed for metric graphs. In Section~\ref{sec:sobolev_definition}, we define the $W$  and $H$  spaces: the former via the Sobolev-Slobodeckij norm, and the latter via interpolation of integer order Sobolev spaces. We prove characterization theorems, Sobolev embeddings, and the key equivalence $H^{\alpha}(\Gamma)\cong W^{\alpha,2}(\Gamma)$. Section~\ref{sec:eigen_bounds} is devoted entirely to the uniform eigenfunction bounds, an important result used throughout the paper. In Sections~\ref{sec:appli_1} and~\ref{sec:appli_2} we apply these spaces to characterize the regularity of solutions to the fractional elliptic PDEs $(\kappa^2-\Delta_{\Gamma})^{\alpha/2}u=f$ and $(\kappa^2-\nabla(a\nabla u))^{\alpha/2}u=f$ where in the latter equation $\kappa$ and $a$ are functions. We also elucidate the natural connection between these Sobolev spaces and Gaussian free fields on metric graphs.

\section{The Setting and Notation}\label{sec:setting}
\subsection{Compact Metric Graphs} A compact metric graph $\Gamma$ is a compact metric space constructed from a finite graph $\bar{\mathcal{G}}=(\bar{\mathcal{V}},\bar{\mathcal{E}})$, where $\bar{\mathcal{V}}$ denotes the set of vertices and $\bar{\mathcal{E}}$ the set of edges, together with a collection of positive real numbers $\{l_{\bar{e}}\}_{\bar{e}\in\bar{\mathcal{E}}}$ representing the lengths of the edges $\bar{e}\in\bar{\mathcal{E}}$. Formally, each edge $\bar{e}\in\bar{\mathcal{E}}$ is identified with the closed interval $[0,l_{\bar{e}}]$, and the graph $\bar{\mathcal{G}}$ prescribes how these intervals are glued together at their endpoints to form the space $\Gamma$.

The mathematical construction of $\Gamma$ is as follows. We begin by defining the disjoint union of intervals: $$\bar{\Gamma} = \coprod_{\bar{e} \in \bar{\mathcal{E}}} [0, l_{\bar{e}}]$$ 
where, for any subset $A \subseteq \bar{\mathcal{E}}$  and family of sets $\{S_{\bar{e}}\}_{\bar{e} \in A}$, the disjoint union is defined as
$$
\coprod_{\bar{e} \in A} S_{\bar{e}} = \bigcup_{\bar{e} \in A} \{(\bar{e}, s) : s \in S_{\bar{e}}\}.
$$

Let $\pi_{\bar{\mathcal{V}}} : \coprod_{\bar{e} \in \bar{\mathcal{E}}} \{0, l_{\bar{e}}\} \to \bar{\mathcal{V}}$ be a surjective map that satisfies $\pi_{\bar{\mathcal{V}}}^{-1}(\bar{v})\subset \coprod_{\bar{e}\in\bar{\mathcal{E}}_{\bar{v}}}\{0,l_{\bar{e}}\}$ for every $\bar{v}\in\bar{\mathcal{V}}$ where $\bar{\mathcal{E}}_{\bar{v}} = \{\bar{e}\in\bar{\mathcal{E}} : \bar{v}\in \bar{e}\}$ is the set of edges in $\bar{\Gamma}$ incident to $\bar{v}$. The map $\pi_{\bar{\mathcal{V}}}$ assigns each endpoint of an interval to its corresponding vertex in $\bar{\mathcal{G}}$. We define the equivalence relation $\sim$ on $\bar{\Gamma}$ by $a\sim b$ for $a,b\in\bar{\Gamma}$ if either $i)$ $a=b$ or $ii)$ $\pi_{\bar{\mathcal{V}}}(a)=\pi_{\bar{\mathcal{V}}}(b)$ and $a,b\in \coprod_{\bar{e} \in \bar{\mathcal{E}}} \{0, l_{\bar{e}}\}$.
The metric graph $\Gamma$ is then defined as the quotient space 
$$\Gamma = \bar{\Gamma}/\sim.$$ 
The space $\Gamma$ is equipped with the quotient topology induced by the natural projection $\pi:\bar{\Gamma}\to\Gamma$ resulting from the quotient, i.e., the function that maps points $a$ in $\bar{\Gamma}$ to their equivalence class $\pi(a)$. Denote the set of vertices of $\Gamma$ by $\mathcal{V}= \pi(\pi^{-1}_{\bar{\mathcal{V}}}(\bar{\mathcal{V}}))$ and the set of edges $\mathcal{E}=\{\pi(\{\bar{e}\}\times[0,l_{\bar{e}}]): \bar{e}\in\bar{\mathcal{E}}\}$. With this definition every edge $e\in\mathcal{E}$ has a corresponding edge $\bar{e}\in\bar{\mathcal{E}}$. For every edge $e\in\mathcal{E}$, let $\bar{e}\in\bar{\mathcal{E}}$ it corresponding edge in $\bar{\mathcal{E}}$ denote $l_e=l_{\bar{e}}$ and define $\partial e=\pi(\{\bar{e}\}\times\{0,l_{\bar{e}}\})\subset \mathcal{V}$ to be the set of vertices at the end of $e$ (boundary of $e$ or vertices incident to $e$). Observe that for $s\in\Gamma\backslash\mathcal{V}$, $\pi(s)$ is a simpleton, and if $s\in\mathcal{V}$, then $\#\pi_{\bar{\mathcal{V}}}(s) = \mathrm{deg}(\pi_{\bar{\mathcal{V}}}(s))$, where $\mathrm{deg}(\pi_{\bar{\mathcal{V}}}(s))$ is the degree of the vertex $\pi_{\bar{\mathcal{V}}}(s)$ in the combinatorial graph $\bar{\mathcal{G}}$. Thus, we define $\mathrm{deg}(s) = \#\pi_{\bar{\mathcal{V}}}(s)$ for $s\in\mathcal{V}$. We call the maps $\zeta_1,\zeta_2:[0,l_e]\to e$ defined by 
$$\zeta_1(t)=\pi((\bar{e},t)), \text{  and   }\zeta_2(t)=\zeta_1(l_e-t),$$ the \emph{natural parameterizations} of $e$. Observe that the natural parameterizations correspond to the two possible directions of the edge $e$. These maps induce a metric on $e$ by $d(x,y)=\inf\{|\tilde{x}-\tilde{y}|:\tilde{x}\in\zeta_1^{-1}(x),\tilde{y}\in\zeta_1^{-1}(y)\}$, and $e$ equipped with this metric is isometrically isomorphic to either the interval $[0,l_e]$ or the circle $\mathbb{S}^1_{l_e}=\{(x,y)\in\mathbb{R}^2:x^2+y^2=l_e^2\}$. Define the set of possible directions at $v\in\mathcal{V}$ by
\begin{equation*}
    \textrm{Dir}(v) = \left\{ (e,\zeta) : e\in\mathcal{E},\zeta \textrm{ is a parameterization of } e \textrm{ such that } \zeta(0)=v\right\},
\end{equation*}
clearly $\#\textrm{Dir}(v) = \textrm{deg}(v)$. A path on $\Gamma$ is a continuous map $p:[0,l]\to \Gamma$ and satisfies that $p|_{p^{-1}(e\backslash\partial e)}:p^{-1}(e\backslash\partial e)\to e\backslash \partial e$ is a local isometry for every $e\in\mathcal{E}$. Clearly, for a path $p$ the set $p^{-1}(\mathcal{V})$ is finite, and we call $l$ the length of $p$. The topology of $\Gamma$ is metrizable with the metric $$d(x,y) = \inf\left\{l: l>0, p:[0,l]\to\Gamma, p(0)=x, p(l)=y\right\},$$
which is known as the geodesic distance on $\Gamma$.

\subsection{Additional Notation and Derivatives on $\Gamma$} In Section~\ref{sec:intro} we noted that solutions to Equation \eqref{eq:whittle} exhibit discontinuity of odd-order derivatives. Consequently, these derivatives cannot be meaningfully defined as functions on $\Gamma$. However, they arise naturally as well-defined functions on $\bar{\Gamma}$. In this section, we establish the notion of derivative on $\Gamma$ and $\bar{\Gamma}$, identifying the two notions of derivatives when they are consistent.

We first introduce the notions of parameterization and reparameterization. A parameterization of the edges of $\Gamma$ (or simply a parameterization of $\Gamma$) is a collection $\eta=\{\eta_e\}_{e\in\mathcal{E}}$, where each $\eta_e$ is a natural parameterization of the edge $e$, that is, we choose a direction for each edge (note that we have $2^{\#\mathcal{E}}$ possible parameterizations).
Such a choice provides a global identification of the edges of $\Gamma$ with the edges of $\bar{\Gamma}=\coprod_{\bar{e} \in \bar{\mathcal{E}}} [0, l_{\bar{e}}]$ ignoring the vertex identifications encoded by $\pi$. Given a parameterization $\eta$ and a function $f:\bar{\Gamma}\to\mathbb{R}$, we define the reparameterization of $f$ under $\eta$ as the function $f_{\eta}:\bar{\Gamma}\to\mathbb{R}$ defined by
$$f_{\eta}((\bar{e},t)) = f\left(\Big(\bar{e},\lim_{\epsilon\to0^+}\eta_e^{-1}\Big\{\pi\big[(\bar{e},t - \textrm{sign}(t-\epsilon)\epsilon)\big]\Big\}\Big)\right).$$
The limit in the expression above is a technical device to allow $t\in[0,l_e]$ in the case $\#\partial e=1$, but for $\#\partial e=2$ it is not needed and one can directly set $\epsilon=0$. Note that the reparameterization either keeps the direction of $e$ or flips the direction. Therefore, it is an involution operation; that is $$(f_{\eta})_{\eta} = f,\quad \forall f:\bar{\Gamma}\to\mathbb{R}.$$
For each edge $\bar{e}\in\bar{\mathcal{E}}$ define $f_{\bar{e}}:[0,l_{\bar{e}}]\to\mathbb{R}$ by $f_{\bar{e}}(t)=f((\bar{e},t))$.
If a function $f:\bar{\Gamma}\to\mathbb{R}$ satisfies $\#f(\pi^{-1}(x))=1$ for every $x\in\Gamma$ we identify $f$ with the well-defined function $f\circ\pi^{-1}:\Gamma\to\mathbb{R}$. Every function $f:\Gamma\to\mathbb{R}$ can be extended to a function $\bar{f}$ defined on $\bar{\Gamma}$ by $\bar{f}=f\circ\pi$. Therefore, all the notions defined previously apply to functions on $\Gamma$ and produce functions defined on $\bar{\Gamma}$.\\

Next, we define the notions of derivatives we will use in this article. Let $f\in\bar{\Gamma}\to\mathbb{R}$ and let $\eta=\{\eta_e\}_{e\in\mathcal{E}}$ be a parameterization of $\Gamma$. For $e\in\mathcal{E}$ and $t\in(0,l_{e})$ define the weak derivative given $\eta$ as $D_{\eta}^kf:\bar{\Gamma}\to\mathbb{R}$ by 
$$D_{\eta}f((\bar{e},t)) = \left((f_{\eta})_{\bar{e}}\right)'(t),$$
where $f'$ denotes the weak derivative of the real function $f$.
Let $\epsilon>0$, and let $f:[0,\epsilon)\to\mathbb{R}$. We define the right lateral derivative at $0$ by $$\partial_+f(0) = \lim_{h\to0^+}(f(h)-f(0))/h,$$ provided that the limit exists. Higher-order right lateral derivatives are defined recursively for $k\in\mathbb{N}$ by $\partial_+^kf=\partial_+^{k-1}(\partial_+f)$, we denote $\partial_+^0f=f$. Analogously, define the left lateral derivatives for functions defined on $(-\epsilon,0]$. For $f:\Gamma\to\mathbb{R}$, $e\in\mathcal{E}$, $v\in\partial e$, and a parametrization $\zeta$ of $e$ that satisfies $\zeta(0)=v$ (i.e. $(e,\zeta)\in\textrm{Dir}(v)$), define the directional derivative of $f$ at $v$ in the direction $(e,\zeta)$ as
$$\partial_{e,\zeta}f(v)=\partial_+ (f\circ\zeta)(0).$$
The outward derivative at $v$ in the direction $(e,\zeta)$ is defined by $\bar{\partial}_{e,\zeta}f(v) = -\partial_{e,\zeta}f(v)$.
When $e$ is not a cycle (i.e., $\#\partial e=2$) there is a unique parameterization $\zeta$ of $e$ satisfying $\zeta(0)=v$, in this case, we write simply $\partial_{e}f(v)=\partial_{e,\zeta}f(v)$ and refer to it as the inward derivative at $v$ along $e$. Similarly for the outward derivative $\bar{\partial}_{e,\zeta}f(v)$, in the case $\#\partial e=2$, we write $\bar{\partial}_{e}f(v)=\bar{\partial}_{e,\zeta}f(v)$ and refer to it as the outward derivative.
When $f$ is regular, denote the derivatives at the endpoints $(\bar{e},0)$ and $(\bar{e},l_e)$, corresponding to the vertices in $\partial e$, by 
$$D_{\eta}f((\bar{e},0)) = \partial_+(f_{\eta})_{\bar{e}}(0),\; \text{and } D_{\eta}f((\bar{e},l_{e})) = \partial_-(f_{\eta})_{\bar{e}}(l_{e}).$$
Higher order derivatives are defined analogously.  An easy and crucial observation is that $D^2$ is invariant under reparameterization; more precisely, $D^2_{\eta}f = D^2_{\tilde{\eta}}f$ for any parameterizations $\eta$ and $\tilde{\eta}$ of $\Gamma$; therefore we only write $D^{2k}$ for the even derivatives without specifying the parameterization. For a function $f:\Gamma\to\mathbb{R}$ we define its derivative given $\eta$ as $D_{\eta}^kf:\bar{\Gamma}\to\mathbb{R}$ by $D_{\eta}^kf=D_{\eta}^k(f\circ\pi)$. The map $\pi$ induces a parameterization $\{\pi_e\}_{e\in\mathcal{E}}$, denote $\nabla f = D_{\pi}$f and $\Delta f = \nabla^2f=D^2f$. Table~\ref{tab:derivatives} summarizes the introduced notation relating to derivatives on metric graphs.\\

\begin{table}[ht]
\centering
\small
\begin{tabular}{p{1.5cm}p{10.3cm}}
\toprule
\textbf{Notation} & \textbf{Definition and Remarks} \\
\midrule
$D_\eta f$ &  $D_\eta f((\bar{e},t)) = \big((f_\eta)_{\bar{e}}\big)'(t)$\\ & Weak derivative w.r.t.\ parameterization $\eta$\\
\addlinespace

$D_\eta^k f$  & Recursively $D_\eta^k f = D_\eta(D_\eta^{k-1} f)$\\ & Higher-order weak derivatives ($\eta$-dependent)\\
\addlinespace

$\nabla f$ &  $\nabla f = D_\pi f$ \\ & Derivative w.r.t. the parameterization induced by the projection $\pi$.\\
\addlinespace

$D^{2k} f$ & $D^{2k}_\eta f$ \\ & Independent of $\eta$\\
\addlinespace

$\Delta f$ &  $\Delta f = D^2 f = \nabla^2 f$\\ & Laplacian on the metric graph, well-defined without reference to $\eta$. \\
\addlinespace

$\partial_{e,\zeta} f(v)$ &  Inward derivative of $f$ at the vertex $v$ in the direction $(e,\zeta)$. \\
\addlinespace

$\partial_e f(v)$ &  $\partial_e f(v) = \partial_{e,\zeta} f(v)$ when the edge $e$ is not a loop.\\
\addlinespace

$\bar{\partial}_{e,\zeta} f(v)$ &  Outward derivative of $f$ at the vertex $v$ in the direction $(e,\zeta)$.\\
\addlinespace

$\bar{\partial}_e f(v)$ &  $\bar{\partial}_e f(v) = \bar{\partial}_{e,\zeta} f(v)$ when the edge $e$ is not a loop \\
\addlinespace
\bottomrule
\addlinespace
\end{tabular}
\caption{Summary of derivative notations for $f:\bar{\Gamma}\to\mathbb{R}$. All derivatives for $f:\Gamma \to \mathbb{R}$ are defined via lift to $\bar{f} = f \circ \pi$ on $\bar{\Gamma}$.}
\label{tab:derivatives}
\end{table}

\subsection{$\mathbb{L}_p$ and H\"older Spaces}
Now, we define natural extensions of standard functional spaces. For $p\in(0,\infty)$ let $\mathbb{L}_p(0,l)$ denote the space of measurable functions $f:(0,l)\to\mathbb{R}$ that satisfy $$\|f\|_{\mathbb{L}_p(0,l)}=\left(\int_0^l|f(x)|^pdx\right)^{1/p}<\infty,$$
and for $p=\infty$ let $\mathbb{L}_{\infty}(0,l)$ be the space of measurable functions that satisfy
$$\|f\|_{\mathbb{L}_{\infty}(0,l)} = \inf \{x\geq0: f^{-1}(\mathbb{R}\backslash[-x,x]) \textrm{ is of measure } 0\}.$$
For a function $f:e\to\mathbb{R}$ we define $$\int_e f(x)dx = \int_{0}^{l_e}(f\circ \zeta)(x)dx$$ for any natural parameterization $\zeta$ of $e$. Note that this definition is well-defined. Indeed, there are only $2$ parameterizations, let $\tilde{\zeta}$ be the second parameterization, by substitution $y=(\zeta^{-1}\circ\tilde{\zeta})(x)$ we have 
\begin{equation*}
    \int_{0}^{l_e}f(\circ\zeta)(x)dx = \int_0^{l_e}(f\circ\tilde{\zeta})(y)dy.
\end{equation*}

Let $\mathbb{L}_p(e)$ be the space of functions on $e$ that satisfy $f\circ\zeta\in\mathbb{L}_p(0,l_e)$, equipped with the norm $\|f\|_{\mathbb{L}_p(e)} = \|f\circ\zeta\|_{\mathbb{L}_p(0,l_e)}$. For $f:\Gamma\to\mathbb{R}$, we set $$\int_{\Gamma}f(x)dx = \sum_{e\in\mathcal{E}}\int_ef(x)dx,\quad \|f\|_{\mathbb{L}_p(\Gamma)} = \left(\int_{\Gamma}|f(x)|^pdx\right)^{1/p},$$
$$\|f\|_{\mathbb{L}_{\infty}(\Gamma)} = \max_{e\in\mathcal{E}}\|f_e\|_{\mathbb{L}_{\infty}(e)}.$$
These definitions yield the direct sum decomposition
$$\mathbb{L}_p(\Gamma)=\bigoplus_{e\in\mathcal{E}}\mathbb{L}_p(e).$$
For $k\in\mathbb{N}_0$ and an interval $I\subset\mathbb{R}$, $C^{k}(I)$ is the space of functions $f:I\to\mathbb{R}$ whose $k$-th derivative is defined and continuous on $I$, where the derivative at the boundary points of $I$ is defined as lateral derivatives. For $\lambda\in(0,1]$ define $C^{k,\lambda}(I)\subset C^{k}(I)$ as the subspace of functions whose $k$-th derivative is $\lambda$-H\"older continuous, and set $C^{k,0}(I)=C^k(I)$. For an edge $e$ with different endpoints (i.e., $\#\partial e=2$) define $C^{k,\lambda}(e)$ as the space of functions $f:e\to\mathbb{R}$ that satisfy $f\circ\zeta\in C^{k,\lambda}([0,l_e])$ for some (and hence every) natural parameterization $\zeta$. In the case where $\#\partial e=1$ define $C^{k,\lambda}(e)$ to be the space of functions $f:e\to\mathbb{R}$ that satisfy $f\circ\zeta\in C^{k,\lambda}([0,l_e])$ and $f\circ\xi\in C^{k,\lambda}([-l_e/2,l_e/2])$, where $\zeta$ is a natural parameterization of $e$ and $\xi:[-l_e/2,l_e/2]\to\mathbb{R}$ is the map defined by
$$\xi(t)=\begin{cases}
    \zeta(t), \quad t\in[0,l_e/2],\\ \zeta(l_e+t), \quad t\in[-l_e/2,0].
\end{cases}$$
\begin{definition}
    Let $C^{(0),0}(\Gamma)=C^{(0)}(\Gamma)=C(\Gamma)$, and for $\gamma\in(0,1]$ define $C^{(0),\gamma}(\Gamma)$ to be the space of $\gamma$-H\"older continuous functions on $\Gamma$. For $k\in\mathbb{N}_0$ and $\gamma\in[0,1]$ define the H\"older space $C^{(k),\gamma}(\Gamma)$ to be space of functions $\varphi\in\bigoplus_{e\in\mathcal{E}}C^{(k),\gamma}(e)$ that satisfy $D^{2j}\varphi\in C^{(0),\gamma}(\Gamma)$ for every $j\in\{0,1,\cdots,\floor{\frac{k}{2}}\}$. To simplify the notation we write $C^{(k)}(\Gamma)=C^{(k),0}(\Gamma)$.
\end{definition}
Note that if $\varphi\in C^{(k),\gamma}(\Gamma)$ then the even-order derivative $D^{2j}\varphi\in C^{0,1}(\Gamma)$ for every \mbox{$j\in\{0,1,\dots,2\floor{\frac{k}{2}}\}\cap[0,k)$}. Indeed, since we have $D^{2j}\varphi\in C(\Gamma)$ and \mbox{$D^{2j+1}_{\eta}\varphi\in\bigoplus_{e\in\mathcal{E}}C(e)$} for any parameterization $\eta$, the mean value theorem along shortest paths implies Lipschitz continuity: for any $x,y\in\Gamma$, let $v_1,v_2,\dots,v_k\in\mathcal{V}$ be a sequence of vertices along a shortest path between $x$ and $y$. Denote $v_0=x$ and $v_{k+1}=y$. We have
\begin{equation}\label{eq:lip_proof}
    \begin{split}
        |D^{2j}\varphi(x)-D^{2j}\varphi(y)|&\leq \sum_{i=0}^k|D^{2j}\varphi(v_i)-D^{2j}\varphi(v_{i+1})|\\&\leq |\mathcal{V}|\sup_{z\in\Gamma}|D^{2j+1}_{\eta}\varphi(z)|d(x,y).
    \end{split}
\end{equation}
Which shows that $D^{2j}\varphi\in C^{0,1}(\Gamma)$.

\section{Sobolev Spaces}
\label{sec:sobolev_definition}
In this section, we introduce the Sobolev spaces $W^{\alpha,p}(\Gamma)$ and $H^{\alpha}(\Gamma)$ and establish their key characterizations. While the definitions of these spaces do not require $\Gamma$ to be compact and extend to the non-compact graphs, the characterizations presented here rely on the compactness assumption of $\Gamma$, and we therefore assume compactness throughout to simplify the notation. For normed spaces $(V_1,\|\cdot\|_1)$ and $(V_2,\|\cdot\|_2)$, we write $V_1\hookrightarrow V_2$ to denote a continuous embedding, i.e., $V_1\subset V_2$ and the inclusion map $\iota:V_1\to V_2$ is continuous. If, in addition, the inclusion $\iota$ is compact, we write $V_1 \compactright V_2$. When continuous embedding holds in both directions ($V_1\hookrightarrow V_2$ and $V_2\hookrightarrow V_1$), we write $V_1\cong V_2$, and we say that $V_1$ and $V_2$ are isomorphic.
\subsection{$W$ Spaces}
For $(\alpha,p)\in\mathbb{R}_+^2$ and an edge $e$, we define the space $W^{\alpha,p}(e)$ by pushing forward the functions in the classical fractional Sobolev space $W^{\alpha,p}(0,l_e)$ on the interval $[0,l_e]$ via the natural parameterizations of $e$. We now give a precise definition.
\begin{definition}
For $(\alpha,p)\in\mathbb{N}\times [1,\infty)$ and $e\in\mathcal{E}$, define $W^{\alpha,p}(e)$ as the space of functions $\varphi\in \mathbb{L}_p(e)$ such that for every $j\in\{0,\dots,\alpha\}$ the weak derivative $D^j\varphi$ exists and is in $\mathbb{L}_p(e)$. For $\alpha\in(0,1)$ define $W^{\alpha,p}(e)$ to be the space of functions $\varphi\in \mathbb{L}_p(e)$ that satisfy $|\varphi|_{\alpha,p,e}<\infty$, where
$$|f|_{\gamma,p,e}^p = \int_{e}\int_{e} \frac{|f(x)-f(y)|^p}{d(x,y)^{1+\gamma p}}dxdy.$$
For $\alpha\in(1,\infty)\backslash\mathbb{N}$ define $W^{\alpha,p}(e)$ as the space of functions $\varphi\in W^{\floor{\alpha},p}(e)$ such that $D^{\floor{\alpha}}\varphi\in W^{\alpha-\floor{\alpha},p}(e)$. We equip $W^{\alpha,p}(e)$ with the norm $$\|f\|_{W^{\alpha,p}(e)}^p = \sum_{j=0}^{\floor{\alpha}}\|D^{j}f\|_{\mathbb{L}_p(e)}^p + |D^{\floor{\alpha}}f|_{\alpha-\floor{\alpha},p,e}^p.$$
We call the norm $\|.\|_{W^{\alpha,p}(e)}$ the Sobolev-Slobodeckij norm.
\end{definition}
Regularity results such as \citeA[Theorem~3 and Example~1]{bern_gaussian_matern} demonstrate that patching together the edge-wise spaces $W^{\alpha,p}(e)$ to construct general spaces that capture the regularity structures requires care. In Section~\ref{sec:appli_1} we define the fractional power operator $(\kappa^2-\Delta_{\Gamma})^{\beta}$, but for the sake of motivating the next definitions, we allow ourselves to use it in this paragraph without defining it. Our goal is to define a space $W^{\alpha,p}(\Gamma)\subset \bigoplus_{e\in\mathcal{E}}W^{\alpha,p}(e)$ that can be used to describe the regularity of the solutions of elliptic partial differential equations on metric graphs. In particular, if $f\in W^{\alpha,2}(\Gamma)$ and $u$ is a solution of the PDE $(\kappa^2-\Delta_{\Gamma})^{\beta}u=f$, then we expect $u\in W^{\alpha+2\beta,2}(\Gamma)$. It is well-known that a similar property holds for the spaces $W^{\alpha,2}(e)$ (see \citeA[Theorem 8.13]{gilbarg_trudinger} and \citeA[Proposition 5.38]{demengel}). The following example illustrates why one cannot directly define $W^{\alpha,p}(\Gamma)$ by requiring smoothness along paths.
\begin{example}
    Let $u\in\bigoplus_{e\in\mathcal{E}}C^{\infty}(e)$, and suppose we require that $u\circ p\in C^{1}([0,l])$ for every path $p:[0,l]\to\Gamma$. Then, at any vertex $v$ of degree greater than $2$, all derivatives of $u$ must vanish. Indeed, let $v$ be a vertex incident to three distinct edges $e_1,e_2,e_3$, and let $\zeta_1,\zeta_2,\zeta_3$ be natural parameterizations of the these edges such that $\zeta_i(0)=v$. Define the paths:
$$p_1(t)=\begin{cases}
    \zeta_2(-t),\; t\in[-l_{e_2},0]\\
    \zeta_1(t),\; t\in[0,l_{e_1}]
\end{cases}\quad p_2(t)=\begin{cases}
    \zeta_2(-t),\; t\in[-l_{e_2},0]\\
    \zeta_3(t),\; t\in[0,l_{e_3}]
\end{cases}$$
$$p_3(t)=\begin{cases}
    \zeta_3(-t),\; t\in[-l_{e_3},0]\\
    \zeta_1(t),\; t\in[0,l_{e_1}].
\end{cases}$$
We assume that $(u\circ p_1)'(0)=(u\circ p_2)'(0)=(u\circ p_3)'(0)$. Now, observe that $\partial_+(u\circ p_1)(0)=\partial_{e_1}u$ and $\partial_-(u\circ p_1)(0)=-\partial_{e_2}u$ hence $\partial_{e_1}u=-\partial_{e_2}u$. Similarly, from $p_2$, we get $\partial_{e_3}u=-\partial_{e_2}u$, and from $p_3$, we get $\partial_{e_1}u=-\partial_{e_e}u$. Combining these three equalities yields $$\partial_{e_1}u=\partial_{e_2}u=\partial_{e_3}u=0.$$
This forces all first derivatives to vanish at any vertex of degree at least three, a condition too restrictive to model general solutions of PDEs.
\end{example}
The previous example and \citeA[Theorem~3]{bern_gaussian_matern} motivate our definition of Sobolev spaces on metric graphs. The following definition generalizes the Sobolev-Slobodeckij characterization of fractional Sobolev spaces on intervals (cf.~\citeA{di_nezza}) to metric graphs.

\begin{definition}\label{def:W_Gamma_1}
For $(\alpha,p)\in\mathbb{N}\times [1,\infty)$, define $W^{\alpha,p}(\Gamma)$ as the space of functions $\varphi\in \bigoplus_{e\in\mathcal{E}}W^{\alpha,p}(e)$ such that for every $j\in\{0,\dots,\floor{\frac{\alpha-1}{2}}\}$ the weak derivative $D^{2j}\varphi$ exists and is continuous on $\Gamma$. For $\alpha\in(0,1)$, define $W^{\alpha,p}(\Gamma)$ as the space of functions $\varphi\in \mathbb{L}_p(\Gamma)$ that satisfy $|\varphi|_{\alpha,p}<\infty$, where
$$|f|_{\gamma,p}^p = \int_{\Gamma}\int_{\Gamma} \frac{|f(x)-f(y)|^p}{d(x,y)^{1+\gamma p}}dxdy.$$
\sloppy For $\alpha\in(1,\infty)\backslash\mathbb{N}$ define $W^{\alpha,p}(\Gamma)$ to be the space of functions $\varphi\in \bigoplus_{e\in\mathcal{E}}W^{\alpha,p}(e)\cap W^{\floor{\alpha},p}(\Gamma)$ that satisfy $D^{\floor{\alpha}}\varphi\in W^{\alpha-\floor{\alpha},p}(\Gamma)$ if $\floor{\alpha}$ is even.
We equip $W^{\alpha,p}(\Gamma)$ with the norm $$\|f\|_{W^{\alpha,p}(\Gamma)}^p = \sum_{e\in\mathcal{E}}\|f\|_{W^{\alpha,p}(e)}^p + |D^{2\floor{\floor{\alpha}/2}}f|_{\alpha-\floor{\alpha},p}^p.$$
For $p=\infty$ and $\alpha\in\mathbb{N}$ define $W^{\alpha,\infty}(\Gamma)$ as the space of functions $\varphi\in \bigoplus_{e\in\mathcal{E}} W^{\alpha,\infty}(e)$ such that for every $j\in\{0,\dots\floor{\frac{\alpha-1}{2}}\}$ the weak derivative $D^{2j}\varphi$ exists and is continuous on $\Gamma$.
\end{definition}

Although the path formulation is not appropriate to define the Sobolev spaces, it is valid for $\alpha\in(0,1]$, where an equivalent definition of $W^{\alpha,p}(\Gamma)$ is given by requiring that the pullback along every path lies in the corresponding fractional Sobolev spaces. That is, let $\mathcal{P}(\Gamma)$ denote the set of all paths on $\Gamma$. For each path $\zeta$ define the pullback operator $\Xi_{\zeta}(\varphi)=\varphi\circ\zeta$. Then:
$$W^{\alpha,p}(\Gamma) = \bigcap_{\zeta\in\mathcal{P}(\Gamma)}\Xi_{\zeta}^{-1}\left(W^{\alpha,p}(\textrm{Domain}(\zeta))\right).$$
The theory developed later in this section establishes that this path-based characterization holds for $\alpha\in\left(0,1+\frac{1}{p}\right)$.

Next, we establish characterizations of the space $W^{\alpha,p}(\Gamma)$  by deriving key estimates on functions $\varphi$ in this space. As is standard in such arguments, constants appearing in inequalities may change from line to line; we do not track these changes explicitly and denote all such constants uniformly by $C$, as the essential point is their independence from the function $\varphi$. The constant $C$ depends only on $\alpha$, $p$, and the metric graph $\Gamma$. 

\begin{theorem}\label{thm:W_characterization_1}
    Let $p>1$ and $\alpha\in\left(0,\frac{1}{p}\right)$. We have $$W^{\alpha,p}(\Gamma)\cong\bigoplus_{e\in\mathcal{E}}W^{\alpha,p}(e).$$
\end{theorem}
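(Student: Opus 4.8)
\section*{Proof proposal}

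The plan is to collapse the asserted isomorphism to a single seminorm estimate and then to a one-dimensional Hardy inequality. First I would record that for $\alpha\in(0,1)$ the graph norm unwinds to
$$\|f\|_{W^{\alpha,p}(\Gamma)}^p = \sum_{e\in\mathcal{E}}\|f\|_{W^{\alpha,p}(e)}^p + |f|_{\alpha,p}^p,$$
since $\floor{\alpha}=0$ forces $D^{2\floor{\floor{\alpha}/2}}f=f$ and $\alpha-\floor{\alpha}=\alpha$. Thus the $\bigoplus_{e\in\mathcal{E}}W^{\alpha,p}(e)$-norm is literally a summand of the graph norm, so the embedding $W^{\alpha,p}(\Gamma)\hookrightarrow\bigoplus_{e\in\mathcal{E}}W^{\alpha,p}(e)$ and its norm bound are immediate. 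The entire content of the theorem is therefore the reverse estimate
$$|f|_{\alpha,p}^p \le C\sum_{e\in\mathcal{E}}\|f\|_{W^{\alpha,p}(e)}^p,$$
which simultaneously yields the set inclusion $\bigoplus_{e\in\mathcal{E}}W^{\alpha,p}(e)\subset W^{\alpha,p}(\Gamma)$ and the missing direction of the norm equivalence.

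To prove this I would expand the Gagliardo double integral over the edge decomposition,
$$|f|_{\alpha,p}^p = \sum_{e,e'\in\mathcal{E}}\int_e\int_{e'}\frac{|f(x)-f(y)|^p}{d(x,y)^{1+\alpha p}}\,dx\,dy,$$
and treat three types of terms. For a diagonal term $e=e'$ (which subsumes loops, with $d$ the circle distance) I split the domain according to whether the geodesic realizing $d(x,y)$ stays inside $e$: on that part the integrand coincides with the edge Gagliardo integrand and is bounded by $|f|_{\alpha,p,e}^p$, while on the complement the geodesic must leave and re-enter $e$, forcing $d(x,y)\ge\delta_0>0$, so this piece is controlled by $\|f\|_{\mathbb{L}_p(e)}^p$. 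For distinct non-adjacent edges the closures are disjoint compacta, hence $d(x,y)\ge\delta_1>0$ and the term is dominated by $\|f\|_{\mathbb{L}_p(e)}^p+\|f\|_{\mathbb{L}_p(e')}^p$.

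The main term, and the crux of the argument, is the cross term for two distinct edges sharing a vertex $v$. Parameterizing both by arc length from $v$, with $g,h$ denoting $f$ along $e,e'$, the only region where $d(x,y)$ degenerates is where the geodesic passes through $v$, where $d(x,y)=s+t$ (elsewhere $d$ is again bounded below). There I would use $|g(s)-h(t)|^p\le 2^{p-1}(|g(s)|^p+|h(t)|^p)$ and integrate out the second variable via $\int_0^{l_{e'}}(s+t)^{-1-\alpha p}\,dt\le (\alpha p)^{-1}s^{-\alpha p}$, to obtain
$$\int_0^{l_{e'}}\int_0^{l_e}\frac{|g(s)-h(t)|^p}{(s+t)^{1+\alpha p}}\,ds\,dt \le \frac{2^{p-1}}{\alpha p}\left(\int_0^{l_e}\frac{|g(s)|^p}{s^{\alpha p}}\,ds + \int_0^{l_{e'}}\frac{|h(t)|^p}{t^{\alpha p}}\,dt\right).$$
It then remains to absorb the weighted one-sided integrals into the edge norms via the fractional Hardy inequality on a bounded interval,
$$\int_0^{L}\frac{|g(s)|^p}{s^{\alpha p}}\,ds \le C\,\|g\|_{W^{\alpha,p}(0,L)}^p,$$
after which summing over the finitely many edges and shared vertices closes the estimate.

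The hard part is exactly this last inequality, and it is where the hypothesis $\alpha<\tfrac1p$ is indispensable: the weight $s^{-\alpha p}$ is locally integrable (so the constant part of $g$ is controlled through $\|g\|_{\mathbb{L}_p}$) and the Hardy inequality itself holds precisely when $\alpha p<1$, reflecting that $W^{\alpha,p}$ functions below the embedding threshold carry no well-defined vertex trace and may jump across $v$ at no cost. I would either invoke the standard bounded-interval fractional Hardy inequality (cf.\ the one-dimensional theory in \citeA{di_nezza}) or prove it by a dyadic decomposition of $(0,L)$ near the origin, telescoping the averages $f_{I_k}$ over $I_k=(2^{-k-1}L,2^{-k}L)$ and bounding consecutive differences $|f_{I_k}-f_{I_{k-1}}|$ by the Gagliardo seminorm, where the resulting geometric series converges exactly because $\alpha p<1$. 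Every other step above survives for $\alpha p\ge 1$, so this single failure is consistent with the isomorphism genuinely breaking down once continuity constraints at vertices become active.
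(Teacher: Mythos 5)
Your proposal is correct, and its skeleton is exactly the paper's: the trivial embedding in one direction, the decomposition of the Gagliardo double integral into diagonal, disjoint, and adjacent edge pairs, and the reduction of the adjacent cross terms via $|g(s)-h(t)|^p\le 2^{p-1}(|g(s)|^p+|h(t)|^p)$ and integration of the kernel to the weighted integrals $\int_0^L|g(s)|^p s^{-\alpha p}\,ds$ (the paper's four functions $\tilde{\varphi}_i$ are precisely your sum over shared vertices, covering pairs of edges meeting at two vertices and the loop configurations). The one genuine divergence is at the crux, the fractional Hardy inequality $\||x|^{-\alpha}g\|_{\mathbb{L}_p(0,L)}\le C\|g\|_{W^{\alpha,p}(0,L)}$: the paper obtains it by extending $g$ to $W^{\alpha,p}(\mathbb{R})$ via \citeA[Theorem~5.4]{di_nezza}, approximating by $C^{\infty}_c(\mathbb{R})$ functions, applying \citeA[Theorem~1.1]{nguyen} to the approximants, and then passing to the limit through a monotone-convergence argument combined with a Sobolev-embedding/H\"older estimate (their \eqref{eq:x_alpha_Lp_1}--\eqref{eq:x_alpha_Lp_2}) to control the error on $(\frac{1}{n},\epsilon)$ --- a delicate step forced by Nguyen's inequality being stated for smooth functions on the line. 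Your dyadic alternative (telescoping the averages over $I_k=(2^{-k-1}L,2^{-k}L)$, bounding consecutive differences by local Gagliardo seminorms, and closing with a discrete weighted Hardy summation whose geometric weights have exponent $1-\alpha p>0$) is self-contained, more elementary, and correctly isolates $\alpha p<1$ as the only point where the hypothesis enters; either route is valid. A small bonus of your write-up: on the diagonal terms you split according to whether the geodesic realizing $d(x,y)$ stays inside $e$, which is more careful than the paper's assertion that this term \emph{is precisely} $|\varphi|_{\alpha,p,e}^p$ --- when the graph geodesic shortcuts through other edges the two integrands differ, but there $d(x,y)$ is bounded below and your $\mathbb{L}_p$ bound absorbs the discrepancy.
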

\begin{proof}
The inclusion $W^{\alpha,p}(\Gamma)\hookrightarrow \bigoplus_{e\in\mathcal{E}}W^{\alpha,p}(e)$ is immediate from the definition of $W^{\alpha,p}(\Gamma)$. For the reverse inclusion, let $\varphi\in\bigoplus_{e\in\mathcal{E}}W^{\alpha,p}(e)$. We only need to show that $|\varphi|^p_{\alpha,p}<C\left(\sum_{e\in\mathcal{E}}|\varphi|^p_{\alpha,p,e} + \|\varphi\|_{\mathbb{L}_p(e)}^p\right)$ for a constant $C$ independent of $\varphi$, as the inequality for the other part of the theorem is trivial. By definition, we have
 \begin{equation}\label{eq:int_int_sum_1}
     \int_\Gamma\int_\Gamma \frac{|\varphi(x)-\varphi(y)|^p}{d(x,y)^{1+\alpha p}}dxdy = \sum_{(e_1,e_2)\in\mathcal{E}^2}\int_{e_1}\int_{e_2} \frac{|\varphi(x)-\varphi(y)|^p}{d(x,y)^{1+\alpha p}}dxdy.
 \end{equation}
If $e_1=e_2$ in the sum then the integral is precisely $|\varphi|_{\alpha,p,e_{1}}^p$. Assume $e_1\not=e_2$. If $e_1\cap e_2=\varnothing$ the quantity $d(x,y)$ in \eqref{eq:int_int_sum_1} is bounded away from zero, and using the inequality $|x+y|^p\leq 2^{p-1}|x|^p+2^{p-1}|y|^p$ we have 
 \begin{equation*}
     \begin{split}
         \int_{e_1}\int_{e_2} \frac{|\varphi(x)-\varphi(y)|^p}{d(x,y)^{1+\alpha p}}dxdy&<C\int_{e_1}|\varphi(x)|^pdx+C\int_{e_2}|\varphi(x)|^pdx\\
         &= C \left(\|\varphi\|_{\mathbb{L}_p(e_1)}^p + \|\varphi\|_{\mathbb{L}_p(e_2)}^p\right).
     \end{split}
 \end{equation*}
 Now suppose $v\in e_1\cap e_2$ for $v\in\mathcal{V}$, let $\zeta_1,\zeta_2$ be natural parameterizations of $e_1$ and $e_2$ that satisfy $\zeta_1(0)=\zeta_2(0)=v$. Let $\epsilon\in(0,\frac{1}{2}\min_{e\in\mathcal{E}}l_e)$ such that 
 \begin{equation}\label{eq:distance_1}
     \begin{split}
         \min\Big\{d(\zeta_1(t),\zeta_2(s)), d(\zeta_1(l_{e_1}-t),\zeta_2(l_{e_2}-s)), &\\d(\zeta_1(l_{e_1}-t),\zeta_2(s)), d(\zeta_1(t),\zeta_2(l_{e_2}-s))\Big\}  &=|t+s|
     \end{split}
 \end{equation}
for every $t,s\in(0,\epsilon)$, and
 \begin{equation}\label{eq:distance_2}
         \inf_{(t,s)\in S}d(\zeta_1(t),\zeta_2(s))>r.
 \end{equation}
for a real number $r>0$, where $$S=(0,l_{e_1})\times(0,l_{e_2})\backslash\left(\Big((0,\epsilon)\cup(l_{e_1}-\epsilon,l_{e_1})\Big)\times \Big((0,\epsilon)\cup(l_{e_2}-\epsilon,l_{e_2})\right)\Big).$$

Let $\zeta:[-l_{e_1},l_{e_2}]\to e_1\cup e_2$ defined by
\begin{equation*}
    \zeta(t) = \begin{cases}
        \zeta_1(-t),\quad t\in[-l_{e_1},0]\\
        \zeta_2(t),\quad t\in[0,l_{e_2}],
    \end{cases}
\end{equation*}
and set $\tilde{\varphi}=\varphi\circ\zeta$. We have
 \begin{equation}\label{eq:integral_1}
     \begin{split}
        &\mathrel{\hphantom{\leq}} \int_{e_1}\int_{e_2} \frac{|\varphi(x)-\varphi(y)|^p}{d(x,y)^{1+\alpha p}}dxdy
        =\int_{-l_{e_1}}^0\int_0^{l_{e_2}} \frac{|\tilde{\varphi}(x)-\tilde{\varphi}(y)|^p}{d(\zeta(x),\zeta(y))^{1+\alpha p}}dxdy\\
        &\leq r^{-1-\alpha p}\iint_S|\tilde{\varphi}(x)-\tilde{\varphi}(y)|^pdxdy + \sum_{i=1}^4\int_{-\epsilon}^{0}\int_0^{\epsilon}\frac{|\tilde{\varphi}_i(x)-\tilde{\varphi}_i(y)|^p}{|x-y|^{1+\alpha p}}dxdy
     \end{split}
 \end{equation}
where the four functions $\tilde{\varphi_i}$ are defined as
\begin{align*}
        \tilde{\varphi}_1(x) &= \tilde{\varphi}(x),&\tilde{\varphi}_2(x) &= \begin{cases}
            \tilde{\varphi}(x),\;\; x\in[0,\epsilon],\\
            \tilde{\varphi}(-l_{e_2}-x),\;\; x\in[-\epsilon,0],\\
        \end{cases}\\
                \tilde{\varphi}_3(x) &= \begin{cases}
            \tilde{\varphi}(l_{e_1}-x),\;\; x\in[0,\epsilon],\\
            \tilde{\varphi}(-l_{e_2}+x),\;\; x\in[-\epsilon,0],\\
        \end{cases}& \tilde{\varphi}_4(x) &= \begin{cases}
            \tilde{\varphi}(l_{e_1}-x),\;\; x\in[0,\epsilon],\\
            \tilde{\varphi}(x),\;\; x\in[-\epsilon,0].\\
        \end{cases}
\end{align*}

The first term appearing in the last line of the estimate \eqref{eq:integral_1} is bounded above by $C\left(\|\varphi\|_{\mathbb{L}_p(e_1)}^p+\|\varphi\|_{\mathbb{L}_p(e_2)}^p\right)$. To handle the sum we use the inequality $|x-y|^p\leq 2^{p-1}|x|^p+2^{p-1}|y|^p$ and Fubini's Theorem as follows
\begin{equation}\label{eq:integral_2}
    \begin{split}
        &\mathrel{\hphantom{\leq}}\int_{-\epsilon}^{0}\int_0^{\epsilon}\frac{|\tilde{\varphi}_i(x)-\tilde{\varphi}_i(y)|^p}{|x-y|^{1+\alpha p}}dxdy\\
        &\leq C\int_0^{\epsilon}\int_{-\epsilon}^0\frac{|\tilde{\varphi}_i(x)|^p}{|x-y|^{1+\alpha p}}dydx + C\int_{-\epsilon}^0\int_0^{\epsilon}\frac{|\tilde{\varphi}_i(y)|^p}{|x-y|^{1+\alpha p}}dxdy\\
        & = C\frac{1}{\alpha}\int_0^{\epsilon}|\tilde{\varphi}_i(x)|^px^{-\alpha p}dx - \frac{C}{\alpha}\int_0^{\epsilon}|\tilde{\varphi}_i(x)|^p(x+\epsilon)^{-\alpha p}dx \\&- \frac{C}{\alpha}\int_{-\epsilon}^0|\tilde{\varphi}_i(y)|^p(\epsilon-y)^{-\alpha p}dy + \frac{C}{\alpha}\int_{-\epsilon}^0|\tilde{\varphi}_i(y)|^p(-y)^{-\alpha p}dy\\
        &\leq\frac{C}{\alpha}\left(\||x|^{-\alpha}\tilde{\varphi}_i\|_{\mathbb{L}_p(0,\epsilon)}^p + \||x|^{-\alpha}\tilde{\varphi}_i\|_{\mathbb{L}_p(-\epsilon,0)}^p\right)\\ &+ \frac{C\epsilon^{-\alpha p}}{\alpha}\left(\|\tilde{\varphi}_i\|_{\mathbb{L}_p(-\epsilon,0)}^p + \|\tilde{\varphi}_i\|_{\mathbb{L}_p(0,\epsilon)}^p\right).
    \end{split}
\end{equation}
 
Therefore, it is enough to show that 
\begin{equation}\label{eq:x_alpha_phi}
\begin{split}
    \||x|^{-\alpha}\tilde{\varphi}_i\|_{\mathbb{L}_p(-\epsilon,0)} &\leq C \|\tilde{\varphi}_i\|_{W^{\alpha,p}(-\epsilon,0)},\\\||x|^{-\alpha}\tilde{\varphi}_i\|_{\mathbb{L}_p(0,\epsilon)} &\leq C \|\tilde{\varphi}_i\|_{W^{\alpha,p}(0,\epsilon)},
\end{split}
\end{equation}
for a constant $C$ independent of $\tilde{\varphi}_i$. By \citeA[Theorem~5.4]{di_nezza} let $\phi_{i,1},\phi_{i,2}\in W^{\alpha,p}(\mathbb{R})$ be extensions of $\tilde{\varphi}_i|_{(0,\epsilon)}$ and $\tilde{\varphi}_i|_{(-\epsilon,0)}$ respectively to $\mathbb{R}$ that satisfy
$$\|\phi_{i,1}\|_{W^{\alpha,p}(\mathbb{R})}\leq C\|\tilde{\varphi}_i\|_{W^{\alpha,p}(-\epsilon,0)},\quad\|\phi_{i,2}\|_{W^{\alpha,p}(\mathbb{R})}\leq C\|\tilde{\varphi}_i\|_{W^{\alpha,p}(0,\epsilon)},$$
for a constant $C$ independent of $\varphi$. By \citeA[Theorem~2.4]{di_nezza} we can find two sequences $\{\psi_n\}$ and $\{\xi_n\}$ of functions in $C^{\infty}_c(\mathbb{R})$ that satisfy $$\|\phi_{i,1}-\psi_n\|_{W^{\alpha,p}(\mathbb{R})}<\frac{1}{n},\text{ and } \|\phi_{i,2}-\xi_n\|_{W^{\alpha,p}(\mathbb{R})}<\frac{1}{n}.$$ Given this construction and using \citeA[Theorem~1.1]{nguyen} we have 
 \begin{equation}\label{eq:psi_W_1}
 \begin{split}
     \||x|^{-\alpha}\psi_n\|_{\mathbb{L}_p(\mathbb{R})}\leq C\|\psi_n\|_{W^{\alpha,p}(\mathbb{R})}
     &\leq \frac{C}{n} + C\|\phi_{i,1}\|_{W^{\alpha,p}(0,\epsilon)}\\
     &\leq \frac{C}{n} + C\|\tilde{\varphi}_i\|_{W^{\alpha,p}(0,\epsilon)}
 \end{split}
 \end{equation}
 and a similar inequality holds for $\xi_n$. We calculate using the monotone convergence theorem
\begin{equation}\label{eq:x_alpha_Lp_1}
    \begin{split}
        &\mathrel{\hphantom{=}}\||x|^{-\alpha}\tilde{\varphi}_i\|_{\mathbb{L}_p(0,\epsilon)} + \||x|^{-\alpha}\tilde{\varphi}_i\|_{\mathbb{L}_p(-\epsilon,0)}\\
        &= \lim_{n\to\infty}\||x|^{-\alpha}\tilde{\varphi}_i\|_{\mathbb{L}_p(\frac{1}{n},\epsilon)} + \lim_{n\to\infty}\||x|^{-\alpha}\tilde{\varphi}_i\|_{\mathbb{L}_p(-\epsilon,-\frac{1}{n})}\\
        &\leq \underset{n\to\infty}{\lim \sup}\||x|^{-\alpha}(\tilde{\varphi}_i-\psi_n)\|_{\mathbb{L}_p(\frac{1}{n},\epsilon)} +\underset{n\to\infty}{\lim \sup} \||x|^{-\alpha}\psi_n\|_{\mathbb{L}_p(\mathbb{R})}\\
        &+ \underset{n\to\infty}{\lim \sup}\||x|^{-\alpha}(\tilde{\varphi}_i-\xi_n)\|_{\mathbb{L}_p(-\epsilon,-\frac{1}{n})} + \underset{n\to\infty}{\lim \sup} \||x|^{-\alpha}\xi_n\|_{\mathbb{L}_p(\mathbb{R})}.
    \end{split}
\end{equation}
By \eqref{eq:psi_W_1}, both terms $\underset{n\to\infty}{\lim \sup} \||x|^{-\alpha}\psi_n\|_{\mathbb{L}_p(\mathbb{R})}$ and $\underset{n\to\infty}{\lim \sup} \||x|^{-\alpha}\xi_n\|_{\mathbb{L}_p(\mathbb{R})}$ are bounded from above by $C(\|\tilde{\varphi}_i\|_{W^{\alpha,p}(0,\epsilon)}+\|\tilde{\varphi}_i\|_{W^{\alpha,p}(-\epsilon,0)})$. For the term $\underset{n\to\infty}{\lim \sup}\||x|^{-\alpha}(\tilde{\varphi}_i-\psi_n)\|_{\mathbb{L}_p(\frac{1}{n},\epsilon)}$, we let $p^*=\frac{p}{1-\alpha p}$ and use \citeA[Theorem~6.7]{di_nezza} along with H\"older's inequality to obtain
 \begin{equation}\label{eq:x_alpha_Lp_2}
     \begin{split}
         \||x|^{-\alpha}(\tilde{\varphi}_i-\psi_n)\|_{\mathbb{L}_p(\frac{1}{n},1)}&\leq \|\tilde{\varphi}_i-\psi_n\|_{\mathbb{L}_{p^*}(0,\epsilon)}^p\||x|^{-1}\|^{\alpha p}_{\mathbb{L}_{1}(\frac{1}{n},\epsilon)}\\
         &\leq C\|\tilde{\varphi}_i-\psi_n\|_{W^{\alpha,p}(0,\epsilon)}^p\||x|^{-1}\|^{\alpha p}_{\mathbb{L}_{1}(\frac{1}{n},\epsilon)}\\
         &\leq C\frac{1}{n^p}|\log(n)|^{\alpha p}.
     \end{split}
 \end{equation}
The last quantity approaches $0$ as $n\to\infty$. Similar calculations hold for $\underset{n\to\infty}{\lim \sup}\||x|^{-\alpha}(\tilde{\varphi}_i-\xi_n)\|_{\mathbb{L}_p(-\epsilon,-\frac{1}{n})}$. This shows \eqref{eq:x_alpha_phi}, and hence yields the desired result.
\end{proof}
Theorem~\ref{thm:W_characterization_1} shows that for low smoothness ($0<\alpha<\frac{1}{p}$), continuity at vertices is not enforced. In contrast, once $\alpha>\frac{1}{p}$ continuity across vertices is enforced as shown next.

\begin{theorem}\label{thm:W_characterization_2}
        Let $p\in(1,\infty)$ and $\alpha\in\left(\frac{1}{p},1\right)$. We have $$W^{\alpha,p}(\Gamma)\cong\bigoplus_{e\in\mathcal{E}}W^{\alpha,p}(e)\cap C(\Gamma).$$
\end{theorem}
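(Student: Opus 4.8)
The plan is to establish the two continuous inclusions separately, reusing the pairwise decomposition of the Gagliardo seminorm from the proof of Theorem~\ref{thm:W_characterization_1} and isolating the new phenomenon: that $\alpha p>1$ now \emph{forces} continuity at the vertices rather than being compatible with discontinuity. The hypothesis $\alpha p>1$ will be used twice --- once to get continuity for free, and once (in the form of a fractional Hardy inequality) to control the cross-vertex integral.

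For the inclusion $W^{\alpha,p}(\Gamma)\hookrightarrow\bigoplus_{e\in\mathcal{E}}W^{\alpha,p}(e)\cap C(\Gamma)$, edge-wise regularity is immediate: restricting the double integral defining $|\varphi|_{\alpha,p}^p$ to the diagonal block $e\times e$ gives $|\varphi|_{\alpha,p,e}^p\le|\varphi|_{\alpha,p}^p<\infty$, so $\varphi|_e\in W^{\alpha,p}(e)$. Since $\alpha p>1$, the one-dimensional fractional Morrey embedding $W^{\alpha,p}(0,l_e)\hookrightarrow C^{0,\alpha-1/p}([0,l_e])$ (cf.\ \citeA[Theorem~8.2]{di_nezza}) provides a representative continuous up to the endpoints, so one-sided traces at each vertex exist. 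The only thing to prove is that these traces agree. Fix a vertex $v$ shared by edges $e_1\ne e_2$, choose natural parameterizations with $\zeta_1(0)=\zeta_2(0)=v$, and set $a_i=\lim_{t\to0^+}\varphi(\zeta_i(t))$. Choosing $\delta>0$ small enough that $d(\zeta_1(t),\zeta_2(s))=t+s$ for $t,s\in(0,\delta)$ (as in \eqref{eq:distance_1}) and that $|\varphi(\zeta_1(t))-\varphi(\zeta_2(s))|\ge\tfrac12|a_1-a_2|$ there, we restrict the $(e_1,e_2)$-block of \eqref{eq:int_int_sum_1} to $x=\zeta_1(t),\,y=\zeta_2(s)$ with $t,s\in(0,\delta)$ and obtain
\begin{equation*}
|\varphi|_{\alpha,p}^p\ge\int_0^\delta\!\!\int_0^\delta\frac{|\varphi(\zeta_1(t))-\varphi(\zeta_2(s))|^p}{(t+s)^{1+\alpha p}}\,dt\,ds\ge\frac{|a_1-a_2|^p}{2^p}\int_0^\delta\!\!\int_0^\delta\frac{dt\,ds}{(t+s)^{1+\alpha p}}.
\end{equation*}
Integrating in $t$ first leaves $\tfrac{1}{\alpha p}\bigl(s^{-\alpha p}-(s+\delta)^{-\alpha p}\bigr)$, which is non-integrable at $s=0$ precisely because $\alpha p>1$; hence the last integral diverges. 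Since $|\varphi|_{\alpha,p}<\infty$, we must have $a_1=a_2$, and as $v$ and $(e_1,e_2)$ were arbitrary this gives $\varphi\in C(\Gamma)$.

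For the reverse inclusion, let $\varphi\in\bigoplus_{e\in\mathcal{E}}W^{\alpha,p}(e)\cap C(\Gamma)$ and decompose $|\varphi|_{\alpha,p}^p$ over pairs $(e_1,e_2)\in\mathcal{E}^2$ as in \eqref{eq:int_int_sum_1}. Diagonal terms contribute $\sum_e|\varphi|_{\alpha,p,e}^p$, and pairs with $e_1\cap e_2=\varnothing$ have $d(x,y)$ bounded below, so they are controlled by $\mathbb{L}_p$ norms exactly as in the proof of Theorem~\ref{thm:W_characterization_1}. The only genuinely new contribution is from adjacent edges sharing a vertex $v$. Gluing $e_1$ and $e_2$ through $v$ into the single interval parameterization $\zeta$ and setting $\tilde\varphi=\varphi\circ\zeta$ as before, continuity of $\varphi$ at $v$ means $\tilde\varphi$ is continuous at $0$, with a well-defined value $\tilde\varphi(0)$. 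As in \eqref{eq:integral_1}, the cross-vertex part reduces to $\int_{-\epsilon}^0\int_0^\epsilon|\tilde\varphi(x)-\tilde\varphi(y)|^p|x-y|^{-1-\alpha p}\,dx\,dy$ plus far-field terms bounded by $\mathbb{L}_p$ norms. Writing $g=\tilde\varphi-\tilde\varphi(0)$, which vanishes at $0$, using $|\tilde\varphi(x)-\tilde\varphi(y)|^p\le 2^{p-1}(|g(x)|^p+|g(y)|^p)$ and $|x-y|\ge\max\{|x|,|y|\}$ for $x>0>y$, matters reduce to the one-sided weighted estimate $\int_0^\epsilon|g(x)|^p|x|^{-\alpha p}\,dx$ and its mirror on $(-\epsilon,0)$.

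The main obstacle, and the second place where $\alpha p>1$ is essential, is controlling this weighted integral. Unlike the regime $\alpha<1/p$ of Theorem~\ref{thm:W_characterization_1}, here $|x|^{-\alpha}\notin\mathbb{L}_p$ near $0$, so one cannot bound $\||x|^{-\alpha}\tilde\varphi\|_{\mathbb{L}_p}$ directly --- subtracting the trace $\tilde\varphi(0)$ first is unavoidable. The required bound $\int_0^\epsilon|g(x)|^p|x|^{-\alpha p}\,dx\le C\|g\|_{W^{\alpha,p}(0,\epsilon)}^p$ for $g$ with $g(0)=0$ is the fractional Hardy inequality, valid precisely for $\alpha p>1$. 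Combined with $\|g\|_{W^{\alpha,p}(0,\epsilon)}\le\|\tilde\varphi\|_{W^{\alpha,p}(0,\epsilon)}+C|\tilde\varphi(0)|\le C\|\varphi\|_{\bigoplus_e W^{\alpha,p}(e)}$, where $|\tilde\varphi(0)|$ is controlled through the continuous embedding $W^{\alpha,p}(e)\hookrightarrow C^0(e)$, this closes the adjacent-edge estimate. Summing the finitely many contributions yields $|\varphi|_{\alpha,p}\le C\|\varphi\|_{\bigoplus_e W^{\alpha,p}(e)}$, hence the reverse continuous embedding, completing the isomorphism.
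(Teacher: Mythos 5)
Your proposal is correct and follows essentially the same route as the paper: the forward inclusion via edge-wise restriction of the seminorm plus the divergence of $\iint (t+s)^{-1-\alpha p}\,dt\,ds$ forcing the one-sided traces to agree, and the reverse inclusion via the pairwise decomposition of Theorem~\ref{thm:W_characterization_1}, reduced after subtracting the vertex value to the weighted bound $\||x|^{-\alpha}(\tilde{\varphi}-\tilde{\varphi}(0))\|_{\mathbb{L}_p}$ controlled by the fractional Hardy inequality in the regime $\alpha p>1$. The only difference is presentational: you invoke the Hardy inequality for trace-zero functions as a known result, whereas the paper derives it from the second part of \citeA[Theorem~1.1]{nguyen} through the cutoff $\phi_{\delta}$ and a smooth-approximation argument, and your implicit corner-by-corner treatment of shared vertices corresponds to the paper's sets $S_1,\dots,S_4$ handling parallel edges and loops.
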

\begin{proof}
We first show the inclusion $\bigoplus_{e\in\mathcal{E}}W^{\alpha,p}(e)\cap C(\Gamma)\hookrightarrow W^{\alpha,p}(\Gamma)$. We follow the steps of the proof of Theorem \ref{thm:W_characterization_1}, thus we must bound integrals of the form in \eqref{eq:integral_1}. Let $\varphi\in \bigoplus_{e\in\mathcal{E}}W^{\alpha,p}(e)\cap C(\Gamma)$, Let $\zeta$ be as in the proof of Theorem \ref{thm:W_characterization_1} and let $\tilde{\varphi}=\varphi\circ\zeta$. Define the sets
\begin{equation*}
    \begin{split}
        S_1 &= (-\epsilon,0)\times(0,\epsilon);\\
        S_2 &= \begin{cases}
    (-\epsilon,0)\times (l_{e_2}-\epsilon,l_{e_2}),\;if\; \zeta(0) = \zeta(l_{e_2}), \\
    \varnothing, \; otherwise;
\end{cases}\\
        S_3 &= \begin{cases}
    (-l_{e_1},-l_{e_1}+\epsilon)\times (l_{e_2}-\epsilon,l_{e_2}),\;if\; \zeta(l_{e_2}) = \zeta(-l_{e_1}), \\
    \varnothing, \; otherwise;
\end{cases}\\
    S_4 &= \begin{cases}
    (-l_{e_1},-l_{e_1}+\epsilon)\times (0,\epsilon),\;if\; \zeta(0) = \zeta(l_{e_1}), \\
    \varnothing, \; otherwise;
\end{cases}
    \end{split}
\end{equation*}
and $S = (-l_{e_1},0)\times(0,l_{e_2})\backslash \bigcup_{i=1}^4 S_i.$
We have that $$\inf_{(s,t)\in S}d(\zeta(s),\zeta(t))> r >0$$ for a positive real number $r$. Define $\tilde{\varphi}_i$ as in the proof of Theorem \ref{thm:W_characterization_1}. Similarly to \eqref{eq:integral_1} we have
 \begin{equation*}
     \begin{split}
        &\mathrel{\hphantom{\leq}} \int_{e_1}\int_{e_2} \frac{|\varphi(x)-\varphi(y)|^p}{d(x,y)^{1+\alpha p}}dxdy\\
        &\leq r^{-1-\alpha p}\iint_S|\tilde{\varphi}(x)-\tilde{\varphi}(y)|^pdxdy + \sum_{\substack{i\in\{1,2,3,4\}\\S_i\not=\varnothing}}^4\int_{-\epsilon}^{0}\int_0^{\epsilon}\frac{|\tilde{\varphi}_i(x)-\tilde{\varphi}_i(y)|^p}{|x-y|^{1+\alpha p}}dxdy.
     \end{split}
 \end{equation*}
The first term is bounded by $C\left(\|\varphi\|_{\mathbb{L}_p(e_1)}^p+\|\varphi\|_{\mathbb{L}_p(e_2)}^p\right)$. For each $i$ that satisfies $S_i\not=\varnothing$, we argue as in \eqref{eq:integral_2}:
\begin{equation*}
\begin{split}
    &\mathrel{\hphantom{\leq}}\int_{-\epsilon}^{0}\int_0^{\epsilon}\frac{|\tilde{\varphi}_i(x)-\tilde{\varphi}_i(y)|^p}{|x-y|^{1+\alpha p}}dxd\\
    &\leq C \int_{-\epsilon}^{0}\int_0^{\epsilon}\frac{|\tilde{\varphi}_i(x)-\tilde{\varphi}_i(0)|^p}{|x-y|^{1+\alpha p}}dxd + C \int_{-\epsilon}^{0}\int_0^{\epsilon}\frac{|\tilde{\varphi}_i(0)-\tilde{\varphi}_i(y)|^p}{|x-y|^{1+\alpha p}}dxd\\
    &\leq\frac{C}{\alpha}\||x|^{-\alpha}(\tilde{\varphi}_i-\tilde{\varphi}_i(0))\|_{\mathbb{L}_p(-\epsilon,\epsilon)}^p + \frac{C}{\alpha}\|\tilde{\varphi}_i-\tilde{\varphi}_i(0)\|_{\mathbb{L}_p(-\epsilon,\epsilon)}^p.
\end{split}
\end{equation*}
Let $\phi:\mathbb{R}\to\mathbb{R}$ be a smooth function that satisfies $\phi([-\frac{1}{2},\frac{1}{2}])=\{0\}$ and $\phi(\mathbb{R}\backslash(-1,1))=\{1\}$. For every $\delta>0$ define $\phi_{\delta}$ by $\phi_{\delta}(x) = \phi(\frac{x}{\delta})$. Define $\bar{\varphi}_{\delta} = (\tilde{\varphi}_i-\tilde{\varphi}_i(0))\phi_\delta$, the function $\bar{\varphi}_\delta$ converges to $\tilde{\varphi}_i$ everywhere as $\delta\to0^+$. Moreover, because $\tilde{\varphi}_i$ is continuous hence bounded the dominated convergence theorem implies that $\bar{\varphi}_\delta$ converges to $\tilde{\varphi}_i$ in $\mathbb{L}_p(-\epsilon,\epsilon)$. Dominated convergence further imply the convergence of $\bar{\varphi}_\delta$ to $\tilde{\varphi}_i$ in $W^{\alpha,p}(0,\epsilon)$ and in $W^{\alpha,p}(-\epsilon,0)$. This follows because the function $$\left(|\tilde{\varphi}_i(y)-\tilde{\varphi}_i(x)-\bar{\varphi}_\delta(x)+\bar{\varphi}_\delta(y)|^p\right)/|x-y|^{1+\alpha p}$$ is dominated by the integrable function $C\left(|\tilde{\varphi}_i(y)-\tilde{\varphi}_i(x)|^p\right)/|x-y|^{1+\alpha p}$. For each $n\in\mathbb{N}$, choose $\delta_n>0$ such that $$\|\tilde{\varphi}_i-\tilde{\varphi}_i(0)-\bar{\varphi}_{\delta_n}\|_{W^{\alpha,p}(-\epsilon,0)}+\|\tilde{\varphi}_i-\tilde{\varphi}_i(0)-\bar{\varphi}_{\delta_n}\|_{W^{\alpha,p}(0,\epsilon)}<\frac{1}{n}.$$ As in the proof of Theorem \ref{thm:W_characterization_1} let $\chi_n$ be an extension of $\bar{\varphi}_{\delta_n}$ to $\mathbb{R}$ that satisfies $$\|\chi_n\|_{W^{\alpha,p}(\mathbb{R})}\leq C\min\{\|\bar{\varphi}_{\delta_n}\|_{W^{\alpha,p}(0,\epsilon)},\|\bar{\varphi}_{\delta_n}\|_{W^{\alpha,p}(-\epsilon,0)}\}$$ for a constant $C$ independent of $\bar{\varphi}_{\delta_n}$. Let $\{\psi_n\}$ be
a $C^{\infty}_c(\mathbb{R}\backslash\{0\})$ that satisfies
$$\|\chi_n - \psi_n\|_{W^{\alpha,p}(\mathbb{R})}\leq \frac{1}{n}.$$
Following calculations similar to \eqref{eq:x_alpha_Lp_1} and \eqref{eq:x_alpha_Lp_2} and invoking the second part of \citeA[Theorem~1.1]{nguyen} establishes the desired inclusion.\\
For the inclusion $W^{\alpha,p}(\Gamma)\hookrightarrow \bigoplus_{e\in\mathcal{E}}W^{\alpha,p}(e)\cap C(\Gamma)$ it suffices to show that $W^{\alpha,p}(\Gamma)\subset C(\Gamma)$. By \citeA[Theorem~8.2]{di_nezza} we have $W^{\alpha,p}(e)\hookrightarrow C(e)$ for every $e\in\mathcal{E}$. It remains to verify that the functions of $W^{\alpha,p}(\Gamma)$ are continuous at the vertices of $\Gamma$. Let $\varphi\in W^{\alpha,p}(\Gamma)$ and let $e_1,e_2\in\mathcal{E}$ be two distinct edges such that $v\in e_1\cap e_2$. Parameterize $e_1\cup e_1$ by the interval $[-l_{e_1},l_{e_2}]$ so that $0$ corresponds to $v$ as done above. Let $a=\lim_{x\to0^+}\tilde{\varphi}(x)$ and $b=\lim_{x\to0^-}\tilde{\varphi}(x)$ and suppose, seeking a contradiction, that $a\not=b$. By continuity of $\varphi$ on $e_1$ and $e_2$ there exists $\epsilon>0$ that satisfies $\sup_{0<x<\epsilon}|\tilde{\varphi}(x)-a| <\frac{|b-a|}{4}$ and $\sup_{-\epsilon<x<0}|\tilde{\varphi}(x)-b| <\frac{|b-a|}{4}$, hence $|\tilde{\varphi}(x)-\tilde{\varphi}(y)|>\frac{|b-a|}{2}$ for every $(x,y)\in (-\epsilon,0)\times(0,\epsilon)$, consequently
\begin{equation*}
    \int_{-l_{e_1}}^0\int_0^{l_{e_2}}\frac{|\tilde{\varphi}(x)-\tilde{\varphi}(y)|^p}{|x-y|^{1+\alpha p}}\geq \left(\frac{|b-a|}{2}\right)^p\int_{-\epsilon}^0\int_0^{\epsilon}\frac{1}{|x-y|^{1+\alpha p}} = \infty
\end{equation*}
which contradicts that $\varphi\in W^{\alpha,p}(\Gamma)$, therefore $a=b$ and $\varphi$ is continuous.
\end{proof}
Next, we extend the characterizations in Theorems~\ref{thm:W_characterization_1} and~\ref{thm:W_characterization_2} to larger values of $\alpha$. The following general characterization reveals an alternating regularity pattern: even-order derivatives up to a critical order must be continuous across vertices, while odd-order derivatives remain unrestricted.

\begin{theorem}\label{thm:W_chatacterization_3}
Let $p> 1$ and $\alpha\in(0,\infty)\backslash\{2k+\frac{1}{p}: k\in\mathbb{N}\}$. We have
        \begin{equation}\label{eq:W_char_main_eq}
            W^{\alpha,p}(\Gamma)\cong \bigoplus_{e\in\mathcal{E}}W^{\alpha,p}(e) \cap C^{\left(2\floor{\frac{\alpha}{2}-\frac{1}{2p}}\right)}(\Gamma)
        \end{equation}
        where we use the notation $C^{(-2)}(\Gamma)=\mathbb{L}_2(\Gamma)$. 
\end{theorem}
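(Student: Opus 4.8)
The plan is to prove the general characterization in Theorem~\ref{thm:W_chatacterization_3} by induction on $\floor{\alpha}$, reducing the statement at order $\alpha$ to the already-established base cases (Theorems~\ref{thm:W_characterization_1} and~\ref{thm:W_characterization_2}) together with the fractional-order cases for $\alpha\in(0,1)$. First I would identify the critical order appearing in \eqref{eq:W_char_main_eq}: the quantity $m:=2\floor{\frac{\alpha}{2}-\frac{1}{2p}}$ is precisely the largest even integer $2j$ such that $2j+\frac{1}{p}<\alpha$, i.e.\ the highest even-order derivative whose continuity at vertices is forced. This matches the regularity threshold $\frac1p$ seen in the base cases: continuity of $D^{2j}\varphi$ is enforced exactly when the remaining smoothness $\alpha-2j$ exceeds $\frac1p$. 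The excluded set $\{2k+\frac1p:k\in\mathbb{N}\}$ is exactly the borderline where the relevant even derivative sits at the critical Sobolev exponent, so that $C$ versus $\mathbb{L}$ behavior is ambiguous; excluding it lets us always land strictly on one side.

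The key reduction is the identity $W^{\alpha,p}(\Gamma)\cong W^{2,p}(\Gamma)\cap\{\varphi: D^2\varphi\in W^{\alpha-2,p}(\Gamma)\}$ for $\alpha>2$, which mirrors the edge-wise definition of $W^{\alpha,p}(e)$ via $D^{\floor{\alpha}}\varphi\in W^{\alpha-\floor{\alpha},p}(e)$ but is adapted to the even-derivative structure through the reparameterization-invariant operator $D^2=\Delta$. The plan is to peel off two orders of smoothness at a time: if $\varphi\in\bigoplus_{e}W^{\alpha,p}(e)$ and $D^2\varphi$ satisfies the characterization at order $\alpha-2$ (so its even derivatives up to the appropriate threshold are continuous), then since $D^{2j}\varphi=D^{2(j-1)}(D^2\varphi)$, the continuity of $D^{2j}\varphi$ for $j\ge1$ follows directly from the inductive hypothesis applied to $D^2\varphi$. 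The only genuinely new condition at each step is the continuity of $\varphi$ itself (the $j=0$ case), which is handled exactly as in Theorem~\ref{thm:W_characterization_2} when $\alpha>\frac1p$, or not required when $\alpha<\frac1p$ as in Theorem~\ref{thm:W_characterization_1}. I would need to verify that $D^2$ maps $W^{\alpha,p}(\Gamma)$ into $W^{\alpha-2,p}(\Gamma)$ boundedly and that the two-sided norm estimates compose; this uses that the $\mathbb{L}_p$ norms of the derivatives telescope and that the Slobodeckij seminorm term $|D^{2\floor{\floor{\alpha}/2}}f|_{\alpha-\floor{\alpha},p}$ in the definition of $\|\cdot\|_{W^{\alpha,p}(\Gamma)}$ is precisely the top-order seminorm of $D^2\varphi$ shifted down by two orders.

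The main obstacle I anticipate is the careful bookkeeping across the parity of $\floor{\alpha}$ and the interaction with the fractional remainder $\alpha-\floor{\alpha}$, since the Slobodeckij seminorm in Definition~\ref{def:W_Gamma_1} is only imposed on the top even-order derivative $D^{2\floor{\floor{\alpha}/2}}\varphi$ when $\floor{\alpha}$ is even. When $\floor{\alpha}$ is odd, the seminorm lives on an odd-order derivative edge-wise and no vertex gluing is imposed at the top order, so I must check that the inductive peeling still terminates correctly at the base cases and that $m=2\floor{\frac{\alpha}{2}-\frac{1}{2p}}$ comes out right in both parities. Concretely, I would treat the reduction $\alpha\mapsto\alpha-2$ until the residual order lands in $(0,1)\cup(1,2)$, invoke Theorem~\ref{thm:W_characterization_1} or~\ref{thm:W_characterization_2} (and the analogous $\alpha\in(1,2)$ computation, which reduces to the $(0,1)$ case applied to $D\varphi$ edge-wise with no vertex condition since the first derivative is odd-order) as the base, and then propagate continuity upward. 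The convention $C^{(-2)}(\Gamma)=\mathbb{L}_2(\Gamma)$ is the natural terminal value when $\alpha<\frac1p$ so that no continuity is enforced at all, consistent with Theorem~\ref{thm:W_characterization_1}. Throughout, the equivalence of norms is tracked by the same constant-$C$ convention used earlier, with all constants depending only on $\alpha$, $p$, and $\Gamma$.
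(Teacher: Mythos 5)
Your proposal is correct, and it follows the same overall strategy as the paper: an induction advancing in steps of two, anchored in the base cases of Theorems~\ref{thm:W_characterization_1} and~\ref{thm:W_characterization_2}, with the parity bookkeeping resolved exactly as you describe (your identification of $2\floor{\frac{\alpha}{2}-\frac{1}{2p}}$ as the largest even $2j$ with $2j+\frac{1}{p}<\alpha$ is the right invariant, and your reading of the convention $C^{(-2)}(\Gamma)=\mathbb{L}_2(\Gamma)$ matches the paper). The one genuine difference is the pivot of the induction. You peel from the bottom via the reduction $W^{\alpha,p}(\Gamma)\cong\{\varphi\in W^{2,p}(\Gamma): D^2\varphi\in W^{\alpha-2,p}(\Gamma)\}$ and then apply the inductive hypothesis to $D^2\varphi$; since this identity is \emph{not} Definition~\ref{def:W_Gamma_1} (which peels all $\floor{\alpha}$ orders at once and imposes the fractional condition $D^{\floor{\alpha}}\varphi\in W^{\alpha-\floor{\alpha},p}(\Gamma)$ only when $\floor{\alpha}$ is even), you must prove it, which requires the edge-wise lifting fact that $\varphi\in W^{2,p}(e)$ together with $D^2\varphi\in W^{\alpha-2,p}(e)$ implies $\varphi\in W^{\alpha,p}(e)$ — routine on an interval, but it should be stated, because Definition~\ref{def:W_Gamma_1} demands $\varphi\in\bigoplus_{e\in\mathcal{E}}W^{\alpha,p}(e)$ directly and your reduction must recover it. The paper instead peels from the top: membership in $W^{\floor{\alpha},p}(\Gamma)$ is an integer case known directly from the definition, and the only analytic input is applying Theorems~\ref{thm:W_characterization_1} and~\ref{thm:W_characterization_2} to the fractional remainder $D^{\floor{\alpha}}\varphi\in W^{\alpha-\floor{\alpha},p}(\Gamma)$, together with the boundedness of $D:W^{\alpha,p}(e)\to W^{\alpha-1,p}(e)$; its induction over the windows $\big(2n+\frac{1}{p},2n+2+\frac{1}{p}\big)$ is thus largely organizational. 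What each route buys: the paper's version is shorter because the two-step descent is already baked into the definition; yours makes the induction substantive, localizes all vertex-continuity arguments in a single reparameterization-invariant operator $D^2=\Delta$, and explains conceptually why the continuity threshold advances in even steps — at the cost of one extra (easy) lemma and the norm-composition check you correctly flag, where your observation that the top Slobodeckij seminorm of $\varphi$ at order $\alpha$ equals that of $D^2\varphi$ at order $\alpha-2$ (since $2\floor{\floor{\alpha-2}/2}+2=2\floor{\floor{\alpha}/2}$) is exactly what makes the estimates telescope.
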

\begin{proof}
    For $\alpha\in(0,1)$ the result follows from Theorem~\ref{thm:W_characterization_1} and Theorem~\ref{thm:W_characterization_2}. For $\alpha\in\mathbb{N}\cup[1,2)$ the definition of $W^{\alpha,p}(\Gamma)$ is precisely \eqref{eq:W_char_main_eq}. For $\alpha\in\left[2,2+\frac{1}{p}\right)$, building upon the previously mentioned results, Definition~\ref{def:W_Gamma_1}, and the fact that $D:W^{\alpha,p}(e)\to W^{\alpha-1,p}(e)$ is a bounded operator (\citeA[Proposition~2.1]{di_nezza}) we have that $\varphi\in W^{\alpha,p}(\Gamma)$ if and only if 
    \begin{equation*}
        \begin{split}
            \varphi&\in\bigoplus_{e\in\mathcal{E}}W^{\alpha,p}(e)\cap W^{2,p}(\Gamma)\cong \bigoplus_{e\in\mathcal{E}}W^{\alpha,p}(e)\cap C(\Gamma),\\
            D^2\varphi &\in W^{\alpha-2,p}(\Gamma)\cong \bigoplus_{e\in\mathcal{E}}W^{\alpha-2,p}(e).
        \end{split}
    \end{equation*}
    Combining this with 
    $$\|\varphi\|_{W^{\alpha,p}(\Gamma)}^p = \|\varphi\|_{W^{\floor{\alpha},p}(\Gamma)}^p + \|D^{\alpha}\varphi\|_{W^{\alpha-\floor{\alpha}}(\Gamma)}^p,$$
    proves \eqref{eq:W_char_main_eq} for this case.
    We proceed inductively on $n\in\mathbb{N}_0$ by showing that \eqref{eq:W_char_main_eq} holds for every $\alpha\in\left(2n+\frac{1}{p},2n+2+\frac{1}{p}\right)$.
    Supposing that \eqref{eq:W_char_main_eq} holds for every $\alpha\in\left(2n+\frac{1}{p},2n+2+\frac{1}{p}\right)$, we will show that it holds for every $\alpha\in\left(2n+2+\frac{1}{p},2n+4+\frac{1}{p}\right)$ by considering several cases. If $\alpha\in\left(2n+2+\frac{1}{p},2n+3\right)$ then by Definition~\ref{def:W_Gamma_1} and the fact that \eqref{eq:W_char_main_eq} holds for integers we have that $\varphi\in W^{\alpha,p}(\Gamma)$ if and only if
    \begin{equation*}
        \begin{split}
            \varphi&\in\bigoplus_{e\in\mathcal{E}}W^{\alpha,p}(e)\cap W^{2n+2,p}(\Gamma)\cong\bigoplus_{e\in\mathcal{E}}W^{\alpha,p}(e)\cap C^{(2n)}(\Gamma),
        \end{split}
    \end{equation*}
    and $D^{2n+2}\varphi\in W^{\alpha-\floor{\alpha},p}(\Gamma)$. Because $\alpha-\floor{\alpha} \in\left(\frac{1}{p},1\right)$ we apply Theorem~\ref{thm:W_characterization_2} and \citeA[Proposition~2.1]{di_nezza}, to find that $D^{2n+2}\varphi$ is continuous, hence $$W^{\alpha,p}(\Gamma)\cong \bigoplus_{e\in\mathcal{E}}W^{\alpha,p}(e)\cap C^{(2n+2)}(\Gamma).$$ If $\alpha\in[2n+3,2n+4)$, then immediately from Definition~\ref{def:W_Gamma_1} we have that $W^{\alpha,p}(\Gamma) = \bigoplus_{e\in\mathcal{E}}W^{\alpha,p}(e)\cap C^{(2n+2)}(\Gamma)$. If $\alpha\in\left[2n+3,2n+4+\frac{1}{p}\right)$, then again employing that $\eqref{eq:W_char_main_eq}$ holds for integers we have that $\varphi\in W^{\alpha,p}(\Gamma)$ if and only if
    $$\varphi\in\bigoplus_{e\in\mathcal{E}}W^{\alpha,p}(e)\cap C^{(2n+2)}(\Gamma) \textrm{ for which } D^{2n+2}\varphi\in W^{\alpha-\floor{\alpha},p}(\Gamma),$$
    because $\alpha-\floor{\alpha}\in\left(0,\frac{1}{p}\right)$. Then Theorem \ref{thm:W_characterization_2} and \citeA[Proposition~2.1]{di_nezza} yields 
    $$W^{\alpha,p}(\Gamma)\cong \bigoplus_{e\in\mathcal{E}}W^{\alpha,p}(e)\cap C^{(2n+2)}(\Gamma).$$
    This finishes the induction and proves the theorem.
\end{proof}

Next, we extend the classical Sobolev embedding theorems to the metric graph setting.

\begin{theorem}\label{thm:sobolev_subspace}
    Let $p\in(1,\infty)$ and $0<\alpha<\beta<\infty$. Then $W^{\beta,p}(\Gamma)\hookrightarrow W^{\alpha,p}(\Gamma)$.
\end{theorem}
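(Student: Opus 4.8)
The plan is to reduce the embedding to its two structural ingredients—edge-wise fractional Sobolev embedding and preservation of the vertex continuity constraints—and then to control the extra global Slobodeckij seminorm appearing in the $W^{\alpha,p}(\Gamma)$ norm. First I would record the edge-wise embedding: for a bounded interval the classical inclusion $W^{\beta,p}(0,l_e)\hookrightarrow W^{\alpha,p}(0,l_e)$ holds for every $0<\alpha<\beta$ (the fractional part is classical, and one descends in integer order using that $D:W^{s,p}(e)\to W^{s-1,p}(e)$ is bounded, \citeA[Proposition~2.1]{di_nezza}). Pushing this forward edge-by-edge gives $\sum_{e}\|\varphi\|_{W^{\alpha,p}(e)}^p\le C\sum_{e}\|\varphi\|_{W^{\beta,p}(e)}^p\le C\|\varphi\|_{W^{\beta,p}(\Gamma)}^p$, which already controls the first summand of $\|\varphi\|_{W^{\alpha,p}(\Gamma)}^p$.

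Second, I would verify that the vertex constraints defining $W^{\alpha,p}(\Gamma)$ are implied by those defining $W^{\beta,p}(\Gamma)$. Writing $k_\gamma:=2\floor{\frac{\gamma}{2}-\frac{1}{2p}}$, Theorem~\ref{thm:W_chatacterization_3} identifies $W^{\gamma,p}(\Gamma)$ with $\bigoplus_{e}W^{\gamma,p}(e)\cap C^{(k_\gamma)}(\Gamma)$, so membership forces $D^{2j}\varphi\in C(\Gamma)$ for all $j\le\floor{\frac{\gamma}{2}-\frac{1}{2p}}$. Since $\alpha<\beta$ yields $k_\alpha\le k_\beta$, every continuity requirement for $\alpha$ is met by $\varphi\in W^{\beta,p}(\Gamma)$, so $C^{(k_\beta)}(\Gamma)\subset C^{(k_\alpha)}(\Gamma)$ and the set inclusion $W^{\beta,p}(\Gamma)\subset W^{\alpha,p}(\Gamma)$ follows; it remains only to finish the norm bound.

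Third—the main work—I would bound the remaining term $|D^{m}\varphi|_{\alpha-\floor\alpha,p}$, present when $\alpha\notin\mathbb{N}$, where $m=2\floor{\floor\alpha/2}$ is even and $m\le\floor\alpha$. Setting $g=D^{m}\varphi$ and $\gamma=\alpha-\floor\alpha\in(0,1)$, the edge-wise embedding gives $g\in\bigoplus_{e}W^{\beta-m,p}(e)$ with $\beta-m>\gamma$, so $g$ is edge-wise regular with room to spare. I would then invoke the gluing estimate established inside the proofs of Theorems~\ref{thm:W_characterization_1} and~\ref{thm:W_characterization_2}: for $g$ that is edge-wise $W^{\gamma,p}$ and, when $\gamma>1/p$, continuous across vertices, one has $|g|_{\gamma,p}^p\le C\big(\sum_{e}|g|_{\gamma,p,e}^p+\|g\|_{\mathbb{L}_p(\Gamma)}^p\big)\le C\|\varphi\|_{W^{\beta,p}(\Gamma)}^p$. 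The needed continuity of $g=D^{m}\varphi$ in the supercritical range $\gamma>1/p$ is exactly the content of the second step: there $m=2\floor{\frac{\alpha}{2}-\frac{1}{2p}}=k_\alpha\le k_\beta$, so $D^{m}\varphi\in C(\Gamma)$; the subcritical range $\gamma<1/p$ requires no continuity and is covered by the estimate of Theorem~\ref{thm:W_characterization_1}.

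The hard part will be the critical thresholds $\alpha\in\{2k+\frac1p:k\in\mathbb{N}\}$ excluded from Theorem~\ref{thm:W_chatacterization_3}, where $\gamma=1/p$ exactly and the Hardy-type inequality driving the proof of Theorem~\ref{thm:W_characterization_2} degenerates. Here I would exploit the strict gain $\beta>\alpha$: edge-wise $g=D^{m}\varphi\in W^{\beta-m,p}(e)$ with $\beta-m>1/p$, so by Morrey's embedding $g$ is edge-wise $\lambda$-H\"older with $\lambda=\min\{\beta-m-\frac1p,1\}>0$, and, being continuous across vertices, it is globally H\"older on $\Gamma$ by the shortest-path argument of \eqref{eq:lip_proof}. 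Near a vertex $v$ where two edges meet, splitting $g(x)-g(y)=(g(x)-g(v))-(g(y)-g(v))$ and using $|g(x)-g(v)|\le C\,d(x,v)^{\lambda}$ together with $d(x,y)=x+y$ from \eqref{eq:distance_1} reduces the cross-edge contribution to $|g|_{1/p,p}$ to an integral of the form $\int_0^\epsilon x^{\lambda p-1}\,dx<\infty$, after integrating the singular kernel $(x+y)^{-2}$ in the far variable. This replaces the failed critical Hardy estimate and closes the bound at the exceptional exponents. Combining the three estimates yields $\|\varphi\|_{W^{\alpha,p}(\Gamma)}\le C\|\varphi\|_{W^{\beta,p}(\Gamma)}$, i.e.\ the claimed continuous embedding.
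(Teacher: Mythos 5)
Your proposal is correct, but it follows a genuinely different route from the paper's proof, most visibly at the critical exponents. The paper argues by cases: when $\floor{\alpha}=\floor{\beta}$ (its Cases~1 and~4) it compares the global Slobodeckij kernels directly, splitting $\Gamma\times\Gamma$ into $\{d(x,y)\leq 1\}$, where $d(x,y)^{-1-\alpha p}\leq d(x,y)^{-1-\beta p}$, and its complement, where the kernel is bounded and only $\mathbb{L}_p$ norms enter --- an elementary estimate that never touches the vertex structure and, crucially, has no exceptional exponents within a unit smoothness window; it then disposes of the critical value $\alpha=\frac{1}{p}$ (and, via its Case~5, $\alpha=2k+\frac{1}{p}$) by factoring the embedding through an intermediate space, $W^{1,p}(\Gamma)\hookrightarrow W^{\frac{p+1}{2p},p}(\Gamma)\hookrightarrow W^{\frac{1}{p},p}(\Gamma)$, so that no new analysis is needed at the threshold. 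You instead reuse the quantitative gluing estimates from inside the proofs of Theorems~\ref{thm:W_characterization_1} and~\ref{thm:W_characterization_2} to control $|D^{m}\varphi|_{\gamma,p}$, and at $\gamma=\frac{1}{p}$ you replace the degenerate Hardy-type bound with a direct computation: Morrey's embedding on each edge gives $|g(x)-g(v)|\leq C\,d(x,v)^{\lambda}$ with $\lambda>0$ precisely because of the strict gain $\beta>\alpha$, and the cross-edge integral against the kernel $(x+y)^{-2}$ then reduces to $\int_0^{\epsilon}x^{\lambda p-1}\,dx<\infty$, which checks out. This is sound but heavier machinery than the paper's composition trick, which buys the critical case for free from the subcritical one; conversely, your argument makes explicit why the threshold is harmless, which the paper's factoring hides. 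Two small imprecisions are worth fixing. First, your Step~2 applies Theorem~\ref{thm:W_chatacterization_3} at $\beta$, which is not available verbatim when $\beta$ itself is a critical exponent $2k+\frac{1}{p}$; there you should read off the continuity of the even-order derivatives directly from Definition~\ref{def:W_Gamma_1} (membership in $W^{\floor{\beta},p}(\Gamma)$, together with Theorem~\ref{thm:W_characterization_2} applied to $D^{\floor{\beta}}\varphi$ when $\floor{\beta}$ is even). Second, the exceptional set for your seminorm argument is really $\{j+\frac{1}{p}:j\in\mathbb{N}_0\}$ with both parities of $j$, since $\gamma=\alpha-\floor{\alpha}=\frac{1}{p}$ also occurs for odd $\floor{\alpha}$, where the norm's global term involves $m=\floor{\alpha}-1$; fortunately your critical-case argument applies verbatim there (the continuity of $D^{m}\varphi$ is then even easier to verify, and $\beta-m>1+\frac{1}{p}$ gives more H\"older room), so neither issue affects the validity of the overall scheme.
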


\begin{proof} The inclusion $W^{\beta,p}(\Gamma)\subset W^{\alpha,p}(\Gamma)$ is clear. We now show the norm relation in several cases depending on $\alpha$ and $\beta$.\\
\noindent\textbf{Case 1:} $\beta<1$.
    Let $\mathcal{S}=\{(x,y)\in\Gamma\times\Gamma : d(x,y)\leq 1\}$, using the inequality $|a-b|^p\leq 2^{p-1}|a|^p+2^{p-1}|b|^p$ we have
    \begin{equation*}
        \begin{split}
            &\quad\int_{\Gamma}\int_{\Gamma} \frac{|f(x)-f(y)|^p}{d(x,y)^{1+\alpha p}}dxdy \\&\leq \int_{\mathcal{S}} \frac{|f(x)-f(y)|^p}{d(x,y)^{1+\alpha p}}dxdy  +2^{p-1}\int_{\Gamma^2}(|f(x)|^p + |f(y)|^p)dxdy\\
            &\leq \int_{\mathcal{S}} \frac{|f(x)-f(y)|^p}{d(x,y)^{1+\beta p}}dxdy + C\|f\|_{\mathbb{L}_p(\Gamma)}^p\leq C\|f\|_{W^{\beta,p}(\Gamma)}^p.
        \end{split}
    \end{equation*}
Combining this inequality with the inequality $\|f\|_{W^{\floor{\alpha},p}(\Gamma)}\leq \|f\|_{W^{\beta,p}(\Gamma)}$ we have $\|f\|_{W^{\alpha,p}(\Gamma)}\leq \|f\|_{W^{\beta,p}(\Gamma)}$. Therefore $W^{\beta,p}(\Gamma)\hookrightarrow W^{\alpha,p}(\Gamma)$.\\

\noindent\textbf{Case 2:} $\beta=1$, $\alpha\not=\frac{1}{p}$.
By Theorem~\ref{thm:W_chatacterization_3} we have $W^{1,p}(\Gamma) = \bigoplus_{e\in\mathcal{E}}W^{1,p}(e)\cap C(\Gamma)$ and, depending on the value of $\alpha$, either $W^{\alpha,p}(\Gamma) \cong \bigoplus_{e\in\mathcal{E}}W^{\alpha,p}(e)\cap C(\Gamma)$ or $W^{\alpha,p}(\Gamma) \cong \bigoplus_{e\in\mathcal{E}}W^{\alpha,p}(e)$. \citeA[Proposition~2.2]{di_nezza} states that $W^{1,p}(e)\hookrightarrow W^{\alpha,p}(e)$, therefore, in both cases of $\alpha$ we have $W^{1,p}(\Gamma)\hookrightarrow W^{\alpha,p}(\Gamma)$.\\

\noindent\textbf{Case 3:} $\beta=1$, $\alpha=\frac{1}{p}$. By Case~1 we have $W^{\frac{p+1}{2p},p}(\Gamma)\hookrightarrow W^{\alpha,p}(\Gamma)$, and by Case~2 we have $W^{1,p}(\Gamma)\hookrightarrow W^{\frac{p+1}{2p},p}(\Gamma)$. Therefore $W^{1,p}(\Gamma)\hookrightarrow W^{\alpha,p}(\Gamma)$.\\

\noindent\textbf{Case 4:} $\floor{\alpha}=\floor{\beta}$. Follows analogously to Case~1.\\

\noindent\textbf{Case 5:} $\floor{\alpha}<\floor{\beta}$. By Case~4 we have $W^{\beta,p}(\Gamma)\hookrightarrow W^{\floor{\beta},p}(\Gamma)$, and by definition of integer indexed Sobolev spaces we have $W^{\floor{\beta},p}(\Gamma)\hookrightarrow W^{\floor{\alpha}+1,p}(\Gamma)$. Finally $ W^{\floor{\alpha}+1,p}(\Gamma) \hookrightarrow W^{\alpha,p}(\Gamma)$ follows from definition and an argument similar to Case~2 and Case~3.
\end{proof}

\begin{theorem}\label{thm:embedding_1}
    Let $(p,\alpha)\in[1,\infty)\times(0,1)$.
    \begin{enumerate}[i)]
        \item if $\alpha<1/p$ then $W^{\alpha,p}(\Gamma)\compactright \mathbb{L}_k(\Gamma)$ for any $k<p/(1-\alpha p)$.
        \item if $\alpha=1/p$ then $W^{\alpha,p}(\Gamma)\compactright \mathbb{L}_k(\Gamma)$ for any $k<\infty$.
        \item if $\alpha>1/p$ then $W^{\alpha,p}(\Gamma)\compactright C^{(0),\lambda}(\Gamma)$ for any $\lambda<\alpha - 1/p$.
    \end{enumerate}
\end{theorem}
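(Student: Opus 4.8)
\emph{Proof proposal.}
The plan is to reduce all three cases to the classical one-dimensional fractional Sobolev embeddings on each edge and to reassemble the edgewise conclusions using the finiteness of $\mathcal{E}$. The key observation, valid in every regime, is that for $\alpha\in(0,1)$ the norm $\|\cdot\|_{W^{\alpha,p}(\Gamma)}$ contains $\sum_{e\in\mathcal{E}}\|\cdot\|_{W^{\alpha,p}(e)}^p$ as one of its summands; hence restriction to the edges is a norm-nonincreasing continuous embedding
\[
W^{\alpha,p}(\Gamma)\hookrightarrow\bigoplus_{e\in\mathcal{E}}W^{\alpha,p}(e).
\]
Because a finite direct sum of compact operators is compact, a compact map composed with a bounded map is compact, and $\mathbb{L}_k(\Gamma)=\bigoplus_{e\in\mathcal{E}}\mathbb{L}_k(e)$, it then suffices to prove the stated compact embeddings on a single edge $e$, which is isometric to an interval (or, for a loop, a circle).

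For part (i) I would invoke the one-dimensional fractional Rellich--Kondrachov theorem, $W^{\alpha,p}(e)\compactright\mathbb{L}_k(e)$ for $k<p/(1-\alpha p)$ (cf.~\citeA[Theorem~7.1]{di_nezza}), and compose with the embedding above to obtain $W^{\alpha,p}(\Gamma)\compactright\mathbb{L}_k(\Gamma)$. Part (ii) follows the same template provided the edgewise compact embedding holds for every finite $k$; at the critical exponent $\alpha=1/p$ this is obtained by a subcritical detour: given $k<\infty$, pick $\alpha'<1/p$ with $k<p/(1-\alpha'p)$ and write $W^{1/p,p}(e)\hookrightarrow W^{\alpha',p}(e)\compactright\mathbb{L}_k(e)$, the first inclusion being the elementary monotonicity of fractional Sobolev spaces on an interval.

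Part (iii) is the genuinely graph-theoretic case, because the target $C^{(0),\lambda}(\Gamma)$ enforces H\"older continuity \emph{across} vertices and is therefore not a direct sum over edges. I would split it into two steps. First, I would establish the bounded embedding $W^{\alpha,p}(\Gamma)\hookrightarrow C^{(0),\mu}(\Gamma)$ with $\mu=\alpha-\tfrac1p$: the edgewise Morrey embedding \citeA[Theorem~8.2]{di_nezza} controls both $\|f\|_{\mathbb{L}_\infty(e)}$ and the edgewise seminorm $[f]_{C^{0,\mu}(e)}$ by $\|f\|_{W^{\alpha,p}(e)}$, and the \emph{global} H\"older seminorm is recovered from the edgewise ones by a geodesic-decomposition argument. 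Concretely, for $x,y\in\Gamma$ choose a shortest path passing through vertices $v_1,\dots,v_m$ (with $m\le|\mathcal{V}|$, since a geodesic does not repeat vertices), set $v_0=x$, $v_{m+1}=y$, and use that $f$ is continuous at the vertices (which holds by Theorem~\ref{thm:W_characterization_2}, applicable since $\tfrac1p<\alpha<1$ forces $p>1$) to telescope
\[
|f(x)-f(y)|\le\sum_{i=0}^{m}|f(v_i)-f(v_{i+1})|\le (|\mathcal{V}|+1)\Big(\max_{e\in\mathcal{E}}[f]_{C^{0,\mu}(e)}\Big)d(x,y)^{\mu},
\]
where each segment satisfies $d(v_i,v_{i+1})\le d(x,y)$ because the segments partition a geodesic. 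Second, since on the compact metric space $(\Gamma,d)$ the space $C^{(0),\mu}(\Gamma)$ is exactly the $\mu$-H\"older space, the compact embedding $C^{(0),\mu}(\Gamma)\compactright C^{(0),\lambda}(\Gamma)$ for $\lambda<\mu$ follows from the Arzel\`a--Ascoli theorem (bounded H\"older sets are uniformly bounded and equicontinuous, hence $C(\Gamma)$-precompact) together with the interpolation inequality $[g]_{C^{0,\lambda}}\le (2\|g\|_{\mathbb{L}_\infty})^{1-\lambda/\mu}[g]_{C^{0,\mu}}^{\lambda/\mu}$, which upgrades uniform convergence to convergence in $C^{(0),\lambda}(\Gamma)$. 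Composing the two steps yields $W^{\alpha,p}(\Gamma)\compactright C^{(0),\lambda}(\Gamma)$.

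The step I expect to be the main obstacle is the geodesic-decomposition estimate in part (iii): unlike the $\mathbb{L}_k$ targets of (i)--(ii), the H\"older norm couples distinct edges through the shared vertices, so the reduction to edgewise data is not automatic. It genuinely relies on the continuity of the vertex traces (guaranteed only for $\alpha>\tfrac1p$), on the finiteness of $\mathcal{V}$ to bound the number of geodesic segments, and on the fact that no geodesic segment is longer than $d(x,y)$, which is what keeps $\sum_i d(v_i,v_{i+1})^{\mu}$ comparable to $d(x,y)^{\mu}$. The remaining verifications---that composition and finite direct sums preserve compactness, and that the edgewise embeddings apply equally to loop-edges (where $C^{k,\lambda}(e)$ is defined through the $\xi$-parameterization)---are routine.
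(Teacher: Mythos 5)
Your proposal is correct. For parts (i) and (ii) it is essentially the paper's argument: the norm-nonincreasing inclusion $W^{\alpha,p}(\Gamma)\hookrightarrow\bigoplus_{e\in\mathcal{E}}W^{\alpha,p}(e)$ followed by the classical one-dimensional compact embeddings applied edge by edge (the paper cites \citeA[Theorem~4.58]{demengel} where you cite \citeA{di_nezza}, and it handles the critical case $\alpha=1/p$ directly rather than by your subcritical detour through some $\alpha'<1/p$ --- a harmless variation). For part (iii) you take a genuinely different factorization of the compactness. The paper writes $W^{\alpha,p}(\Gamma)\hookrightarrow\bigoplus_{e\in\mathcal{E}}W^{\alpha,p}(e)\cap C(\Gamma)\compactright\bigoplus_{e\in\mathcal{E}}C^{0,\lambda}(e)\cap C(\Gamma)=C^{(0),\lambda}(\Gamma)$, placing the compactness in the edgewise Sobolev-to-H\"older step and treating the final identification of ``edgewise H\"older plus globally continuous'' with ``globally H\"older'' as immediate; you instead first prove a \emph{bounded} embedding into the critical space $C^{(0),\mu}(\Gamma)$ with $\mu=\alpha-1/p$, making the vertex-gluing explicit via the geodesic telescoping estimate (the H\"older analogue of the paper's Lipschitz argument \eqref{eq:lip_proof}), and then place the compactness inside the H\"older scale on the compact space $\Gamma$ via Arzel\`a--Ascoli together with the interpolation inequality $[g]_{C^{0,\lambda}}\leq(2\|g\|_{\mathbb{L}_\infty})^{1-\lambda/\mu}[g]_{C^{0,\mu}}^{\lambda/\mu}$. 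Your route buys two things: it supplies a proof of exactly the step the paper leaves implicit (the equality $\bigoplus_{e\in\mathcal{E}}C^{0,\lambda}(e)\cap C(\Gamma)=C^{(0),\lambda}(\Gamma)$ is precisely where distinct edges couple through vertices, and your telescoping argument, together with your correct observation that $1/p<\alpha<1$ forces $p>1$ so that Theorem~\ref{thm:W_characterization_2} applies, is what justifies it), and it makes the H\"older compactness elementary and self-contained rather than cited; the paper's route is shorter and uniform across the three cases. One minor point of hygiene in your telescoping bound: the seminorms $[f]_{C^{0,\mu}(e)}$ are defined through edge parameterizations, i.e., with respect to the intrinsic metric of $e$, which on an edge can exceed the global geodesic metric $d$; the estimate survives because each geodesic segment from $v_i$ to $v_{i+1}$ lies in a single edge and its length dominates the intrinsic edge distance while being at most $d(x,y)$, so $|f(v_i)-f(v_{i+1})|\leq[f]_{C^{0,\mu}(e)}\,d(x,y)^{\mu}$ as you claim, but the justification should pass through the segment lengths rather than through $d(v_i,v_{i+1})$ itself.
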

\begin{proof}
    The continuous inclusion $W^{\alpha,p}(\Gamma)\hookrightarrow \bigoplus_{e\in\mathcal{E}} W^{\alpha,p}(e)$ is immediate from definition. By \citeA[Theorem~4.58]{demengel} we have $$\bigoplus_{e\in\mathcal{E}} W^{\alpha,p}(e)\compactright \bigoplus_{e\in\mathcal{E}} \mathbb{L}_k(e) = \mathbb{L}_k(\Gamma)$$ for any $k<p/(1-\alpha p)$, hence the first statement follows. The second statement follows similarly by \citeA[Theorem~4.58]{demengel}. The third statement follows similarly by employing Theorem~\ref{thm:W_chatacterization_3} and \citeA[Theorem~4.58]{demengel}:
    $$W^{\alpha,p}(\Gamma)\hookrightarrow\bigoplus_{e\in\mathcal{E}}W^{\alpha,p}(e)\cap C(\Gamma)\compactright\bigoplus_{e\in\mathbb{E}}C^{0,\lambda}(e)\cap C(\Gamma) = C^{(0),\lambda}(\Gamma).$$
\end{proof}
The following theorem shows a generalization of the third statement of Theorem~\ref{thm:embedding_1}.

\begin{theorem}\label{thm:W_imbedding_1}
    Let $p>1$ and $\alpha\in(\frac{1}{p},\infty)\backslash\{2k+\frac{1}{p} : k\in\mathbb{N}\}$. We have the following compact inclusions.
    \begin{enumerate}[i)]
        \item If $\alpha\in\bigcup_{k\in\mathbb{N}_0}\big(2k+\frac{1}{p},2k+1+\frac{1}{p}\big)$ then $W^{\alpha,p}(\Gamma)\compactright C^{\left(\floor{\alpha-\frac{1}{p}}\right), \lambda}(\Gamma)$ for any non-negative $\lambda<\alpha-\frac{1}{p} - \floor{\alpha-\frac{1}{p}}$.
        \item If $\alpha\in\bigcup_{k\in\mathbb{N}_0}\big[2k+1+\frac{1}{p},2k+2+\frac{1}{p}\big)$ then $W^{\alpha,p}(\Gamma)\compactright C^{\left(2\floor{\frac{\alpha}{2}-\frac{1}{2p}}\right), \lambda}(\Gamma)$ for any $\lambda\leq1$.
    \end{enumerate}
\end{theorem}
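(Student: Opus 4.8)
The plan is to mirror the proof of Theorem~\ref{thm:embedding_1}(iii): I would feed the structural characterization of $W^{\alpha,p}(\Gamma)$ from Theorem~\ref{thm:W_chatacterization_3} into the classical compact Sobolev embeddings on each edge, and then verify that the vertex-continuity constraints pass to the limit so that the limit lands in the prescribed H\"older space on $\Gamma$. Throughout set $m=2\floor{\frac{\alpha}{2}-\frac{1}{2p}}$, so that Theorem~\ref{thm:W_chatacterization_3} gives $W^{\alpha,p}(\Gamma)\cong\bigoplus_{e\in\mathcal{E}}W^{\alpha,p}(e)\cap C^{(m)}(\Gamma)$; in particular every $\varphi\in W^{\alpha,p}(\Gamma)$ already has $D^{2j}\varphi\in C(\Gamma)$ for $j=0,\dots,m/2$, which is precisely the family of even derivatives that the target space $C^{(m),\lambda}(\Gamma)$ requires to be globally $\lambda$-H\"older.

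First I would record the edge-wise compact embeddings via \citeA[Theorem~4.58]{demengel}. In case (i), $\alpha-\frac1p\in(2k,2k+1)$ and $m=2k=\floor{\alpha-\frac1p}$, so $W^{\alpha,p}(e)\compactright C^{2k,\lambda}(e)$ for every non-negative $\lambda<\alpha-\frac1p-2k$, matching the claimed exponent. In case (ii), $\alpha-\frac1p\in[2k+1,2k+2)$, so the edge embeds compactly into $C^{2k+1,\sigma'}(e)$ for $\sigma'<\alpha-\frac1p-(2k+1)$; here the target differentiability $m=2k$ sits one below the edge-wise order $2k+1$, reflecting that the odd derivative $D^{2k+1}\varphi$ need not be continuous across vertices, and the surplus edge regularity is instead spent on Lipschitz control: composing with the continuous inclusion $C^{2k+1,\sigma'}(e)\hookrightarrow C^{2k,1}(e)$ (a bounded $(2k{+}1)$-st derivative forces a Lipschitz $2k$-th derivative) yields $W^{\alpha,p}(e)\compactright C^{2k,1}(e)$, hence the admissibility of all $\lambda\le1$. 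This composition needs $\alpha>2k+1+\frac1p$; at the single endpoint $\alpha=2k+1+\frac1p$ one has $\sigma'=0$ and, since $W^{1/p,p}\not\hookrightarrow\mathbb{L}_\infty$, only $\lambda<1$ survives.

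Next I would run the extraction. A bounded sequence $\{\varphi_n\}\subset W^{\alpha,p}(\Gamma)$ is bounded in each $W^{\alpha,p}(e)$; as $\mathcal{E}$ is finite, a diagonal argument produces a subsequence converging edge-wise in $C^{2k,\lambda}(e)$ to some $\varphi$. Since convergence in $C^{2k,\lambda}(e)$ forces uniform convergence of $D^{2j}$ for all $j\le m/2$, and continuity of each even derivative across a vertex is a closed condition, the limit inherits $D^{2j}\varphi\in C(\Gamma)$ for $j=0,\dots,m/2$, i.e. the defining constraints of $C^{(m),\lambda}(\Gamma)$.

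The hard part---and the only genuinely graph-specific step---is upgrading edge-wise H\"older control to global H\"older control on $\Gamma$, which the target norm demands of the even derivatives. I would isolate the elementary estimate that for $g\in C(\Gamma)$ with finite edge-wise seminorms one has $[g]_{C^{0,\lambda}(\Gamma)}\le C\max_{e\in\mathcal{E}}[g]_{C^{0,\lambda}(e)}$: decompose a shortest path from $x$ to $y$ at its intervening vertices $v_0=x,v_1,\dots,v_{N+1}=y$, telescope $|g(x)-g(y)|\le\sum_i|g(v_i)-g(v_{i+1})|$, bound each term on a single edge, and recombine using $\#\mathcal{V}<\infty$ together with the concavity of $t\mapsto t^\lambda$ to obtain $\sum_i d(v_i,v_{i+1})^\lambda\le C\,d(x,y)^\lambda$; for $\lambda=1$ this is exactly estimate~\eqref{eq:lip_proof}. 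Applying this bound to $g=D^{2j}(\varphi_n-\varphi)$ converts the edge-wise convergence already obtained into convergence of the global H\"older seminorms, so $\varphi_n\to\varphi$ in $C^{(m),\lambda}(\Gamma)$ and the inclusion is compact. The two statements then differ only in the range of $\lambda$ recorded in the first step.
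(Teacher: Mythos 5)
Your proposal follows essentially the same route as the paper's proof: Theorem~\ref{thm:W_chatacterization_3} supplies the structural characterization, \citeA[Theorem~4.58]{demengel} gives the edge-wise compact embeddings, and in case (ii) the surplus edge regularity is converted into Lipschitz control of the top even derivative via the mean value theorem and the telescoping estimate \eqref{eq:lip_proof} --- exactly as in the paper, where $\rho=\frac{1}{2}\left(\alpha-\frac{1}{p}-\floor{\alpha-\frac{1}{p}}\right)$ plays the role of your $\sigma'$. Two points where you go beyond the paper's text deserve note. First, you make explicit a step the paper leaves implicit in case (i): that edge-wise $\lambda$-H\"older bounds together with global continuity of the even derivatives upgrade to the global seminorm bound $[g]_{C^{0,\lambda}(\Gamma)}\leq C\max_{e\in\mathcal{E}}[g]_{C^{0,\lambda}(e)}$, via shortest-path telescoping and $d(v_i,v_{i+1})^{\lambda}\leq d(x,y)^{\lambda}$; the paper records this only for $\lambda=1$ in \eqref{eq:lip_proof} and otherwise asserts $\bigoplus_{e\in\mathcal{E}}C^{m,\lambda}(e)\cap C^{(m)}(\Gamma)\cong C^{(m),\lambda}(\Gamma)$ without comment, and your version is needed to conclude convergence in the target norm rather than mere membership of the limit. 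Second, your endpoint caveat at $\alpha=2k+1+\frac{1}{p}$ is a genuine catch rather than a deviation: there the paper's proof takes $\rho=0$, and the invoked compact embedding $W^{\alpha,p}(e)\compactright C^{2k+1,0}(e)$ fails, since $D^{2k+1}\varphi\in W^{1/p,p}(e)$ and $W^{1/p,p}\not\hookrightarrow\mathbb{L}_{\infty}$ in this critical case; consequently $D^{2k}\varphi$ need not be Lipschitz, $\lambda=1$ is not attainable at that endpoint, and your fallback through $W^{\alpha-\epsilon,p}(\Gamma)$ (Theorem~\ref{thm:sobolev_subspace} followed by case (i)) correctly recovers all $\lambda<1$. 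In short, your argument is sound, coincides with the paper's on the interior of both parameter ranges, and is more careful than the paper at the left endpoint of case (ii), where the statement as printed (allowing $\lambda\leq1$ at $\alpha=2k+1+\frac{1}{p}$) requires exactly the restriction you identify.
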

\begin{proof}
    Suppose that $\alpha\in\bigcup_{k\in\mathbb{N}}\big(2k+\frac{1}{p},2k+1+\frac{1}{p}\big)$. In this case we have $\floor{\alpha-\frac{1}{p}}=2\floor{\frac{\alpha}{2}-\frac{1}{2p}}$. For any $0\leq\lambda<\alpha-\frac{1}{p} - \floor{\alpha-\frac{1}{p}}$ and $e\in\mathcal{E}$ we have by \citeA[Theorem~4.58]{demengel} that $W^{\alpha,p}(e)\compactright C^{\floor{\alpha-\frac{1}{p}},\lambda}(e)$. Using Theorem~\ref{thm:W_chatacterization_3} we have
    \begin{equation*}
        \begin{split}
            W^{\alpha,p}(\Gamma)&\cong\bigoplus_{e\in\mathcal{E}}W^{\alpha,p}(e)\cap C^{\left(2\floor{\frac{\alpha}{2}-\frac{1}{2p}}\right)}(\Gamma)\\&\compactright\bigoplus_{e\in\mathcal{E}}C^{\floor{\alpha-\frac{1}{p}},\lambda}(e)\cap C^{\left(\floor{\alpha-\frac{1}{p}}\right)}(\Gamma) \cong C^{\left(\floor{\alpha-\frac{1}{p}}\right),\lambda}(\Gamma).
        \end{split}
    \end{equation*}
    This shows the first statement. For the second statement we have  $\floor{\alpha-\frac{1}{p}}=2\floor{\frac{\alpha}{2}-\frac{1}{2p}}+1$. Let $\rho = \frac{1}{2}\left(\alpha-\frac{1}{p} - \floor{\alpha-\frac{1}{p}}\right)$ and $\lambda\in[0,1]$. By \citeA[Theorem~4.58]{demengel} we have
    \begin{equation*}
        \begin{split}
            W^{\alpha,p}(\Gamma)&\cong\bigoplus_{e\in\mathcal{E}}W^{\alpha,p}(e)\cap C^{\left(2\floor{\frac{\alpha}{2}-\frac{1}{2p}}\right)}(\Gamma)\\&\compactright\bigoplus_{e\in\mathcal{E}}C^{2\floor{\frac{\alpha}{2}-\frac{1}{2p}}+1,\rho}(e)\cap C^{\left(2\floor{\frac{\alpha}{2}-\frac{1}{2p}}\right)}(\Gamma) \\
            &\hookrightarrow C^{\left(2\floor{\frac{\alpha}{2}-\frac{1}{2p}}\right),1}(\Gamma)\hookrightarrow C^{\left(2\floor{\frac{\alpha}{2}-\frac{1}{2p}}\right),\lambda}(\Gamma).
        \end{split}
    \end{equation*}
    The second continuous inclusion follows because by means of the mean value theorem and \eqref{eq:lip_proof}, the functions in $$\bigoplus_{e\in\mathcal{E}}C^{2\floor{\frac{\alpha}{2}-\frac{1}{2p}}+1,\rho}(e)\cap C^{\left(2\floor{\frac{\alpha}{2}-\frac{1}{2p}}\right)}(\Gamma)$$ have a $2\floor{\frac{\alpha}{2}-\frac{1}{2p}}$ derivative that is Lipschitz. The third continuous inclusion is a consequence of the compactness of $\Gamma$.
\end{proof}
A classical result in analysis in the fact that $W^{1,\infty}$ is the space of Lipschitz functions; this result extends to the metric graphs scenario as we show below.
\begin{theorem}
    $W^{1,\infty}(\Gamma)$ is the space of Lipschitz functions on $\Gamma$.
\end{theorem}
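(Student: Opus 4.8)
The plan is to prove the two inclusions separately, using the definition of $W^{1,\infty}(\Gamma)$ obtained by setting $\alpha=1$ in Definition~\ref{def:W_Gamma_1}: since $\floor{\frac{\alpha-1}{2}}=0$, this is exactly the space of functions $\varphi\in\bigoplus_{e\in\mathcal{E}}W^{1,\infty}(e)$ for which $D^0\varphi=\varphi$ is continuous on $\Gamma$. I will combine this with the classical one-dimensional identity $W^{1,\infty}(0,l)=\mathrm{Lip}([0,l])$.

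First I would show that every $L$-Lipschitz function $\varphi$ on $\Gamma$ lies in $W^{1,\infty}(\Gamma)$. Continuity on $\Gamma$ is immediate. For the edgewise regularity I would use that for $x,y$ on a common edge $e$ the geodesic distance satisfies $d(x,y)\le|s-t|$ whenever $x=\zeta(s)$ and $y=\zeta(t)$ for a natural parameterization $\zeta$, because paths inside $e$ form a subfamily of all admissible paths, so $d(x,y)\le d_e(x,y)\le|s-t|$. Hence $|\varphi\circ\zeta(s)-\varphi\circ\zeta(t)|\le L|s-t|$, giving $\varphi\circ\zeta\in\mathrm{Lip}([0,l_e])=W^{1,\infty}(0,l_e)$, i.e.\ $\varphi|_e\in W^{1,\infty}(e)$; for a loop I would apply the same reasoning to both $\zeta$ and the wrapped parameterization $\xi$. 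This yields $\varphi\in W^{1,\infty}(\Gamma)$.

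For the converse, let $\varphi\in W^{1,\infty}(\Gamma)$, so $\varphi$ is continuous and $\varphi|_e$ is $L_e$-Lipschitz on each edge; set $L=\max_{e\in\mathcal{E}}L_e<\infty$, which is finite since $\mathcal{E}$ is finite. Fix $x,y\in\Gamma$ and let $p:[0,l]\to\Gamma$ be a path from $x$ to $y$ of length $l$. Since $p^{-1}(\mathcal{V})$ is finite, I can pick $0=t_0<t_1<\dots<t_m=l$ containing $p^{-1}(\mathcal{V})$ so that on each $[t_j,t_{j+1}]$ the path stays within the closure of a single edge $e_j$ and is an arc-length traversal there. Then $|\varphi(p(t_j))-\varphi(p(t_{j+1}))|\le L_{e_j}(t_{j+1}-t_j)$ by the edgewise Lipschitz bound, and a telescoping application of the triangle inequality gives $|\varphi(x)-\varphi(y)|\le L\sum_j(t_{j+1}-t_j)=L\,l$. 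Taking the infimum over all such paths yields $|\varphi(x)-\varphi(y)|\le L\,d(x,y)$, so $\varphi$ is Lipschitz, and combining the two inclusions proves the claim.

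The main obstacle, and the step deserving the most care, is the converse: showing that each increment $|\varphi(p(t_j))-\varphi(p(t_{j+1}))|$ is controlled by the arc length $t_{j+1}-t_j$. This rests on two facts — that $p$ is a local isometry on edge interiors, so between consecutive vertex-hits it is genuinely a unit-speed traversal of a subarc of one edge, and that the edgewise Lipschitz constant relative to the intrinsic edge metric dominates the increment because $d_{e_j}(p(t_j),p(t_{j+1}))\le t_{j+1}-t_j$. Working directly with the infimum over paths, rather than invoking existence of a length-minimizing geodesic, keeps the argument self-contained; the estimate is precisely the telescoping bound already employed in \eqref{eq:lip_proof}.
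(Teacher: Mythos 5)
Your proof is correct and follows essentially the same route as the paper's: both directions reduce to the classical one-dimensional identity $W^{1,\infty}=\mathrm{Lip}$ applied edgewise, combined with a telescoping estimate along paths through vertices (the paper's analogue of \eqref{eq:lip_proof}) to pass from edgewise to global Lipschitz continuity. Your variant of taking the infimum over all paths rather than fixing a shortest path, which also yields the sharper constant $L=\max_{e\in\mathcal{E}}L_e$ in place of $\#\mathcal{V}\sup|D\varphi|$, is a minor refinement, not a different approach.
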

\begin{proof}
    Let $\varphi\in W^{1,\infty}(\Gamma)$, by \citeA[Theorem~4]{evans}(Section 5.8) we have that $\varphi|_e$ is Lipschitz for every $e$. Because $\varphi$ is continuous on $\Gamma$ we have by an argument similar to~\eqref{eq:lip_proof} that $\varphi$ is Lipschitz on $\Gamma$. On the other hand let $\varphi$ be Lipschitz on $\Gamma$, hence $\varphi$ is continuous on $\Gamma$, hence bounded, and Lipschitz on every edge $e$, by \citeA[Theorem~4]{evans}(Section 5.8) we have that the weak derivative of $D(\varphi|_e)$ exists and is bounded for every edge $e$. Therefore by the definition of weak derivative on $\Gamma$ we have $\varphi\in W^{1,\infty}(\Gamma)$.
\end{proof}

\begin{remark}
    Let us conclude this section by remarking on the changes that result from adding or removing vertices of degree $2$ that are incident to two different edges on the class of functions we study. Let $z$ be a vertex in $\Gamma$ of degree $2$ that is incident to two distinct edges $e_1$ and $e_2$. Consider the metric graph $\Gamma'$ constructed from $\Gamma$ by removing the edges $e_1,e_2$ and adding the edge $e$ connecting $(\partial e_1 \cup \partial e_2)\backslash\{z\}$ with length $l_e=l_{e_1}+l_{e_2}$. We can easily construct a bijective isometry $f:\Gamma'\to\Gamma$ that satisfies $d(f(x),f(y)) = d(x,y)$ for every $x,y\in\Gamma'$. Every function $\varphi\in\mathbb{L}_p(\Gamma)$ has a corresponding function on $\Gamma'$ defined by $\varphi\mapsto \varphi\circ f$, and visa-versa, hence 
    $\mathbb{L}_p(\Gamma)\cong \mathbb{L}_p(\Gamma')$.
    However, 
    $$C^{(k),\lambda}(\Gamma)\not\cong C^{(k),\lambda}(\Gamma'),\;\text{and } W^{\alpha,p}(\Gamma)\not\cong W^{\alpha,p}(\Gamma').$$
    This is by design since we would like to allow the definition to be useful for general boundary conditions when applied to PDEs. For example, consider the graphs $\Gamma = [-1,1]$ with two vertices at $\pm1$, and $\Gamma'=[-1,0]\cup[0,1]$ with vertices at $0$ and $\pm1$. Let $\kappa>0$, and define $u:\Gamma\to\mathbb{R}$ as
    \begin{equation*}
            u(x) = e^{\kappa |t|}.
    \end{equation*}
    The function $u$ is a solution of the PDE with the generalized Kirchhoff vertex conditions \cite{mathcomp_paper}
    \begin{equation*}
        \begin{cases}
            (\kappa^2-\Delta)u = 0,\\
            \partial_+u(0) + \partial_-u(0) = 2\kappa,\\
            u(0^-) = u(0^+).
        \end{cases}
    \end{equation*}
    The function $u\in \left(C^{(k),\lambda}(\Gamma')\backslash  C^{(k),\lambda}(\Gamma)\right) \cap \left(W^{k,p}(\Gamma')\backslash  W^{k,p}(\Gamma)\right)$ for every $k,p> 1$ and $\lambda\in[0,1]$.
\end{remark}

\subsection{$H$ Spaces}
We now define the $H$ Sobolev spaces on a compact metric graph $\Gamma$, first intrinsically (Definition~\ref{def:sobolev_H_def_1}) and then via edge parameterizations (Definition~\ref{def:sobolev_H_def_2}). The latter motivates the notion of adjoint Sobolev spaces. 

\begin{definition}
For $(\alpha,p)\in\mathbb{N}\times (0,\infty)$ and $e\in\mathcal{E}$, define $H^{\alpha}(e)$ as the space of functions $\varphi\in \mathbb{L}_2(e)$ such that $\varphi\circ \zeta \in H^{\alpha}(0,l_e)$ for any natural parameterization of $e$, where $H^{\alpha}(0,l_e)$ is the fractional Sobolev space on the interval $(0,l_e)$.
\end{definition}
\citeA[Theorem~B.8]{mclean} directly implies the interpolation identification
$$H^{\alpha}(e) \cong \left[H^{\floor{\alpha}}(e), H^{\ceil{\alpha}}(e)\right]_{\alpha-\floor{\alpha}}$$
for every $\alpha\in(0,\infty)\backslash\mathbb{N}_0$.

\begin{definition}\label{def:sobolev_H_def_1}
    Let $k\in\mathbb{N}$. We define the Sobolev space $H^{k}(\Gamma)$ as the subspace of $\bigoplus_{e\in\mathcal{E}}H^{k}(e)$ consisting of functions $\varphi$ such that for every $\epsilon>0$ and isometry $\gamma:(-\epsilon,\epsilon)\to\Gamma$, the composition $(\varphi\circ\gamma)|_{(-\epsilon,\epsilon)\backslash\gamma^{-1}(\mathcal{V})}$ belongs to $H^k((-\epsilon,\epsilon)\backslash\gamma^{-1}(\mathcal{V}))$, and the even order lateral derivatives up to $k-1$ agree on the vertices, that is
    $$\partial^{2m}_+(\varphi\circ\gamma)(t) = \partial^{2m}_-(\varphi\circ\gamma)(t)$$
    for every $m\in\{0,\dots, \floor{\frac{k-1}{2}}\}$ and $t\in\gamma^{-1}(\mathcal{V})$.
\end{definition}

\begin{definition}\label{def:sobolev_H_def_2}
    Let $k\in\mathbb{N}$, and let $\eta = \{\eta\}_{e\in\mathbb{E}}$ be a parameterization of the edges of $\Gamma$. 
    \begin{enumerate}[(i)]
        \item The Sobolev space $H^k(\Gamma)$ is the subspace of $\bigoplus_{e\in\mathcal{E}}H^k(e)$ consisting of functions $\varphi$ such that
    $$D_{\eta}^{2m}\varphi_{e_1}(v) = D_{\eta}^{2m}\varphi_{e_2}(v)$$
    for every $e_1,e_2\in\mathbb{E}$, $v\in e_1\cap e_2$ and $m\in\{0,\dots,\floor{\frac{k-1}{2}}\}$.
    \item The adjoint Sobolev space $H^k_{*,\eta}(\Gamma)$ with respect the parameterization $\eta$ is defined as the subspace $\bigoplus_{e\in\mathcal{E}}H^{k}(e)$ consisting of functions $\varphi$ such that
    $$D_{\eta}^{2m-1}\varphi_{e_1}(v) = D_{\eta}^{2m-1}\varphi_{e_2}(v)$$
    for every $e_1,e_2\in\mathbb{E}$, $v\in e_1\cap e_2$, and $m\in\{1,\dots,\floor{\frac{k}{2}}\}$.
    \end{enumerate}
\end{definition}

It is straightforward to verify that Definitions~\ref{def:sobolev_H_def_1} and~\hyperref[def:sobolev_H_def_2]{\getrefnumber{def:sobolev_H_def_2}(i)} are equivalent for $H^k(\Gamma)$. Moreover, the following duality-type relations hold for any parameterization $\eta$:
$$\varphi\in H^k(\Gamma) \Rightarrow D_{\eta}\varphi\in H^{k-1}_{*,\eta}(\Gamma),\quad \varphi\in H^k_{*,\eta}(\Gamma) \Rightarrow D_{\eta}\varphi\in H^{k-1}(\Gamma).$$
Observe that $\varphi\in H^k(\Gamma)$ implies $D_{\eta}\varphi\in H^{k-1}_{*,\eta}(\Gamma)$ for any parameterization $\eta$. Furthermore, we also have the immediate identifications
\begin{align}
    H^1_{*,\eta}(\Gamma)&\cong \bigoplus_{e\in\mathcal{E}} H^{1}(e),\label{eq:H_dual_ident_1}\\
    H^1(\Gamma)&\cong \bigoplus_{e\in\mathcal{E}} H^{1}(e)\cap C(\Gamma).\label{eq:H_1_ident_1}
\end{align}
We set $H^0(\Gamma)=H^0_{*,\eta}(\Gamma)=\mathbb{L}_2(\Gamma)$.
\begin{definition}
    Let $\alpha\in(0,1)$ and let $\eta$ be a parameterization of the edges of $\Gamma$, define the fractional Sobolev space $H^{\alpha}(\Gamma)$ and the fractional adjoint Sobolev space $H^{\alpha}_{*,\eta}(\Gamma)$ via the real interpolation 
    $$H^{\alpha}(\Gamma)=[\mathbb{L}_2(\Gamma), H^1(\Gamma)]_{\alpha},$$
    $$H^{\alpha}_{*,\eta}(\Gamma)=[\mathbb{L}_2(\Gamma), H^1_{*,\eta}(\Gamma)]_{\alpha}.$$
    For $\alpha>1$, if $\floor{\alpha}$ is odd, define $H^{\alpha}(\Gamma)$ as the space of functions $\varphi\in H^{\floor{\alpha}}(\Gamma)$ such that $D^{\floor{\alpha}}_{\eta}\varphi\in H^{\alpha-\floor{\alpha}}_{*,\eta}(\Gamma)$ for every parameterization $\eta$ of the graph $\Gamma$. We equip $H^{\alpha}(\Gamma)$ with the norm
    $$\|\varphi\|^2_{H^{\alpha}(\Gamma)} = \|\varphi\|^2_{H^{\floor{\alpha}}(\Gamma)} + \|D^{\floor{\alpha}}\varphi\|^2_{H^{\alpha-\floor{\alpha}}_{*,\eta}(\Gamma)}.$$
    If $\floor{\alpha}$ is even, $H^{\alpha}(\Gamma)$ is the space of functions $\varphi$ such that $D^{\floor{\alpha}}_{\eta}\varphi=D^{\floor{\alpha}}\varphi\in H^{\alpha-\floor{\alpha}}(\Gamma)$, and we equip $H^{\alpha}(\Gamma)$ with the norm
    $$\|\varphi\|^2_{H^{\alpha}(\Gamma)} = \|\varphi\|^2_{H^{\floor{\alpha}}(\Gamma)} + \|D^{\floor{\alpha}}\varphi\|^2_{H^{\alpha-\floor{\alpha}}(\Gamma)}.$$
\end{definition}
The goal of the next few results is to establish the equivalence between the interpolation theoretic definition of $H^{\alpha}(\Gamma)$  with the norm based $W^{\alpha,2}(\Gamma)$, validating both as natural frameworks for analysis on metric graphs. We will require the following standard result from interpolation theory in the following analysis. 
\begin{lemma}\label{lem:direct_sum_interpolation_lemma}
    Let $n\in\mathbb{N}$, and let $H$ be a vector space. For each $k\in\{1,\dots,n\}$ let $H_{0,k}$ and $H_{1,k}$ be Hilbert spaces with $H_{1,k}\hookrightarrow H_{0,k}\subset H$. Then, for every $\alpha\in(0,1)$, the interpolation space satisfies
    $$\left[\bigoplus_{k=1}^n H_{0,k}, \bigoplus_{k=1}^n H_{1,k}\right]_{\alpha} \cong \bigoplus_{k=1}^n \left[H_{0,k},H_{1,k}\right]_{\alpha}.$$
\end{lemma}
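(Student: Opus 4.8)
The plan is to deduce the stated isomorphism from an equivalence of norms, read off directly from the $K$-functional characterization of the interpolation spaces and the fact that the direct sum of Hilbert spaces carries the $\ell^2$ norm. Recall that for a compatible couple $(X_0,X_1)$ the interpolation space $[X_0,X_1]_\alpha$ (the $K$-method with exponent $2$, as used for Hilbert couples throughout) is normed by
$$\|x\|_{[X_0,X_1]_\alpha}^2 = \int_0^\infty \left(t^{-\alpha} K(t,x)\right)^2 \frac{dt}{t}, \qquad K(t,x) = \inf_{x = x_0 + x_1}\left(\|x_0\|_{X_0} + t\|x_1\|_{X_1}\right).$$
Writing $X_0 = \bigoplus_k H_{0,k}$ and $X_1 = \bigoplus_k H_{1,k}$ with their Hilbertian $\ell^2$ norms, and writing $K_k(t,\cdot)$ for the $K$-functional of the $k$-th couple $(H_{0,k},H_{1,k})$, the central step is to compare $K(t,x)$ with $\left(\sum_k K_k(t,x_k)^2\right)^{1/2}$ for $x=(x_1,\dots,x_n)$.

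For this comparison, observe that every decomposition $x=a+b$ in the sum couple is componentwise, $x_k=a_k+b_k$, with the sum norms given as $\ell^2$ aggregates of the component norms. Setting $u=(\|a_k\|_{H_{0,k}})_k$ and $v=(t\|b_k\|_{H_{1,k}})_k$, the triangle inequality in $\ell^2$ gives $\|u\|_2+\|v\|_2 \geq \|u+v\|_2 \geq \left(\sum_k K_k(t,x_k)^2\right)^{1/2}$, and taking the infimum over decompositions yields the lower bound $K(t,x)\geq \left(\sum_k K_k(t,x_k)^2\right)^{1/2}$. For the matching upper bound I would fix near-optimal componentwise decompositions and combine $\|u\|_2+\|v\|_2 \leq \sqrt{2}\,(\|u\|_2^2+\|v\|_2^2)^{1/2}$ with $\|a_k\|_{H_{0,k}}^2 + t^2\|b_k\|_{H_{1,k}}^2 \leq (\|a_k\|_{H_{0,k}} + t\|b_k\|_{H_{1,k}})^2$ to obtain $K(t,x)\leq \sqrt{2}\,\left(\sum_k K_k(t,x_k)^2\right)^{1/2}$. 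Hence $K(t,x)$ is equivalent to $\left(\sum_k K_k(t,x_k)^2\right)^{1/2}$ with the absolute constants $1$ and $\sqrt{2}$, uniformly in $t$.

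With this pointwise-in-$t$ equivalence in hand, the conclusion follows by substituting into the defining integral and interchanging the finite sum with the integral by Tonelli's theorem:
$$\|x\|_{[X_0,X_1]_\alpha}^2 \asymp \int_0^\infty t^{-2\alpha}\sum_{k=1}^n K_k(t,x_k)^2 \,\frac{dt}{t} = \sum_{k=1}^n \|x_k\|_{[H_{0,k},H_{1,k}]_\alpha}^2 = \|x\|_{\bigoplus_k [H_{0,k},H_{1,k}]_\alpha}^2.$$
Since the two norms are equivalent and the underlying vector spaces coincide, this gives the continuous inclusions in both directions, i.e.\ the isomorphism $\cong$.

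I expect the only real subtlety to be the comparison step: because the direct sum carries the $\ell^2$ norm, the $K$-functional of the sum does not split exactly as $\sum_k K_k(t,x_k)^2$, so one must settle for the two-sided estimate, whose upper half costs the dimension-independent factor $\sqrt{2}$ arising from comparing the $\ell^1$ and $\ell^2$ norms of the pair $(\|u\|_2,\|v\|_2)$. A secondary point worth recording is the convention that $[\cdot,\cdot]_\alpha$ denotes the $K$-method real interpolation with exponent $2$ (equivalently, for Hilbert couples, the complex method), for which the displayed norm is exactly the one in force; were the complex method preferred, the same direct-sum compatibility could be invoked as a standard fact, but the $K$-functional argument above is self-contained.
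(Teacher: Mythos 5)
Your proposal is correct and follows essentially the same route as the paper: both proofs reduce the claim to the observation that decompositions in the direct-sum couple are componentwise, so the $K$-functional splits across the summands, and then integrate. The only difference is cosmetic: the paper works with the quadratic $K$-functional $K(t,u)=\inf\{\|\phi\|_{0}^2+t^2\|\psi\|_{1}^2\}$, for which the splitting is an exact identity $K(t,u)=\sum_k K_k(t,u_k)$, whereas your additive $K$-functional yields the same conclusion via a two-sided estimate at the harmless cost of the factor $\sqrt{2}$.
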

\begin{proof}
    We consider the K-method of interpolation using the norms $$\|\varphi\|_{\bigoplus_{k=1}^n H_{j,k}}^2 = \sum_{k=1}^n\|\varphi_k\|_{H_{j,k}}^2$$ 
    for $j\in\{1,2\},$ where $\varphi=\sum_{k=1}^n\varphi_k$ uniquely with $\varphi_k\in H_{j,k}$. Let $u\in\bigoplus_{k=1}^n H_{0,k}$. The $K$ function is
    \begin{equation*}
        \begin{split}
            & \mathrel{\hphantom{=}} K(t,u)\\
            &=\inf\Bigg\{\|\phi\|_{\bigoplus_{k=1}^n H_{0,k}}^2 + t^2\|\psi\|_{\bigoplus_{k=1}^n H_{1,k}}^2: \phi\in \bigoplus_{k=1}^n H_{0,k},\psi\in \bigoplus_{k=1}^n H_{1,k},\\
            &\hphantom{=\inf\Bigg\{\|\phi\|_{\bigoplus_{k=1}^n H_{0,k}}^2 + t^2\|\psi\|_{\bigoplus_{k=1}^n H_{1,k}}^2:}\varphi+\psi=u\Bigg\}\\
            &= \inf\Bigg\{\sum_{k=1}^n\left(\|\varphi_k\|^2_{H_{0,k}} + t^2 \|\psi_k\|^2_{H_{1,k}}\right)  : \varphi_k\in H_{0,k}, \psi\in H_{1,k},\varphi_k + \psi_k = u_k\\
            &\hphantom{=\inf\Bigg\{\sum_{k=1}^n\left(\|\varphi_k\|^2_{H_{0,k}} + t^2 \|\psi_k\|^2_{H_{1,k}}\right)  :}\forall k\in\{1,\dots,n\}\Bigg\}\\
            &= \sum_{k=1}^n\inf\left\{\|\varphi_k\|^2_{H_{0,k}} + t^2 \|\psi_k\|^2_{H_{1,k}}  : \varphi_k\in H_{0,k}, \psi\in H_{1,k},\varphi_k + \psi_k = u_k\right\}\\
        \end{split}
    \end{equation*}
    The last equality is the sum of the $K$ functions associated with the interpolation $[H_{0,k},H_{1,k}]$. It is therefore clear that $u=\sum_{k=1}^nu_k$ with $u_k\in H_{0,k}$ belonging to $$\left[\bigoplus_{k=1}^n H_{0,k}, \bigoplus_{k=1}^n H_{1,k}\right]_{\alpha}$$ if and only if $u_k$ belong to $\left[H_{0,k},H_{1,k}\right]_{\alpha}$ for every $k\in\{1,\dots,n\}$.
\end{proof}

\begin{prop}\label{prop:H_adjoint_characterizaton_1}
Let $\alpha\in[0,1]$ and let $\eta$ be a parameterization of $\Gamma$. Then
    \begin{equation}\label{eq:adjoint_H_alpha_1}
        H^{\alpha}_{*,\eta}(\Gamma)\cong \bigoplus_{e\in\mathcal{E}}H^{\alpha}(e)
    \end{equation}
\end{prop}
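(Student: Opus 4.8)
The plan is to reduce the entire statement to the single endpoint identification \eqref{eq:H_dual_ident_1} together with the abstract interpolation Lemma~\ref{lem:direct_sum_interpolation_lemma}. The conceptual point is that, unlike $H^{\alpha}(\Gamma)$, the adjoint spaces impose \emph{no} vertex coupling in the range $[0,1]$: indeed, in Definition~\ref{def:sobolev_H_def_2}(ii) the matching conditions on odd-order derivatives are indexed by $m\in\{1,\dots,\floor{k/2}\}$, and for $k=1$ this set is empty since $\floor{1/2}=0$. Hence interpolation should commute with the edgewise direct sum without obstruction. I would first dispose of the two endpoint cases. For $\alpha=0$ both sides equal $\mathbb{L}_2(\Gamma)=\bigoplus_{e\in\mathcal{E}}\mathbb{L}_2(e)=\bigoplus_{e\in\mathcal{E}}H^0(e)$ by convention, and for $\alpha=1$ the claim is precisely \eqref{eq:H_dual_ident_1}.

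For the interior case $\alpha\in(0,1)$ I would unfold the defining interpolation
$$H^{\alpha}_{*,\eta}(\Gamma)=[\mathbb{L}_2(\Gamma),H^1_{*,\eta}(\Gamma)]_{\alpha},$$
and rewrite each endpoint of the interpolation couple as a finite direct sum over edges: the base point is $\mathbb{L}_2(\Gamma)=\bigoplus_{e\in\mathcal{E}}\mathbb{L}_2(e)=\bigoplus_{e\in\mathcal{E}}H^0(e)$, and the top point is $H^1_{*,\eta}(\Gamma)\cong\bigoplus_{e\in\mathcal{E}}H^1(e)$ by \eqref{eq:H_dual_ident_1}. With both endpoints expressed as finite direct sums and the continuous embeddings $H^1(e)\hookrightarrow\mathbb{L}_2(e)$ holding edgewise, Lemma~\ref{lem:direct_sum_interpolation_lemma} applies with $H_{0,e}=\mathbb{L}_2(e)$, $H_{1,e}=H^1(e)$, and ambient space $H=\mathbb{L}_2(\Gamma)$, yielding
$$[\mathbb{L}_2(\Gamma),H^1_{*,\eta}(\Gamma)]_{\alpha}\cong\bigoplus_{e\in\mathcal{E}}[\mathbb{L}_2(e),H^1(e)]_{\alpha}.$$

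To finish, I would invoke the edgewise interpolation identity recorded after the definition of $H^{\alpha}(e)$ (a consequence of \citeA[Theorem~B.8]{mclean}), which for $\alpha\in(0,1)$ reads $H^{\alpha}(e)\cong[H^0(e),H^1(e)]_{\alpha}=[\mathbb{L}_2(e),H^1(e)]_{\alpha}$, and chain the three isomorphisms to obtain $H^{\alpha}_{*,\eta}(\Gamma)\cong\bigoplus_{e\in\mathcal{E}}H^{\alpha}(e)$.

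There is no genuinely hard step here: the result is essentially a bookkeeping exercise in interpolation theory, and the only point requiring care is verifying that the hypotheses of Lemma~\ref{lem:direct_sum_interpolation_lemma} are met. In particular one must confirm that the couple $(\mathbb{L}_2(\Gamma),H^1_{*,\eta}(\Gamma))$ is admissible, i.e.\ that both spaces embed continuously into the common ambient space $\mathbb{L}_2(\Gamma)$; this is immediate since $H^1_{*,\eta}(\Gamma)\subset\bigoplus_{e\in\mathcal{E}}H^1(e)\hookrightarrow\mathbb{L}_2(\Gamma)$. I would also note in passing that the independence of the right-hand side from the choice of parameterization $\eta$ is automatic, as the edgewise spaces $H^{\alpha}(e)$ do not depend on $\eta$.
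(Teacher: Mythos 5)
Your proposal is correct and follows essentially the same route as the paper: the paper's proof likewise applies Lemma~\ref{lem:direct_sum_interpolation_lemma} to the endpoint identifications $\mathbb{L}_2(\Gamma)\cong\bigoplus_{e\in\mathcal{E}}\mathbb{L}_2(e)$ and $H^{1}_{*,\eta}(\Gamma)\cong\bigoplus_{e\in\mathcal{E}}H^{1}(e)$ from \eqref{eq:H_dual_ident_1}, with the edgewise identity $H^{\alpha}(e)\cong[\mathbb{L}_2(e),H^1(e)]_{\alpha}$ closing the argument. Your explicit treatment of the endpoints $\alpha\in\{0,1\}$ and the admissibility check for the interpolation couple are details the paper leaves implicit, but they do not constitute a different approach.
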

\begin{proof}
    Recalling the identification \eqref{eq:H_dual_ident_1}, \eqref{eq:adjoint_H_alpha_1} follows directly from Lemma~\ref{lem:direct_sum_interpolation_lemma} applied to $$\mathbb{L}_2(\Gamma)\cong\bigoplus_{e\in\mathcal{E}}\mathbb{L}_2(e),\quad H^{1}_{*,\eta}(\Gamma)\cong\bigoplus_{e\in\mathcal{E}}H^{1}(e).$$
\end{proof}

\begin{theorem}\label{thm:H_W_equivalence}
    Let $p>0$ and $\alpha\in(0,\infty)\backslash\{2k+\frac{1}{2} : k\in\mathbb{N}\}$. We have 
    $H^{\alpha}(\Gamma)\cong W^{\alpha,2}(\Gamma)$.
\end{theorem}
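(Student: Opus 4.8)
The plan is to prove $H^{\alpha}(\Gamma)\cong W^{\alpha,2}(\Gamma)$ by induction on $\floor{\alpha}$, reducing at each stage to the already-established characterization of the $W$ spaces (Theorem~\ref{thm:W_chatacterization_3}) and to the interpolation identity for the adjoint spaces (Proposition~\ref{prop:H_adjoint_characterizaton_1}). The key structural fact I would exploit is that both families are built up from edge-wise fractional Sobolev spaces $H^{\alpha}(e)=W^{\alpha,2}(e)$ (these coincide classically on intervals) subject to matching conditions at vertices: $H^{\alpha}(\Gamma)$ imposes continuity of even-order \emph{lateral} derivatives via Definition~\ref{def:sobolev_H_def_1}/\ref{def:sobolev_H_def_2}, whereas $W^{\alpha,2}(\Gamma)$ imposes continuity of even-order derivatives as functions on $\Gamma$ up to the critical order $2\floor{\frac{\alpha}{2}-\frac{1}{4}}$. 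The entire argument amounts to showing these two sets of vertex constraints cut out the same subspace with equivalent norms, and that the interpolation-based fractional norm agrees with the Sobolev-Slobodeckij norm.

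\textbf{Base case $\alpha\in(0,1)$.} Here I would compare $H^{\alpha}(\Gamma)=[\mathbb{L}_2(\Gamma),H^1(\Gamma)]_{\alpha}$ with $W^{\alpha,2}(\Gamma)$ directly. Using \eqref{eq:H_1_ident_1}, $H^1(\Gamma)\cong\bigoplus_{e}H^1(e)\cap C(\Gamma)$, so I must compute the interpolation space between $\bigoplus_e\mathbb{L}_2(e)$ and $\bigoplus_e H^1(e)\cap C(\Gamma)$. For $\alpha<\frac12$ the continuity constraint disappears under interpolation (reflecting Theorem~\ref{thm:W_characterization_1}), while for $\alpha>\frac12$ it survives (reflecting Theorem~\ref{thm:W_characterization_2}); in both regimes the resulting space matches the characterization of $W^{\alpha,2}(\Gamma)$ from Theorem~\ref{thm:W_chatacterization_3} with $p=2$. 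The half-integer value $\alpha=\frac12$ is excluded by hypothesis, which is exactly the threshold where the trace/continuity behavior changes, so no boundary case needs handling.

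\textbf{Inductive step.} Suppose the equivalence holds for all orders below $\floor{\alpha}$. I split on the parity of $\floor{\alpha}$, matching the case split in the definition of $H^{\alpha}(\Gamma)$. If $\floor{\alpha}$ is even, then $D^{\floor{\alpha}}\varphi$ is parameterization-independent, and $\varphi\in H^{\alpha}(\Gamma)$ iff $D^{\floor{\alpha}}\varphi\in H^{\alpha-\floor{\alpha}}(\Gamma)$ (fractional part in $(0,1)$), which by the base case is $W^{\alpha-\floor{\alpha},2}(\Gamma)$; comparing with Definition~\ref{def:W_Gamma_1} and the norm decomposition yields the claim. If $\floor{\alpha}$ is odd, I invoke the duality relation $\varphi\in H^{k}(\Gamma)\Rightarrow D_{\eta}\varphi\in H^{k-1}_{*,\eta}(\Gamma)$ together with Proposition~\ref{prop:H_adjoint_characterizaton_1}, so that $D^{\floor{\alpha}}_{\eta}\varphi\in H^{\alpha-\floor{\alpha}}_{*,\eta}(\Gamma)\cong\bigoplus_e H^{\alpha-\floor{\alpha}}(e)$ carries no vertex constraint on the top (odd) derivative, precisely mirroring the absence of a continuity condition on $D^{\floor{\alpha}}$ in $W^{\alpha,2}(\Gamma)$. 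The even-order continuity conditions are then supplied by $\varphi\in H^{\floor{\alpha}}(\Gamma)$, which by induction equals $W^{\floor{\alpha},2}(\Gamma)$.

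\textbf{Main obstacle.} I expect the crux to be the base-case identification of the interpolation space $[\mathbb{L}_2(\Gamma),H^1(\Gamma)]_{\alpha}$ with the Sobolev-Slobodeckij space, specifically showing that the global continuity constraint in $H^1(\Gamma)$ interacts correctly with interpolation across the threshold $\alpha=\frac12$. The clean decomposition of Lemma~\ref{lem:direct_sum_interpolation_lemma} applies directly only to the adjoint spaces $H^{\alpha}_{*,\eta}$, where no coupling between edges occurs; for $H^{\alpha}(\Gamma)$ itself the continuity condition couples the edges at vertices, so interpolation does not commute naively with the direct sum. The resolution I anticipate is to realize $H^1(\Gamma)$ as the kernel of a bounded ``jump'' functional (the difference of boundary traces at each vertex) and to transfer the interpolation computation through this constraint, using the retraction/co-retraction structure and the fact that the trace map $H^1(e)\to\mathbb{R}$ at an endpoint is bounded precisely when $\alpha>\frac12$; this is where the half-integer exclusions enter decisively and where the equivalence of norms, rather than mere equality of sets, must be checked carefully.
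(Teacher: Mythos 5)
Your treatment of the non-integer cases $\alpha>1$ is essentially the paper's proof: the same parity split on $\floor{\alpha}$, with the odd case handled through $D^{\floor{\alpha}}_{\eta}\varphi\in H^{\alpha-\floor{\alpha}}_{*,\eta}(\Gamma)\cong\bigoplus_{e\in\mathcal{E}}H^{\alpha-\floor{\alpha}}(e)$ via Proposition~\ref{prop:H_adjoint_characterizaton_1}, the even case reduced to the base case, and both matched against Definition~\ref{def:W_Gamma_1}. Where you genuinely diverge is the base case $\alpha\in(0,1)$: the paper does not compute $[\mathbb{L}_2(\Gamma),H^1(\Gamma)]_{\alpha}$ from scratch, but instead cites \citeA[Theorem~4.1]{mathcomp_paper} for the characterizations $H^{\alpha}(\Gamma)\cong\bigoplus_{e\in\mathcal{E}}H^{\alpha}(e)$ for $\alpha\in(0,\frac{1}{2})$ and $H^{\alpha}(\Gamma)\cong\bigoplus_{e\in\mathcal{E}}H^{\alpha}(e)\cap C(\Gamma)$ for $\alpha\in(\frac{1}{2},1]$, and then invokes Theorem~\ref{thm:W_chatacterization_3}. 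Your plan --- realizing $H^1(\Gamma)$ as the kernel of the bounded trace-jump functionals inside $\bigoplus_{e\in\mathcal{E}}H^1(e)$ and interpolating through that constraint, with the trace map's boundedness threshold at $\alpha=\frac{1}{2}$ explaining the exclusions --- is exactly the content hiding behind that citation; carried out (e.g., via interpolation-of-subspaces results), it buys a self-contained proof at the cost of reproving known material, while the paper buys brevity by citation. You correctly identified the crux: Lemma~\ref{lem:direct_sum_interpolation_lemma} alone cannot handle $H^{\alpha}(\Gamma)$ because the continuity constraint couples edges.

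One concrete gap: your induction, organized around a fractional part in $(0,1)$, silently skips $\alpha\in\mathbb{N}$, yet your inductive step uses ``$H^{\floor{\alpha}}(\Gamma)$, which by induction equals $W^{\floor{\alpha},2}(\Gamma)$'' --- an integer-order statement your base case does not cover. The integer case is not purely definitional: comparing Definitions~\ref{def:sobolev_H_def_1}/\ref{def:sobolev_H_def_2} with Definition~\ref{def:W_Gamma_1} requires that even-order derivatives of edge-wise $H^{k}$ functions have well-defined vertex values so that ``lateral derivatives agree at vertices'' can be upgraded to ``$D^{2j}\varphi$ is continuous on $\Gamma$.'' The paper supplies this by the edge-wise equivalence $H^{k}(e)\cong W^{k,2}(e)$ together with Theorem~\ref{thm:W_imbedding_1} applied to each edge viewed as a metric graph, yielding $H^{k}(\Gamma)\cong\bigoplus_{e\in\mathcal{E}}W^{k,2}(e)\cap C^{\left(2\floor{\frac{k-1}{2}}\right)}(\Gamma)\cong W^{k,2}(\Gamma)$. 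You should add this integer-order step explicitly before your parity induction can close.
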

\begin{proof}
    By standard results (\citeA[Theorem~3.18]{mclean} and \citeA[Lemma~4.2, Theorem~4.6]{chandler}) we have $H^{\alpha}(e)\cong W^{\alpha,2}(e)$ for each edge $e$ and $\alpha>0$. By \citeA[Theorem~4.1]{mathcomp_paper} we have the characterizations 
    $$H^{\alpha}(\Gamma)\cong\bigoplus_{e\in\mathcal{E}} H^{\alpha}(e),\quad \text{for}\;\alpha\in(0,1/2),$$
    $$H^{\alpha}(\Gamma)\cong\bigoplus_{e\in\mathcal{E}} H^{\alpha}(e) \cap C(\Gamma),\quad \text{for}\;\alpha\in(1/2,1].$$ 
    Therefore, by combining the above facts with Theorem~\ref{thm:W_chatacterization_3} we have $$H^{\alpha}(\Gamma)\cong W^{\alpha,2}(\Gamma).\quad \text{for}\;\alpha\in(0,\frac{1}{2})\cup(\frac{1}{2},1].$$
    If $\alpha\in\mathbb{N}$, then by the definitions of $H^{\alpha}(\Gamma)$ and $W^{\alpha,2}(\Gamma)$, the fact that $H^{\alpha}(e)\cong W^{\alpha,2}(e)$ for every edge $e$, and Theorem~\ref{thm:W_imbedding_1} (applied to the metric graph $e$), we have 
    \begin{equation*}
    \begin{split}
        H^{\alpha}(\Gamma)&\cong \bigoplus_{e\in\mathcal{E}}H^{\alpha}(e)\cap C^{\left(2\floor{\frac{\alpha-1}{2}}\right)}(\Gamma)\cong \bigoplus_{e\in\mathcal{E}}W^{\alpha,2}(e)\cap C^{\left(2\floor{\frac{\alpha-1}{2}}\right)}(\Gamma)\\&\cong W^{\alpha,2}(\Gamma).
    \end{split}
    \end{equation*}
    Let $\alpha\in(0,\infty)\backslash\left(\mathbb{N}\cup\{2k+\frac{1}{2}:k\in\mathbb{N}_0\}\right)$. If $\floor{\alpha}$ is odd, then $\varphi\in H^{\alpha}(\Gamma)$ is equivalent to
    $\varphi\in H^{\floor{\alpha}}(\Gamma)\cong W^{\floor{\alpha},2}(\Gamma)$ and $D^{\floor{\alpha}}\varphi\in H^{\alpha-\floor{\alpha}}_{*,\eta}(\Gamma)\cong \bigoplus_{e\in \mathcal{E}}H^{\alpha-\floor{\alpha}}(e) \cong \bigoplus_{e\in \mathcal{E}}W^{\alpha-\floor{\alpha},2}(e)$ for any parameterization $\eta$ by using Proposition~\ref{prop:H_adjoint_characterizaton_1}. These conditions coincide with the definition of $W^{\alpha,p}(\Gamma)$ when $\floor{\alpha}$ is odd, thus $H^{\alpha}(\Gamma)= W^{\alpha,2}(\Gamma)$, and the above identifications further yield equivalence of the respective norms. Consequently,
    $H^{\alpha}(\Gamma)\cong W^{\alpha,2}(\Gamma)$.
    Now suppose $\floor{\alpha}$ is even. Then $\varphi\in H^{\alpha}(\Gamma)$ if and only if $\varphi\in H^{\floor{\alpha}}(\Gamma)\cong W^{\floor{\alpha}}(\Gamma)$ and $D^{\floor{\alpha}}\varphi\in H^{\alpha-\floor{\alpha}}(\Gamma)\cong W^{\alpha-\floor{\alpha}}(\Gamma)$, these conditions on $\varphi$ are equivalent to the definition of $W^{\alpha,2}(\Gamma)$, and similarly to the previous case we have
    $H^{\alpha}(\Gamma)\cong W^{\alpha,2}(\Gamma)$.
\end{proof}

\section{Bounds for the Eigenfunctions of the Laplacian}
\label{sec:eigen_bounds}
In this section, we present uniform bounds for the eigenfunctions of a class of Laplacians on metric graphs; these bounds will help apply the Sobolev spaces developed in the previous sections. The result of this section is of individual importance, although we will use it as an auxiliary result in this article. Consider the Laplacian $\Delta_{\Gamma}$ on the metric graph $\Gamma$ defined by $\Delta_{\Gamma} = \bigoplus_{e\in\mathcal{E}}\Delta_e$ where $\Delta_eu=(\partial^2 (u\circ\zeta))\circ\zeta^{-1}$ for every $u\in H^2(e)$, where $\zeta$ is a natural parameterization of $e$. This means that for every $x\in e$ we define $\Delta_{\Gamma}u(x)=\Delta_eu(x)$. We couple $\Delta_{\Gamma}$ with vertex boundary conditions that makes $(\Delta_{\Gamma},\mathcal{D}(\Delta_{\Gamma})):\mathcal{D}(\Delta_{\Gamma})\subset\mathbb{L}_2(\Gamma)\to\mathbb{L}_2(\Gamma)$ a self-adjoint operator on $\mathbb{L}_2(\Gamma)$, where $\mathcal{D}(\Delta_{\Gamma})$ is the domain of definition of $\Delta_{\Gamma}$ under these boundary conditions. We further require
\begin{itemize}
    \item $(\Delta_{\Gamma},\mathcal{D}(\Delta_{\Gamma}))$ has a compact resolvent.
    \item There exists $\rho > 0$ such that the operator $\rho-\Delta_{\Gamma}$ is positive.
\end{itemize}
These assumptions are satisfied, for instance, by a mixture of the following standard vertex conditions imposed at each vertex $v$
\begin{itemize}
    \item Kirchhoff condition $\sum_{(e,\zeta)\in\textrm{Dir}(v)}\partial_{e,\zeta}u(v)=0$.
    \item Dirichlet condition $u(v)=0$.
    \item Generalized Kirchhoff condition $\sum_{(e,\zeta)\in\textrm{Dir}(v)}\partial_{e,\zeta}u(v)=\alpha u(v)$,\\ where $\alpha\in\mathbb{R}$, under appropriate assumptions \cite{mathcomp_paper}.
\end{itemize}

Under these requirements \citeA[Theorem~1.7.16 and Theorem~2.6.8]{miklavcic} guarantee the existence of two sequences: a sequence of eigenfunctions $\{\varphi_i\}_{i\in\mathbb{N}}$ in $\mathcal{D}(\Delta_{\Gamma})$ that constitute an orthonormal basis for $\mathbb{L}_2(\Gamma)$, and a non-negative sequence of corresponding eigenvalues $\{\lambda_i\}_{i\in\mathbb{N}}$ that satisfy $\Delta_{\Gamma}\varphi_i=-\lambda_i\varphi$ and $\lim_{i\to\infty}\lambda_i=\infty$.

\begin{lemma}\label{lem:smooth_eigenfunctions}
    Let $\delta>0$, $\gamma:(-\delta,\delta)\to\Gamma$ be a local isometry, and $\varphi$ be an eigenfunction  $-\Delta_{\Gamma}$ with eigenvalue $\lambda$. The function $\tilde{\varphi}=\varphi\circ\gamma$ satisfies $\Delta\tilde{\varphi} = -\lambda \tilde{\varphi}$ on $(-\delta,\delta)\backslash\gamma^{-1}(\mathcal{V})$ and $\tilde{\varphi}|_{(-\delta,\delta)\backslash \gamma^{-1}(\mathcal{V})}\in C^{\infty}((-\delta,\delta)\backslash \gamma^{-1}(\mathcal{V}))$. Moreover, if $t\in(-\delta,\delta)$ such that $\gamma(t)\in\mathcal{V}$ and satisfies $\partial_{e_1}\varphi(\gamma(t)) = -\partial_{e_2}\varphi(\gamma(t))$ where $e_1,e_2$ are defined by
    $$\{e_1,e_2\} = \bigcap_{s>0}\{e\in\mathcal{E} : e\cap\gamma((t-s,t+s))\not=\varnothing\},$$
    then $\Delta\tilde{\varphi} = -\lambda \tilde{\varphi}$ holds at $t$ and the function $\tilde{\varphi}$ is $C^{\infty}$ in a neighborhood of $t$.
\end{lemma}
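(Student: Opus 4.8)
The plan is to reduce everything to the one-dimensional constant-coefficient ODE $f'' = -\lambda f$ and to invoke uniqueness of its initial value problem. First I would record the edgewise regularity of $\varphi$: since $\varphi \in \mathcal{D}(\Delta_{\Gamma}) \subset \bigoplus_{e\in\mathcal{E}} H^2(e)$ and $-\Delta_{\Gamma}\varphi = \lambda\varphi$, on each edge $e$ the restriction $\varphi|_e$ is a weak, hence (after a one-step bootstrap using $H^2(e)\hookrightarrow C^1(e)$) a classical, solution of $-(\varphi\circ\zeta)'' = \lambda\,(\varphi\circ\zeta)$ for a natural parameterization $\zeta$. As this is a constant-coefficient linear ODE, $\varphi\circ\zeta$ extends to a $C^{\infty}$ (indeed analytic) function on the closed interval $[0,l_e]$. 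Moreover $\varphi\in H^2(\Gamma)\subset C(\Gamma)$, so $\varphi$ is continuous at the vertices.

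For the first assertion, fix $t$ with $\gamma(t)\notin\mathcal{V}$. Then $\gamma$ maps a neighborhood of $t$ isometrically onto an arc in the interior of a single edge $e$, so locally $\gamma = \zeta\circ\iota$ for a natural parameterization $\zeta$ of $e$ and an affine isometry $\iota(r)=\pm r + c$ of the line. Since the second derivative is invariant under such reparameterizations, $\tilde{\varphi}=\varphi\circ\gamma$ satisfies $\tilde{\varphi}'' = (\varphi\circ\zeta)''\circ\iota = -\lambda\,\tilde{\varphi}$ there, and $\tilde{\varphi}$ is $C^{\infty}$ because $\varphi\circ\zeta$ is. This proves $\Delta\tilde{\varphi}=-\lambda\tilde{\varphi}$ on $(-\delta,\delta)\setminus\gamma^{-1}(\mathcal{V})$ together with smoothness away from the vertex preimages.

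For the second assertion, assume $t=0$ after translating, with $v=\gamma(0)\in\mathcal{V}$. Shrinking $\delta$, the local isometry $\gamma$ traverses $e_1$ for $r\in(-\delta,0]$ and $e_2$ for $r\in[0,\delta)$, so that $r\mapsto\gamma(-r)$ and $r\mapsto\gamma(r)$ are natural parameterizations of $e_1$ and $e_2$ sending $0$ to $v$. By the edgewise regularity above, $\tilde{\varphi}|_{(-\delta,0]}$ and $\tilde{\varphi}|_{[0,\delta)}$ are restrictions of $C^{\infty}$ functions $f_-$ and $f_+$, each solving $f''=-\lambda f$ on a full neighborhood of $0$. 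Continuity of $\varphi$ at $v$ gives $f_-(0)=f_+(0)=\varphi(v)$. Computing the one-sided derivatives from the definitions of the inward derivatives yields $f_+'(0)=\partial_+\tilde{\varphi}(0)=\partial_{e_2}\varphi(v)$ and $f_-'(0)=\partial_-\tilde{\varphi}(0)=-\partial_{e_1}\varphi(v)$; the hypothesis $\partial_{e_1}\varphi(v)=-\partial_{e_2}\varphi(v)$ is therefore exactly the matching $f_+'(0)=f_-'(0)$.

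Finally, $f_-$ and $f_+$ solve the same initial value problem for $f''=-\lambda f$ with common data $(f(0),f'(0))$, so by uniqueness $f_-\equiv f_+$ on a neighborhood of $0$. Hence $\tilde{\varphi}$ agrees near $0$ with a single $C^{\infty}$ solution of the ODE, which yields both $\Delta\tilde{\varphi}(0)=-\lambda\tilde{\varphi}(0)$ and $\tilde{\varphi}\in C^{\infty}$ near $0$. I expect the only genuinely delicate step to be the bookkeeping of the directional-derivative conventions, namely translating the vertex jump condition $\partial_{e_1}\varphi(v)=-\partial_{e_2}\varphi(v)$ into the two-sided $C^1$-matching $f_-'(0)=f_+'(0)$; once this orientation accounting is pinned down, ODE uniqueness does the rest.
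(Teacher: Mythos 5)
Your proof is correct and takes essentially the same route as the paper's: localize through the (local) isometry to an interval, noting the second derivative is invariant under the orientation flip $r\mapsto \pm r+c$; get edgewise smoothness of the eigenfunction (you bootstrap the constant-coefficient ODE, the paper instead cites interior elliptic regularity from Evans and Gilbarg--Trudinger, same substance); and translate the vertex condition $\partial_{e_1}\varphi(v)=-\partial_{e_2}\varphi(v)$ into the two-sided matching $\partial_+\tilde{\varphi}(0)=\partial_-\tilde{\varphi}(0)$, after which your appeal to uniqueness for the initial value problem of $f''=-\lambda f$ is a cleaner packaging of the paper's direct computation of the matching lateral second derivatives (and it also covers the loop case $e_1=e_2$, which the paper handles by remark). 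The one blemish is the step ``$\varphi\in H^{2}(\Gamma)\subset C(\Gamma)$'': in the generality of Section~4 the domain $\mathcal{D}(\Delta_{\Gamma})$ is only assumed to yield a self-adjoint operator with compact resolvent and need not embed into $H^{2}(\Gamma)$ (continuity at vertices is not a standing assumption), but this is harmless here because the matching $f_-(0)=f_+(0)=\varphi(v)$ is already forced by the lemma's hypothesis: the existence of the inward derivatives $\partial_{e_i}\varphi(v)$, defined via the difference quotients $\bigl(\varphi(\zeta(h))-\varphi(v)\bigr)/h$, presupposes that both one-sided limits of $\tilde{\varphi}$ at $0$ equal $\varphi(v)$.
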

\begin{proof}
    Without loss of generality, assume $\gamma$ is an isometry.
    Noting that the set $\gamma^{-1}(\mathcal{V})$ is finite, we just prove that $\Delta \tilde{\varphi} =-\lambda\tilde{\varphi}$ on $(-\delta,\delta)\backslash \gamma^{-1}(\mathcal{V})$ as the smoothness result will follow from \citeA[Theorem~3]{evans} (Section 6.3). Let $t\in(-\delta,\delta)\backslash\gamma^{-1}(\mathcal{V})$, and let $\epsilon>0$ such that $B_{2\epsilon}(\gamma(t))\cap\mathcal{V}=\varnothing$, where $B_{r}(x)=\{y\in\Gamma: d(x,y)<r\}$. Let $\zeta:B_{2\epsilon}(\gamma(t))\to (-\epsilon,\epsilon) $ be an isometry that satisfies $\zeta(x)=0$. By the definition of $\Delta_{\Gamma}$ it is immediate that
    $$(\Delta_{\Gamma}\varphi)(\zeta^{-1}(y)) = \Delta (\varphi\circ\zeta^{-1})(y)$$
    for all $y\in(-\epsilon,\epsilon)$, hence $\Delta (\varphi\circ\zeta^{-1}) = -\lambda (\varphi\circ\zeta^{-1})$. Because both $s\mapsto\gamma(t+s)$ and $\zeta^{-1}$ are isometries on $(-\epsilon,\epsilon)$ either $\gamma|_{(-\epsilon,\epsilon)}=\zeta^{-1}$ or $\gamma|_{(-\epsilon,\epsilon)}(t+s)=\zeta^{-1}(-s)$ for all $s\in(-\epsilon,\epsilon)$. Substituting, we have $$\Delta(\varphi\circ\gamma)(t) = -\lambda(\varphi\circ\gamma)(t)$$
    on $(-\epsilon,\epsilon)$. To show the second part let $t\in(-\delta,\delta)$ such that $\gamma(t)\in\mathcal{V}$, notice that the degree of $\gamma(t)$ is at least $2$. Let $\epsilon>0$ such that $B_{2\epsilon}(\gamma(t))\cap\mathcal{V}=\{\gamma(t)\}$. and let $\zeta_1:e_1\to[0,l_1]$ and $\zeta_2:e_2\to[0,l_2]$ be different parameterizations that satisfy $\zeta_1(\gamma(0))=\zeta_2(\gamma(0))=0$. Define the map
    $$\zeta(y)=\begin{cases}
        -\zeta_1(y)\quad, y\in e_1,\\
        \zeta_2(y)\quad, y\in e_2.
    \end{cases}$$
    Similar to the argument above, it is enough to show that the result for $\zeta$ instead of $\gamma$. Suppose we have $\partial_{e_1}\varphi(\zeta^{-1}(0)) = -\partial_{e_2}\varphi(\zeta^{-1}(0))$, by the definition of inward derivatives, this condition is equivalent to 
    \begin{equation}\label{eq:lateral_equal_1}
        \partial_+ (\varphi\circ\zeta^{-1})(0) = \partial_- (\varphi\circ\zeta^{-1})(0).
    \end{equation}
    Thus $\varphi\circ\zeta^{-1}$ is differentiable. By \citeA[Theorem~8.14]{gilbarg_trudinger} we have that 
    $\varphi\circ\zeta^{-1}\in C^{\infty}([0,l_{e_2}])\cap C^{\infty}([-l_{e_1},0])$.
    Therefore, the value of $(\Delta_{\Gamma}\varphi)(\zeta^{-1}(0))$ is well-defined, which is understood (by definition) as lateral derivatives, more precisely
    \begin{equation*}
        \begin{split}
            &\mathrel{\hphantom{=}}\lim_{s\to0^+}\frac{\partial(\varphi\circ\zeta^{-1})(s) - \partial_+(\varphi\circ\zeta^{-1})(0)}{s} \\
            &= \lim_{s\to0^-}\frac{\partial(\varphi\circ\zeta^{-1})(s) - \partial_-(\varphi\circ\zeta^{-1})(0)}{s}.
        \end{split}
    \end{equation*}
    Because of the equality \eqref{eq:lateral_equal_1} we have that $\varphi\circ\zeta^{-1}$ is twice differentiable and its derivative is equal to $\Delta_{\Gamma}\varphi(\zeta^{-1}(0))$. Therefore, we have the equality
    $$\Delta (\varphi\circ\zeta^{-1})(0) = (\Delta_{\Gamma}\varphi)(\zeta^{-1}(0)) = -\lambda (\varphi)(\zeta^{-1}(0)) = -\lambda (\varphi\circ\zeta^{-1})(0).$$
\end{proof}

\begin{remark}
    The loop edge case $e_1=e_2$ is included in the proof of Lemma~\ref{lem:smooth_eigenfunctions}.
\end{remark}
The following result is the main result of the section.
\begin{theorem}\label{thm:bounded_eigenfunctions}
    The eigenfunctions $\{\varphi_i\}_{i\in\mathbb{N}}$ of $\Delta_{\Gamma}$ are uniformly bounded:
    $$\sup_{i\in\mathbb{N}}\sup_{x\in\Gamma}|\varphi_i(x)| < \infty.$$
\end{theorem}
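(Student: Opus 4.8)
The plan is to exploit the fact that on each edge an eigenfunction is an explicit trigonometric solution of a constant-coefficient ODE, so that its amplitude---and hence its supremum on that edge---is controlled by its $\mathbb{L}_2$-norm on the edge, with a constant that becomes uniform once the eigenvalue is large. Combined with the normalization $\|\varphi_i\|_{\mathbb{L}_2(\Gamma)}=1$ and the fact that $\lambda_i\to\infty$, this yields the claimed bound. I note at the outset that a naive application of the Sobolev embedding $W^{2,2}(\Gamma)\hookrightarrow C(\Gamma)$ (Theorem~\ref{thm:W_imbedding_1}) is \emph{not} sufficient: it gives $\|\varphi_i\|_{\mathbb{L}_\infty(\Gamma)}\le C\|\varphi_i\|_{W^{2,2}(\Gamma)}$, but $\|\varphi_i\|_{W^{2,2}(\Gamma)}\ge\|D^2\varphi_i\|_{\mathbb{L}_2(\Gamma)}=\lambda_i\to\infty$. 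The uniformity must therefore come from the special structure of eigenfunctions, not from a generic embedding.

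First I would fix an edge $e$, a natural parameterization $\zeta_e:[0,l_e]\to e$, and set $u_e=\varphi_i\circ\zeta_e$. By the definition of $\Delta_\Gamma$, the eigenvalue equation $\Delta_\Gamma\varphi_i=-\lambda_i\varphi_i$ restricts to $u_e''=-\lambda_i u_e$ on $(0,l_e)$, so for $\lambda_i>0$ one has $u_e(t)=A_e\cos(\omega t)+B_e\sin(\omega t)$ with $\omega=\sqrt{\lambda_i}$, whence $\|\varphi_i\|_{\mathbb{L}_\infty(e)}=\|u_e\|_{\mathbb{L}_\infty(0,l_e)}\le R_e:=(A_e^2+B_e^2)^{1/2}$. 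A direct computation of $\int_0^{l_e}u_e^2\,dt$ gives $\tfrac{l_e}{2}R_e^2$ plus cross terms of size $O(R_e^2/\omega)$, so that $\int_0^{l_e}u_e^2\,dt\ge R_e^2\big(\tfrac{l_e}{2}-\tfrac{C_0}{\omega}\big)$ for an absolute constant $C_0$. Writing $l_{\min}=\min_{e\in\mathcal{E}}l_e>0$, for all $\omega$ with $\tfrac{C_0}{\omega}\le\tfrac{l_{\min}}{4}$ this gives $\tfrac{l_e}{2}-\tfrac{C_0}{\omega}\ge\tfrac{l_e}{4}$ and hence $R_e^2\le\tfrac{4}{l_e}\|\varphi_i\|_{\mathbb{L}_2(e)}^2\le\tfrac{4}{l_{\min}}\|\varphi_i\|_{\mathbb{L}_2(e)}^2$.

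Taking the maximum over the finitely many edges and using $\|\varphi_i\|_{\mathbb{L}_\infty(\Gamma)}=\max_{e}\|\varphi_i\|_{\mathbb{L}_\infty(e)}$ together with $\sum_e\|\varphi_i\|_{\mathbb{L}_2(e)}^2=\|\varphi_i\|_{\mathbb{L}_2(\Gamma)}^2=1$, I would conclude that there is a threshold $\Lambda_0=16C_0^2/l_{\min}^2$ such that $\|\varphi_i\|_{\mathbb{L}_\infty(\Gamma)}\le 2/\sqrt{l_{\min}}$ for every $i$ with $\lambda_i\ge\Lambda_0$. Since $\lambda_i\to\infty$, only finitely many indices satisfy $\lambda_i<\Lambda_0$; each corresponding $\varphi_i$ lies in $\bigoplus_{e}C(e)$ (it is trigonometric, or affine when $\lambda_i=0$) and is therefore bounded on $\Gamma$, so the supremum over this finite set is finite. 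Taking the maximum of this finite bound with $2/\sqrt{l_{\min}}$ proves the theorem. I expect the only genuinely delicate point to be the uniform control of the $O(R_e^2/\omega)$ error term in the $\mathbb{L}_2$ computation---that is, verifying that the reverse estimate ``amplitude $\lesssim$ edge $\mathbb{L}_2$-norm'' holds with a constant independent of $\lambda_i$---while the small-eigenvalue regime is handled trivially by the finiteness just noted. It is worth remarking that this argument uses only the edgewise ODE and the $\mathbb{L}_2$-normalization; the vertex conditions enter solely through the existence of the orthonormal eigenbasis, not through the bound itself.
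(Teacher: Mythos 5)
Your proposal is correct, but it takes a genuinely different route from the paper. The paper argues locally: around each point it introduces a smooth cutoff $\eta$ supported in $(-\epsilon,\epsilon)$ and the fundamental solution $F(x)=\frac{1}{2\sqrt{\lambda_i+1}}\sin\left(\sqrt{\lambda_i+1}\,|x|\right)$ of $Q=\lambda_i+1+\Delta$, integrates by parts to obtain a representation of $\tilde{\varphi}_i(y)$ in terms of $\int \eta F\tilde{\varphi}_i$, $\int D\eta\, DF\,\tilde{\varphi}_i$ and $\int \Delta\eta\, F\tilde{\varphi}_i$, and then applies Cauchy--Schwarz together with the explicit decay $\|\mathbbm{1}_{(-\epsilon,\epsilon)}F\|_{\mathbb{L}_2}^2\leq \epsilon/(2(\lambda_i+1))$ and $\|\mathbbm{1}_{(-\epsilon,\epsilon)}DF\|_{\mathbb{L}_2}^2\leq\epsilon/2$, finishing by compactness of $\Gamma$; the finitely many indices with $\lambda_i\leq 0$ are handled separately, exactly as in your last step. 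You instead solve the edgewise ODE globally in closed form and compare the amplitude $R_e$ with the edge $\mathbb{L}_2$-norm; your flagged ``delicate point'' does check out: with $u_e=A_e\cos(\omega t)+B_e\sin(\omega t)$ one gets $\int_0^{l_e}u_e^2 = \tfrac{l_e}{2}R_e^2 + \tfrac{\sin(2\omega l_e)}{4\omega}(A_e^2-B_e^2)+A_eB_e\tfrac{1-\cos(2\omega l_e)}{2\omega}$, so the error is at most $\tfrac{3}{4\omega}R_e^2$ and $C_0=3/4$ is an absolute constant, making your threshold $\Lambda_0$ and the bound $2/\sqrt{l_{\min}}$ fully rigorous. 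Two small points you should make explicit: first, that $\varphi_i$ restricted to an edge is a classical (indeed smooth) solution of $u''=-\lambda_i u$, which is exactly the content of Lemma~\ref{lem:smooth_eigenfunctions} in the paper (or standard interior elliptic regularity for the weak solution in $H^2(e)$); second, that loop edges require no special treatment since your argument is entirely edgewise and never touches the vertex conditions, consistent with your closing remark. As for what each approach buys: yours is more elementary and produces explicit constants and an explicit eigenvalue threshold, and it is essentially the same closed-form device the paper itself uses later in Corollary~\ref{corr:eigenfunctions_derivative_bound}; the paper's localization argument never uses exact solvability of the ODE, so it is more robust---it would survive perturbations such as variable coefficients, where no trigonometric formula is available---at the cost of the cutoff-and-fundamental-solution machinery and a covering/compactness step that your global argument avoids entirely.
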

\begin{proof}
     Let $\mathcal{I}=\{i\in\mathbb{N}:\lambda_i>0\}$. It is enough to show the results for $i\in\mathcal{I}$ since by the compactness of $\Gamma$ and the finiteness of the set $\mathcal{I}$ we have
     $$\sup_{i\in\mathbb{N}\backslash\mathcal{I}}\sup_{x\in\Gamma}|\varphi_i(x)|<\infty.$$ 
     Let $0<m<\min_{e\in\mathcal{E}}l_e$, we will show that for every $x\in\Gamma$ there exists a neighborhood $\mathcal{N}$ of $x$ for which $$\sup_{y\in\mathcal{N}}\sup_{i\in\mathcal{I}}|\varphi_i(y)|<\infty.$$
     The desired result follows from the compactness of $\Gamma$ by choosing a finite subcover from these neighborhoods.\\
     Define $\mathcal{V}_1$ to be the set of vertices of degree $1$. First we consider the case $x\in\Gamma\backslash\mathcal{V}_1$ and $i\in\mathcal{I}$, let $0<\epsilon<m$ such that $B_{4\epsilon}(x)\cap\mathcal{V}\subset\{x\}$. Let $\zeta:B_{4\epsilon}(x)\to (-2\epsilon,2\epsilon) $ be an isometry that satisfies $\zeta(x)=0$. Define the one-sided natural maps $\zeta_1 = \zeta|_{\zeta^{-1}\left([0,2\epsilon)\right)}$ and $\zeta_2 = \zeta|_{\zeta^{-1}\left((2\epsilon,0]\right)}$. Denote $\tilde{\varphi}_i = \varphi_i\circ\zeta_1^{-1}$ (The argument below applies to $\zeta_2$ as well). By Lemma~\ref{lem:smooth_eigenfunctions} we have $\tilde{\varphi_i}\in C^{\infty}((0,2\epsilon))$ and $\Delta\tilde{\varphi_i}=-\lambda_i\tilde{\varphi}_i$ on $(0,2\epsilon)$. Let $\eta:\mathbb{R}\to\mathbb{R}_+$ be the non-negative $C^{\infty}_c(\mathbb{R})$ function defined by
    $$\eta(x) = e^{\frac{\epsilon^2}{x^2-\epsilon^2}+1}\mathbbm{1}_{(-\epsilon,\epsilon)(x)}.$$
    Define the operator $Q=\lambda_i+1 + \Delta$, the function $F$ defined by $F(x)=\frac{1}{2\sqrt{\lambda_i+1}}\sin(\sqrt{\lambda_i+1}|x|)$ is the fundamental solution of $Q$. Let $y\in(0,\epsilon/2)$, integrating by parts on the interval $(0,\epsilon)$ we have
    \begin{equation*}
        \begin{split}       &\mathrel{\hphantom{=}}\int_0^{\epsilon}\eta(z) F(z)\tilde{\varphi}_i(z+y)dz\\
        &=\int_0^{\epsilon}\eta(z) F(z) Q\tilde{\varphi}_i(z+y)dz\\
        &= \eta(0)F(0)D\tilde{\varphi}(y) + \left(D\eta(0)F(0) + \eta(0)\partial_+F(0)\right)\tilde{\varphi}(y)\\&+\int_{0}^{\epsilon} Q(\eta F)(z)\tilde{\varphi}_i(z+y)dz\\
        &=\frac{1}{2}\tilde{\varphi}(y)+\int_0^{\epsilon}\Delta\eta(z)F(z)\tilde{\varphi}_i(z+y)dz + 2\int_0^{\epsilon}D\eta(z)D F(z)\tilde{\varphi}_i(z+y)dz.
        \end{split}
    \end{equation*}
    Rearranging,
    \begin{equation*}
        \begin{split}
            \tilde{\varphi}_i(y) &=- 2\int_0^{\epsilon} \Delta\eta(z)F(z)\tilde{\varphi}_i(z+y)dz - 4\int_0^{\epsilon} D\eta(z)D F(z)\tilde{\varphi}_i(z+y)dz \\&+ 2\int_0^{\epsilon}\eta(z) F(z)\tilde{\varphi}_i(z+y)dz.
        \end{split}
    \end{equation*}
    From the definition of $\eta$ we see that $\eta(0)=1$, $\eta\leq 1$, $|D\eta|\leq C\frac{1}{\epsilon}$, and $|\Delta\eta|\leq C\frac{1}{\epsilon^2}$ where $C$ is independent of $\epsilon$.
    Using these aforementioned facts and the Cauchy-Schwarz inequality, we have
    \begin{equation}\label{eq:phi_bound_1}
        \begin{split}
            |\tilde{\varphi}_i(y)|&\leq 2C\frac{1}{\epsilon^2}\|\mathbbm{1}_{(-\epsilon,\epsilon)}F\|_{\mathbb{L}_2}\|\tilde{\varphi}_i\|_{\mathbb{L}_2} 
            +4C\frac{1}{\epsilon}\|\mathbbm{1}_{(-\epsilon,\epsilon)}D F\|_{\mathbb{L}_2}\|\tilde{\varphi}_i\|_{\mathbb{L}_2} \\&+ 2\|\mathbbm{1}_{(-\epsilon,\epsilon)}F\|_{\mathbb{L}_2}\|\tilde{\varphi}_i\|_{\mathbb{L}_2}.
        \end{split}
    \end{equation}
    By direct calculations, we have
    \begin{equation*}
        \begin{split}
            \|\mathbbm{1}_{(-\epsilon,\epsilon)}F\|_{\mathbb{L}_2}^2=\frac{1}{4(\lambda_i+1)}\int_{-\epsilon}^{\epsilon} \sin(\sqrt{\lambda_i+1}|x|)^2 dx\leq\frac{\epsilon}{2(
            \lambda_i+1)},
        \end{split}
    \end{equation*}
    and 
    \begin{equation*} 
        \begin{split}
            \|\mathbbm{1}_{(-\epsilon,\epsilon)}D F\|_{\mathbb{L}_2}^2&=\frac{1}{4}\int_{-\epsilon}^{\epsilon} \cos(\sqrt{\lambda_i+1}|x|)^2 dx\leq\frac{\epsilon}{2}.
        \end{split}
    \end{equation*}
    Finally because $\|\tilde{\varphi}_i\|_{\mathbb{L}_2}\leq\|\varphi_i\|_{\mathbb{L}_2}=1$ we have a bound of the form
    $|\tilde{\varphi}_i(y)|\leq C(\epsilon)$
    for all $y\in(0,\epsilon/2)$, furthermore, the bound applies to $y=0$ by continuity of $\tilde{\varphi}_i$. Applying the same argument to $\varphi_i\circ\zeta_2^{-1}$, we have 
    $$\sup_{y\in B_{\epsilon/2}(x)}|\varphi_i(x)|<C(\epsilon).$$
    Since the argument above is in essence one-sided, the same argument applies for $x\in\mathcal{V}_1$. Compactness finishes the proof, as mentioned at the beginning of the proof.
\end{proof}

\begin{remark}
    In the proof of Theorem~\ref{thm:bounded_eigenfunctions} we did not use the ``moreover" part of Lemma~\ref{lem:smooth_eigenfunctions}.
\end{remark}
\begin{corr}\label{corr:eigenfunctions_derivative_bound}
    There exists $C<\infty$ such that for every $(i,j)\in\mathbb{N}\times\mathbb{N}$ and parameterization $\eta$ of $\Gamma$ the following inequality holds
    $$\sup_{x\in\Gamma}|D_{\eta}^j\varphi_i(x)|\leq C\lambda_i^{j/2}.$$
\end{corr}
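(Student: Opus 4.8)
The plan is to reduce the bound for arbitrary order $j$ to the two base cases $j=0$ and $j=1$ by exploiting the eigenvalue equation edge-by-edge, and then to obtain the first-order bound from Theorem~\ref{thm:bounded_eigenfunctions} via a short Taylor (Landau--Kolmogorov type) estimate. Fix an edge $e$ with the natural parameterization $\zeta$ selected by $\eta$, and write $g=\varphi_i\circ\zeta$. By Lemma~\ref{lem:smooth_eigenfunctions}, together with the fact that $\varphi_i\in H^2(e)$ lies in the domain, $g$ is a classical solution of $g''=-\lambda_i g$ that is smooth up to the endpoints of $[0,l_e]$. Differentiating this identity repeatedly gives $g^{(2k)}=(-\lambda_i)^k g$ and $g^{(2k+1)}=(-\lambda_i)^k g'$, so that on each edge
\[
\sup_e|D_\eta^{2k}\varphi_i|=\lambda_i^{k}\sup_e|\varphi_i|,\qquad \sup_e|D_\eta^{2k+1}\varphi_i|=\lambda_i^{k}\sup_e|D_\eta\varphi_i|.
\]
Consequently it suffices to produce a constant $C$, independent of $i$, with $\sup_\Gamma|\varphi_i|\le C$ and $\sup_\Gamma|D_\eta\varphi_i|\le C\sqrt{\lambda_i}$; the stated inequality for every $j$ then follows with the same $C$. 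The quantity $\sup_\Gamma|D_\eta^j\varphi_i|$ is moreover independent of the choice of $\eta$, since reversing the orientation of an edge only changes $D_\eta$ by a sign, so the resulting constant is automatically uniform over all parameterizations.

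The case $j=0$ is exactly Theorem~\ref{thm:bounded_eigenfunctions}, which furnishes $M:=\sup_{i}\sup_\Gamma|\varphi_i|<\infty$. For $j=1$ I would argue separately for high and low frequencies. Set $l_{\min}=\min_{e\in\mathcal{E}}l_e>0$ and suppose first that $\lambda_i\ge 4/l_{\min}^2$, so that $h:=\lambda_i^{-1/2}\le l_{\min}/2$. Given any point $x$ on an edge, at least one of $x\pm h$ stays inside the edge; Taylor expanding $g$ to second order in that admissible direction and using $\|g''\|_{\infty}=\lambda_i\|g\|_{\infty}\le \lambda_i M$ yields
\[
|g'(x)|\le \frac{2M}{h}+\frac{h}{2}\,\lambda_i M=\frac{5}{2}\,M\sqrt{\lambda_i}.
\]
Taking the supremum over all edges and points gives $\sup_\Gamma|D_\eta\varphi_i|\le \tfrac52 M\sqrt{\lambda_i}$ for all such $i$.

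It remains to treat the finitely many indices with $\lambda_i<4/l_{\min}^2$, a finite set because $\lambda_i\to\infty$ and each eigenvalue has finite multiplicity. For these, each $\varphi_i$ is a fixed smooth function on the compact graph $\Gamma$, so $\sup_\Gamma|D_\eta\varphi_i|<\infty$; for the ones with $\lambda_i>0$ the finite quantity $C_0:=\max_i \sup_\Gamma|D_\eta\varphi_i|/\sqrt{\lambda_i}$ gives the desired bound, while for $\lambda_i=0$ the identity $0=\lambda_i\|\varphi_i\|_{\mathbb{L}_2(\Gamma)}^2=\langle-\Delta_\Gamma\varphi_i,\varphi_i\rangle$ forces $\nabla\varphi_i\equiv 0$ under the admissible vertex conditions, so $D_\eta\varphi_i=0$ and the bound holds trivially (matching the vanishing right-hand side). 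Setting $C=\max\{M,\tfrac52 M,C_0\}$ then completes the argument. I expect the main obstacle to be precisely this uniformity: the Taylor estimate produces the sharp $\sqrt{\lambda_i}$ scaling only once $h=\lambda_i^{-1/2}$ fits inside every edge, so one must isolate the low-frequency modes and control them by finiteness, and separately verify that a zero mode, if present, is constant so as to be compatible with the vanishing right-hand side.
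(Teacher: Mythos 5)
Your proof is correct, and it takes a genuinely different route from the paper. The paper's proof also works edge by edge and also rests on Lemma~\ref{lem:smooth_eigenfunctions} and Theorem~\ref{thm:bounded_eigenfunctions}, but instead of reducing to the cases $j\in\{0,1\}$ it solves the boundary value problem $\Delta\psi=-\lambda_i\psi$, $\psi(0)=a$, $\psi(l_e)=b$ in closed form, writing $\psi(t)=a\cos(\sqrt{\lambda_i}\,t)+\frac{b-a\cos(\sqrt{\lambda_i}\,l_e)}{\sin(\sqrt{\lambda_i}\,l_e)}\sin(\sqrt{\lambda_i}\,t)$ and differentiating. That argument is shorter but leaves two degenerate regimes implicit: the near-resonant case $\sin(\sqrt{\lambda_i}\,l_e)\approx 0$, where the displayed coefficient is not obviously controlled by $\sup_\Gamma|\varphi_i|$ until one observes that the amplitude of a sinusoid is captured by its sup norm once $\sqrt{\lambda_i}\,l_e\geq\pi$ (with the remaining finitely many indices handled by finiteness, exactly your low-frequency split), and the case $\lambda_i=0$, where the formula degenerates. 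Your route --- the identities $g^{(2k)}=(-\lambda_i)^k g$ and $g^{(2k+1)}=(-\lambda_i)^k g'$ followed by the Taylor (Landau--Kolmogorov type) estimate $|g'(x)|\leq 2M/h+\tfrac{h}{2}\lambda_i M$ with $h=\lambda_i^{-1/2}$ --- makes both issues explicit and handles them uniformly, at the cost of never producing the exact solution; it would also survive perturbations of the equation for which no closed form exists. One caveat to note: your zero-mode step, where $0=\langle-\Delta_\Gamma\varphi_i,\varphi_i\rangle$ forces $\nabla\varphi_i\equiv 0$, silently uses that the boundary terms in the quadratic form $\langle-\Delta_\Gamma u,u\rangle=\|\nabla u\|^2_{\mathbb{L}_2(\Gamma)}+\sum_{v}u(v)\sum_{(e,\zeta)\in\textrm{Dir}(v)}\partial_{e,\zeta}u(v)$ are nonnegative, which holds for Kirchhoff and Dirichlet conditions and for generalized Kirchhoff with nonnegative coefficient, but can fail for sign-indefinite Robin-type coefficients --- in which case a linear, non-constant zero mode exists and the corollary itself is false; so your observation that the zero mode must be edge-wise constant is not a removable technicality but precisely delineates the regime covered by the paper's ``appropriate assumptions,'' a point the paper's own proof glosses over.
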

\begin{proof}
    By Lemma~\ref{lem:smooth_eigenfunctions}, it is enough to show the inequality for the interior of edges. Let $e\in\mathcal{E}$, $\zeta$ a parameterization of $e$, and $i,j\in\mathbb{N}$. Let $a=\varphi_i(\zeta(0))$ and $b=\varphi_i(\zeta(l_e))$. By Lemma~\ref{lem:smooth_eigenfunctions} the function $\psi=\varphi_i\circ\zeta$ satisfies $\Delta\psi = -\lambda_i\psi$ with the boundary conditions $\psi(0)=a$ and $\psi(l_e)=b$. Therefore 
    \begin{equation*}
        \psi(t) = \begin{cases}
            a\cos(\sqrt{\lambda_i}t)+\frac{b-a\cos(\sqrt{\lambda_i}l_e)}{\sin(\sqrt{\lambda_i}l_e)}\sin(\sqrt{\lambda_i}t), \quad b\not=a\cos(\sqrt{\lambda_i}l_e),\\
            a\cos(\sqrt{\lambda_i}t),\quad \textrm{otherwise}.
        \end{cases}
    \end{equation*}
    for all $t\in(0,l_e)$. Calculating $D^j\psi$ and using Theorem~\ref{thm:bounded_eigenfunctions} yields the inequality.
\end{proof}

\section{Applications to the Fractional Operator $(\kappa^2-\Delta_{\Gamma})^{\alpha/2}$}
\label{sec:appli_1}
In this section, we characterize the spaces $\dot{H}^{\alpha}$ defined below as the domains of the fractional operators $(\kappa^2-\Delta_{\Gamma})^{\alpha/2}$. We show that these characterization yield regularity results for both the fractional PDE $(\kappa^2-\Delta_{\Gamma})^{\alpha/2}u=f$ and the fractional SPDE $(\kappa^2-\Delta_{\Gamma})^{\alpha/2}u=\mathcal{W}$ with $\mathcal{W}$ being a Gaussian white noise on $\mathbb{L}_2(\Gamma)$. The latter SPDE is used to construct Whittle-Mat\'ern fields on metric graphs \cite{bern_gaussian_matern}. For $\alpha=1$, this coincides with the massive Gaussian free fields (massive GFFs), defined as a centered Gaussian on $\Gamma$ with covariance operator $(\kappa^2-\Delta_{\Gamma})^{-1}$ \cite{sheffield_2007, rodriguez_2016}. In this case, the space $\dot{H}^1$ is the Cameron-Martin space associated with the massive GFF.\\
Let $\mathcal{V}_D$ and $\mathcal{V}_K$ form a partition of vertex set $\mathcal{V}$. Define the set of edges incident to $\mathcal{V}_K$ as $\mathcal{E}_K=\{e\in\mathcal{E}:e\cap\mathcal{V}_K\not=\varnothing\}$, and the set of edges incident to $\mathcal{V}_D$ as $\mathcal{E}_D=\{e\in\mathcal{E}:e\cap\mathcal{V}_D\not=\varnothing\}$. We introduce the following functional spaces that will represent the Kirchhoff\footnote{Kirchhoff conditions may also be referred to as Neumann conditions.} and Dirichlet boundary conditions at the vertices:
\begin{equation*}
    \begin{split}
        &K(\Gamma)=\\
        &\left\{ u\in\bigcup_{s>\frac{3}{2}}\bigoplus_{e\in\mathcal{E}_K}H^{s}(e)\oplus \bigoplus_{e\in\mathcal{E}\backslash\mathcal{E}_K}\mathbb{L}_2(e) : \sum_{(e,\zeta)\in\textrm{Dir}(v)}\partial_{e,\zeta}u(v)=0 \quad\forall v\in\mathcal{V}_K\right\},
    \end{split}
\end{equation*}
$$D(\Gamma)=\left\{ u\in\bigcup_{s>\frac{1}{2}}\bigoplus_{e\in\mathcal{E}_D}H^{s}(e)\oplus \bigoplus_{e\in\mathcal{E}\backslash\mathcal{E}_D}\mathbb{L}_2(e) : u(v)=0 \quad\forall v\in\mathcal{V}_D\right\}.$$
Let $\Delta_{\Gamma}$ be the Laplacian operator $\bigoplus_{e\in\mathcal{E}}\Delta_e$ with domain $$\mathcal{D}(\Delta_{\Gamma})=\bigoplus_{e\in\mathcal{E}}H^{2}(e)\cap K(\Gamma)\cap D(\Gamma).$$ Let $\kappa>0$ and consider the operator $L=\kappa^2-\Delta_{\Gamma}$, which is a positive and self-adjoint. Furthermore, $L$ has a compact resolvent. Indeed, the spectrum of $L$ is real and hence for any complex number $c\in\mathbb{C}\backslash\mathbb{R}$ the operator $(L-c)^{-1}$ is well-defined and continuous from $\mathbb{L}_2(\Gamma)$ into $\bigoplus_{e\in\mathcal{E}}H^{2}(e)$. Hence by the Rellich–Kondrachov theorem the operator $(L-c)^{-1}:\mathbb{L}_2(\Gamma)\to\mathbb{L}_2(\Gamma)$ is compact. Therefore, by \citeA[Theorem~1.7.16 and Theorem~2.6.8]{miklavcic}, there exist sequences of positive real eigenvalues $\{\lambda_i\}_{i\in\mathbb{N}}$ of $L$ that increase to infinity and corresponding eigenfunctions $\{\varphi_i\}_{i\in\mathbb{N}}$ that constitute a basis over $\mathbb{L}_2(\Gamma)$. Moreover, by Lemma~\ref{lem:smooth_eigenfunctions} each eigenfunction satisfies $\varphi_i\in\bigoplus_{e\in\mathcal{E}}C^{\infty}(e)$. Write $\widehat{\lambda}_i = \lambda_i-\kappa^2$, and note that $\Delta \varphi_i=-\widehat{\lambda}_i\varphi_i$.
\begin{theorem}[Weyl's law]
There exist constants $A,B>0$ that satisfy
    $Ai^2\leq\lambda_i\leq Bi^2$ for all $i\in\mathbb{N}$.
\end{theorem}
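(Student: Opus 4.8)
The plan is to establish the two-sided bound by Dirichlet--Neumann bracketing, reducing the problem to the spectra of fully decoupled edge operators, which are explicitly computable. The starting point is the variational (min--max) characterization of the eigenvalues of the self-adjoint operator $L=\kappa^2-\Delta_\Gamma$ through its quadratic form
\[
a(u,u)=\sum_{e\in\mathcal{E}}\int_e |\nabla u|^2\,dx + \kappa^2\int_\Gamma |u|^2\,dx,
\]
whose form domain $\mathcal{Q}(L)$ consists of the functions in $\bigoplus_{e\in\mathcal{E}}H^1(e)$ that are continuous across the Kirchhoff vertices $\mathcal{V}_K$ and vanish at the Dirichlet vertices $\mathcal{V}_D$. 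Writing $N(\lambda)=\#\{i:\lambda_i\le\lambda\}$ for the eigenvalue counting function (eigenvalues repeated according to multiplicity), it then suffices to sandwich $N(\lambda)$ between two explicit counting functions and invert the resulting estimate.

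The bracketing step introduces two comparison operators obtained by decoupling all edges: $L^N$, acting as $\kappa^2-\tfrac{d^2}{dx^2}$ on each interval $[0,l_e]$ with Neumann conditions at both endpoints and form domain $\bigoplus_{e}H^1(e)$; and $L^D$, the same differential expression with Dirichlet conditions at both endpoints and form domain $\bigoplus_{e}H^1_0(e)$. Since every function vanishing at all edge endpoints is automatically continuous across $\mathcal{V}_K$ and vanishes on $\mathcal{V}_D$, whereas $\mathcal{Q}(L)$ sits inside the unconstrained space $\bigoplus_e H^1(e)$, we obtain the form-domain inclusions
\[
\bigoplus_{e\in\mathcal{E}}H^1_0(e)\subseteq \mathcal{Q}(L)\subseteq \bigoplus_{e\in\mathcal{E}}H^1(e),
\]
and the min--max principle gives $\lambda_i^N\le\lambda_i\le\lambda_i^D$ for all $i$. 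The spectra of $L^N$ and $L^D$ are unions over edges of the interval spectra $\{\kappa^2+(n\pi/l_e)^2\}$, with $n\ge 0$ in the Neumann case and $n\ge 1$ in the Dirichlet case, so for $\lambda\ge\kappa^2$ the associated counting functions satisfy
\[
N^D(\lambda)=\sum_{e\in\mathcal{E}}\floor{\tfrac{l_e}{\pi}\sqrt{\lambda-\kappa^2}},\qquad N^N(\lambda)=N^D(\lambda)+\#\mathcal{E}.
\]

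The eigenvalue interlacing translates into the counting-function sandwich $N^D(\lambda)\le N(\lambda)\le N^N(\lambda)$, whence, with $\mathcal{L}=\sum_{e\in\mathcal{E}}l_e$ the total length,
\[
\tfrac{\mathcal{L}}{\pi}\sqrt{\lambda-\kappa^2}-\#\mathcal{E}\le N(\lambda)\le \tfrac{\mathcal{L}}{\pi}\sqrt{\lambda-\kappa^2}+\#\mathcal{E}.
\]
To conclude, I would invert this at $\lambda=\lambda_i$. The upper bound $N(\lambda_i)\le \tfrac{\mathcal{L}}{\pi}\sqrt{\lambda_i}+\#\mathcal{E}$ combined with $N(\lambda_i)\ge i$ gives $\sqrt{\lambda_i}\ge \tfrac{\pi}{\mathcal{L}}(i-\#\mathcal{E})$, the desired lower bound; applying the lower estimate on $N$ for $\lambda<\lambda_i$, where $N(\lambda)\le i-1$, and letting $\lambda\uparrow\lambda_i$ yields $\sqrt{\lambda_i-\kappa^2}\le\tfrac{\pi}{\mathcal{L}}(i+\#\mathcal{E})$, the desired upper bound. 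Together these give $\lambda_i\asymp i^2$. The only genuine care needed, and the main obstacle, is extracting constants $A,B$ uniform over \emph{all} $i\in\mathbb{N}$: the displayed inequalities are effective only once $i$ exceeds $\#\mathcal{E}$, so the finitely many small indices must be absorbed separately, using $\lambda_i\ge\lambda_1>0$ to fix $A$ and the constants $\kappa^2,\#\mathcal{E}$ to fix $B$. The remaining routine point is that the bracketing is insensitive to the particular mixture of Kirchhoff and Dirichlet conditions, which is precisely why the inclusions above hold for any partition $\mathcal{V}=\mathcal{V}_K\cup\mathcal{V}_D$.
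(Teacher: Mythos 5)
Your proof is correct, and it takes a genuinely more self-contained route than the paper. The paper's own proof is two lines: it sets $N(k)=|\{i:\lambda_i\le k^2\}|$, cites \citeA[Lemma~4.4]{berk} for the two-sided counting bound $C_1k\le N(k)\le C_2k$ on a compact graph, and inverts via $N(\floor{\sqrt{\lambda_j}}-1)\le j\le N(\floor{\sqrt{\lambda_j}}+1)$. Your inversion step is the same in substance; the real difference is that you replace the citation by a full Dirichlet--Neumann bracketing argument. The form-domain inclusions $\bigoplus_{e}H^1_0(e)\subseteq\mathcal{Q}(L)\subseteq\bigoplus_{e}H^1(e)$ are correct (a function vanishing at all edge endpoints is automatically continuous on $\Gamma$ and vanishes on $\mathcal{V}_D$, and the Kirchhoff derivative condition is natural, hence invisible in the form domain), the min--max orientation $\lambda_i^N\le\lambda_i\le\lambda_i^D$ is right, and the explicit interval spectra give the sharp sandwich $\tfrac{\mathcal{L}}{\pi}\sqrt{\lambda-\kappa^2}-\#\mathcal{E}\le N(\lambda)\le \tfrac{\mathcal{L}}{\pi}\sqrt{\lambda-\kappa^2}+\#\mathcal{E}$, i.e.\ Weyl asymptotics with explicit constants, where the paper's citation only buys a qualitative $N(k)\asymp k$. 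Your handling of the loose ends is also sound: since $L=\kappa^2-\Delta_\Gamma$ is positive one has $\lambda_i\ge\kappa^2$, so the formulas for $N^D,N^N$ apply at $\lambda=\lambda_i$; the limit $\lambda\uparrow\lambda_i$ with $N(\lambda)\le i-1$ is legitimate (and the borderline case $\lambda_i=\kappa^2$ makes the upper bound trivial); and the finitely many small indices are absorbed exactly as you say, e.g.\ $A=\min\bigl\{\pi^2/(4\mathcal{L}^2),\,\kappa^2/(2\#\mathcal{E})^2\bigr\}$ works because $i-\#\mathcal{E}\ge i/2$ for $i\ge 2\#\mathcal{E}$, while $B$ comes from $\lambda_i\le\kappa^2+\tfrac{\pi^2}{\mathcal{L}^2}(i+\#\mathcal{E})^2\le\bigl(\kappa^2+\tfrac{\pi^2}{\mathcal{L}^2}(1+\#\mathcal{E})^2\bigr)i^2$. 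Your final observation, that the bracketing is blind to the partition $\mathcal{V}=\mathcal{V}_K\cup\mathcal{V}_D$, is precisely why the lemma the paper cites holds in this generality; in effect you have reproved that lemma rather than invoked it.
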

\begin{proof}
    For positive integers $k$ denote $N(k)=|\{i\in\mathbb{N}: \lambda_i\leq k^2\}|$. Since the graph $\Gamma$ is compact, by \citeA[Lemma~4.4]{berk} there exists $C_1,C_2>0$ such that 
    $C_1k\leq N(k)\leq C_2k$ for all $k\in\mathbb{N}$. The result follows by noticing that for all $j\in\mathbb{N}$ we have $N(\floor{\sqrt{\lambda_j}}-1)\leq j$ and $N(\floor{\sqrt{\lambda_j}}+1)\geq j$.
\end{proof}

Using the eigenfunctions we define for $\alpha>0$ the fractional power operator $L^{\alpha/2}$ by 
\begin{equation}\label{eq:L_alpha_def_1}
L^{\alpha/2}u = \left(\kappa^2-\Delta_{\Gamma}\right)^{\alpha/2}u = \sum_{i=1}^{\infty}\lambda_i^{\alpha/2}\langle u,\varphi_i\rangle\varphi_i, 
\end{equation}
on the space $$\mathcal{D}(L^{\alpha/2})=\dot{H}^{\alpha}=\left\{u\in\mathbb{L}_2(\Gamma) : \sum_{i=1}^{\infty}\lambda_i^{\alpha}\langle u,\varphi_i\rangle^2<\infty\right\},$$
where the sum in \eqref{eq:L_alpha_def_1} converges in $\mathbb{L}_2(\Gamma)$ whenever $u\in\dot{H}^{\alpha}$.
Equip the space $\dot{H}^{\alpha}$ by the norm $\|u\|_{\alpha}^2 = \sum_{i=1}^{\infty}\lambda_i^{\alpha}\langle u,\varphi_i\rangle^2$. The dual space of $\dot{H}^{\alpha}$ is denoted $\dot{H}^{-\alpha}$ and we equip it with the equivalent (to the operator norm) norm $\|\psi\|_{-\alpha}^2=\sum_{i=1}^{\infty}\lambda_i^{-\alpha}\langle\psi,\varphi_i\rangle^{2}$,  with $\langle\cdot,\cdot\rangle$ meaning the paring in this context. The operator $L^{\alpha/2}$ defined in \eqref{eq:L_alpha_def_1} can be isometrically (hence continuously) extended to $L^{\alpha/2}:\dot{H}^{\beta}\to\dot{H}^{\beta-\alpha}$ for any $\beta\in\mathbb{R}$, this extension is easily seen to be surjective. This justifies defining the continuous inverse operator $L^{-\alpha/2}:\dot{H}^{\beta}\to\dot{H}^{\beta+\alpha}$. We now aim to characterize the spaces $\dot{H}^{\alpha}$ in terms of the Sobolev spaces introduced earlier; such identifications have been pursued in \citeA{ mathcomp_paper, bern_gaussian_matern}. For each $n\in\mathbb{N}$, define the spaces
\begin{equation*}
    \begin{split}
        K^n(\Gamma)&=\Bigg\{ u\in\bigcup_{s>2n-\frac{1}{2}}\bigoplus_{e\in\mathcal{E}_K}H^{s}(e)\oplus \bigoplus_{e\in\mathcal{E}\backslash\mathcal{E}_K}\mathbb{L}_2(e) : D^{2j}u\in K(\Gamma) \\
        &\hphantom{=\Bigg\{ u\in\bigcup_{s>2n-\frac{1}{2}}\bigoplus_{e\in\mathcal{E}_K}H^{s}(e)\oplus \bigoplus_{e\in\mathcal{E}\backslash\mathcal{E}_K}\mathbb{L}_2(e) :} \forall j\in\{0,\dots, n-1\}\Bigg\},
    \end{split}
\end{equation*}
\begin{equation*}
    \begin{split}
        D^n(\Gamma)&=\Bigg\{ u\in\bigcup_{s>2n-\frac{3}{2}}\bigoplus_{e\in\mathcal{E}_D}H^{s}(e)\oplus \bigoplus_{e\in\mathcal{E}\backslash\mathcal{E}_D}\mathbb{L}_2(e) : D^{2j}u\in D(\Gamma)\\
        &\hphantom{=\Bigg\{ u\in\bigcup_{s>2n-\frac{3}{2}}\bigoplus_{e\in\mathcal{E}_D}H^{s}(e)\oplus \bigoplus_{e\in\mathcal{E}\backslash\mathcal{E}_D}\mathbb{L}_2(e) :}\forall j\in\{0,\dots, n-1\}\Bigg\},    
    \end{split}
\end{equation*}
and set $K^0(\Gamma) = D^0(\Gamma)= \mathbb{L}_2(\Gamma)$.

\begin{lemma}\label{lem:H_dot_H_direct_1}
    Let $\alpha\in(0,\infty)$, we have $\dot{H}^{\alpha}\hookrightarrow \bigoplus_{e\in\mathcal{E}}H^{\alpha}(e)$.
\end{lemma}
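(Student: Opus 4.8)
The plan is to prove the embedding first at integer orders and then to pass to fractional orders by interpolation. The cornerstone is the identity
\begin{equation}\label{eq:Dj_identity}
\|D^j u\|_{\mathbb{L}_2(\Gamma)}^2 = \sum_{i=1}^{\infty}\widehat{\lambda}_i^{\,j}\,\langle u,\varphi_i\rangle^2,\qquad j\in\mathbb{N}_0,
\end{equation}
valid for every $u\in\dot{H}^{k}$ and every $j\le k$, where $D^j$ is the edge-wise weak derivative. Write $c_i=\langle u,\varphi_i\rangle$ and $u_N=\sum_{i\le N}c_i\varphi_i$. Since $\Delta\varphi_i=-\widehat{\lambda}_i\varphi_i$, $D^2=\Delta$, and $\varphi_i\in\bigoplus_{e\in\mathcal{E}}C^{\infty}(e)$ by Lemma~\ref{lem:smooth_eigenfunctions}, each eigenfunction satisfies $D^{2m}\varphi_i=(-\widehat{\lambda}_i)^m\varphi_i$. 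For even $j=2m$ this gives $D^{2m}u_N=\sum_{i\le N}c_i(-\widehat{\lambda}_i)^m\varphi_i$, which converges in $\mathbb{L}_2(\Gamma)$ because $\sum_i c_i^2\widehat{\lambda}_i^{2m}\le\sum_i c_i^2\lambda_i^{2m}<\infty$; as the edge-wise operator $D^{2m}$ with domain $\bigoplus_{e\in\mathcal{E}} H^{2m}(e)$ is closed on $\mathbb{L}_2(\Gamma)$, its limit is $D^{2m}u$, and orthonormality of $\{\varphi_i\}$ yields \eqref{eq:Dj_identity}.

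For odd $j=2m+1$ I would differentiate once more: setting $w_N=D^{2m}u_N$, the key computation is the $\mathbb{L}_2(\Gamma)$-orthogonality $\langle D\varphi_i,D\varphi_{i'}\rangle_{\mathbb{L}_2(\Gamma)}=\widehat{\lambda}_i\,\delta_{ii'}$. This follows by integrating by parts edge-wise and collecting the boundary contributions vertex by vertex: the total boundary term equals $\sum_{v\in\mathcal{V}}\varphi_{i'}(v)\sum_{(e,\zeta)\in\textrm{Dir}(v)}\partial_{e,\zeta}\varphi_i(v)$, which vanishes because $\varphi_{i'}(v)=0$ at Dirichlet vertices while the flux $\sum_{(e,\zeta)\in\textrm{Dir}(v)}\partial_{e,\zeta}\varphi_i(v)=0$ at Kirchhoff vertices; hence $\langle D\varphi_i,D\varphi_{i'}\rangle=-\langle\Delta\varphi_i,\varphi_{i'}\rangle=\widehat{\lambda}_i\delta_{ii'}$. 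Consequently $\|Dw_N\|_{\mathbb{L}_2(\Gamma)}^2=\sum_{i\le N}c_i^2\widehat{\lambda}_i^{2m+1}$, the sequence $\{Dw_N\}$ is Cauchy whenever $u\in\dot{H}^{2m+1}$, and closedness of $D$ identifies its limit with $D^{2m+1}u$, giving \eqref{eq:Dj_identity}. Since $L\ge\kappa^2$ forces $0\le\widehat{\lambda}_i\le\lambda_i$ with $\lambda_i\ge\kappa^2>0$, we get $\widehat{\lambda}_i^{\,j}\le\lambda_i^{\,j}\le C\lambda_i^{\,k}$ for $j\le k$ with $C=\max(1,\kappa^{-2k})$. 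Summing \eqref{eq:Dj_identity} over $0\le j\le k$ and using that $\|u\|_{H^k(e)}^2\approx\sum_{j=0}^{k}\|D^j u\|_{\mathbb{L}_2(e)}^2$ then gives $\|u\|_{\bigoplus_{e\in\mathcal{E}}H^k(e)}^2\le C\sum_{j=0}^{k}\sum_i c_i^2\widehat{\lambda}_i^{\,j}\le C\|u\|_k^2$, that is, $\dot{H}^{k}\hookrightarrow\bigoplus_{e\in\mathcal{E}}H^{k}(e)$ for every $k\in\mathbb{N}_0$.

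Finally I would interpolate. The spaces $\dot{H}^{s}$ form a Hilbert scale for the self-adjoint positive operator $L^{1/2}$, so $[\dot{H}^{k_0},\dot{H}^{k_1}]_{\theta}\cong\dot{H}^{(1-\theta)k_0+\theta k_1}$; on the target side, the interpolation identity $H^{\alpha}(e)\cong[H^{\floor{\alpha}}(e),H^{\ceil{\alpha}}(e)]_{\alpha-\floor{\alpha}}$ combined with Lemma~\ref{lem:direct_sum_interpolation_lemma} gives $\bigoplus_{e\in\mathcal{E}}H^{\alpha}(e)\cong[\bigoplus_{e\in\mathcal{E}}H^{\floor{\alpha}}(e),\bigoplus_{e\in\mathcal{E}}H^{\ceil{\alpha}}(e)]_{\alpha-\floor{\alpha}}$. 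For non-integer $\alpha$ I take $k_0=\floor{\alpha}$, $k_1=\ceil{\alpha}$ and $\theta=\alpha-\floor{\alpha}$; applying the interpolation functor to the inclusions $\dot{H}^{k_0}\hookrightarrow\bigoplus_{e\in\mathcal{E}}H^{k_0}(e)$ and $\dot{H}^{k_1}\hookrightarrow\bigoplus_{e\in\mathcal{E}}H^{k_1}(e)$, which are both restrictions of the identity on $\mathbb{L}_2(\Gamma)$ and hence an admissible operator on the two compatible couples, yields the bounded inclusion $\dot{H}^{\alpha}\hookrightarrow\bigoplus_{e\in\mathcal{E}}H^{\alpha}(e)$.

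The main obstacle is the odd-order step: establishing $\langle D\varphi_i,D\varphi_{i'}\rangle_{\mathbb{L}_2(\Gamma)}=\widehat{\lambda}_i\delta_{ii'}$, i.e.\ that the edge-wise Dirichlet integral coincides with the quadratic form of $-\Delta_{\Gamma}$. This rests on the vanishing of the vertex boundary terms guaranteed by the Kirchhoff and Dirichlet conditions together with the self-adjointness of $\Delta_{\Gamma}$, and the attendant passages to the limit must be carried out through closedness of the edge-wise derivative operators rather than through term-by-term supremum bounds.
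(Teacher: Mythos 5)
Your proposal is correct, and its fractional half coincides with the paper's proof: both reduce to integer orders and then interpolate, using the Hilbert-scale identification $[\dot{H}^{k_0},\dot{H}^{k_1}]_{\theta}\cong\dot{H}^{(1-\theta)k_0+\theta k_1}$ (Lunardi, Theorem~4.36), the identity $H^{\alpha}(e)\cong[H^{\floor{\alpha}}(e),H^{\ceil{\alpha}}(e)]_{\alpha-\floor{\alpha}}$ combined with Lemma~\ref{lem:direct_sum_interpolation_lemma}, and interpolation of the two endpoint inclusions (the paper cites McLean, Theorem~B.2, for this last step). Where you genuinely diverge is the integer case. The paper identifies $\dot{H}^{\alpha}$ with the energetic space of $L^{\alpha}$ (citing Zeidler), i.e.\ the completion of the core $H^{2\alpha}(\Gamma)\cap K^{\alpha}(\Gamma)\cap D^{\alpha}(\Gamma)$ under $\|u\|_E^2=\langle L^{\alpha}u,u\rangle$, asserts equivalence of $\|\cdot\|_E$ with $\|\cdot\|_{\bigoplus_{e\in\mathcal{E}}H^{\alpha}(e)}$ on that core, and concludes from completeness of $\bigoplus_{e\in\mathcal{E}}H^{\alpha}(e)$; you instead prove the exact Parseval-type identity $\|D^ju\|_{\mathbb{L}_2(\Gamma)}^2=\sum_i\widehat{\lambda}_i^{\,j}\langle u,\varphi_i\rangle^2$ directly for every $u\in\dot{H}^k$ and $j\le k$, by differentiating the spectral partial sums, invoking closedness of the edge-wise weak derivative operators, and, for odd orders, the orthogonality $\langle D\varphi_i,D\varphi_{i'}\rangle_{\mathbb{L}_2(\Gamma)}=\widehat{\lambda}_i\delta_{ii'}$, whose vertex boundary terms vanish exactly by the Kirchhoff and Dirichlet conditions. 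At bottom both mechanisms are the same integration by parts against the vertex conditions, but your version is more self-contained and slightly stronger: it yields equality rather than mere norm equivalence, gives $\widehat{\lambda}_i\ge 0$ as a byproduct, and sidesteps the density-plus-completion step and the energetic-space formalism, whereas the paper's route is shorter on the page because it delegates the computation to the asserted equivalence of norms on the core. One cosmetic remark: with the paper's inward-derivative convention the collected boundary contribution is $-\sum_{v\in\mathcal{V}}\varphi_{i'}(v)\sum_{(e,\zeta)\in\textrm{Dir}(v)}\partial_{e,\zeta}\varphi_i(v)$, i.e.\ your expression carries the opposite sign (outward versus inward derivatives); since it vanishes in either form, nothing in the argument is affected.
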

\begin{proof}
    First let $\alpha\in\mathbb{N}$. The space $H^{2\alpha}(\Gamma)\cap K^{\alpha}(\Gamma)\cap D^{\alpha}(\Gamma)$ is dense in $\mathbb{L}_2(\Gamma)$ because it contains subspace spanned by the eigenfunctions $\{\varphi_i\}_{i\in\mathbb{N}}$. Observe that for $\kappa>0$ that the operator $L^{\alpha}:H^{2\alpha}(\Gamma)\cap K^{\alpha}(\Gamma)\cap D^{\alpha}(\Gamma)\to\mathbb{L}_2(\Gamma)$ is self-adjoint on $\mathbb{L}_2(\Gamma)$ and has eigenvalues $\{\lambda_i^{\alpha}\}_{i\in\mathbb{N}}$ and eigenfunctions $\{\varphi_i\}_{i\in\mathbb{N}}$. Thus $\dot{H}^{\alpha}$ is defined as the energetic space (\citeA[Example~4 in Section 5.8]{zeidler}) of the operator $L^{\alpha}$, which is the completion of $H^{2\alpha}(\Gamma)\cap K^{\alpha}(\Gamma)\cap D^{\alpha}(\Gamma)$ with respect to the norm $\|u\|_E=\langle L^{\alpha}u,u\rangle$. The norms $\|\cdot\|_{E}$ and $\|\cdot\|_{\bigoplus_{e\in\mathcal{E}}H^{\alpha}(e)}$ are equivalent on the space $H^{2\alpha}(\Gamma)\cap K^{\alpha}(\Gamma)\cap D^{\alpha}(\Gamma)$. The continuous inclusion follows because $\bigoplus_{e\in\mathcal{E}}H^{\alpha}(e)$ is closed under the norm $\|.\|_{\bigoplus_{e\in\mathcal{E}}H^{\alpha}(e)}$.\\
    Now assume that $\alpha\in(0,\infty)\backslash\mathbb{N}$. By \citeA[Theorem~4.36]{lunardi} we have $\dot{H}^{\alpha}\cong [\dot{H}^{\floor{\alpha}}, \dot{H}^{\ceil{\alpha}}]_{\alpha-\floor{\alpha}}$. By Lemma~\ref{lem:direct_sum_interpolation_lemma} and \citeA[Theorem~B.8]{mclean} we have $$\bigoplus_{e\in\mathcal{E}}H^{\alpha}=\left[\bigoplus_{e\in\mathcal{E}}H^{\floor{\alpha}}, \bigoplus_{e\in\mathbb{E}}H^{\ceil{\alpha}}\right]_{\alpha-\floor{\alpha}}.$$ Since $\dot{H}^{\floor{\alpha}}\hookrightarrow \bigoplus_{e\in\mathcal{E}}H^{\floor{\alpha}}(e)$ and $\dot{H}^{\ceil{\alpha}}\hookrightarrow \bigoplus_{e\in\mathcal{E}}H^{\ceil{\alpha}}(e)$, it follows by interpolation \cite[Theorem~B.2]{mclean} that $\dot{H}^{\alpha}\hookrightarrow\bigoplus_{e\in\mathcal{E}}H^{\alpha}(e)$.
\end{proof}

\begin{lemma}\label{lem:H_dot_H_Du}
    Let $\alpha\in(0,\infty)\backslash\{2k+\frac{1}{2}:k\in\mathbb{N}_0\}$ and $u\in\dot{H}^{\alpha}$. We have $u\in C^{\left(2\floor{\frac{\alpha}{2}-\frac{1}{4}}\right)}\cap K^{\floor{\frac{\alpha}{2}+\frac{1}{4}}}\cap D^{\floor{\frac{\alpha}{2}+\frac{3}{4}}}$ and
    \begin{equation}\label{eq:Du}
        D^j u = \sum_{i=1}^{\infty} \langle u,\varphi_i\rangle D^j\varphi_i = (-1)^{\floor{\frac{j}{2}}}\sum_{i=1}^{\infty} \widehat{\lambda}_i^{\floor{\frac{j}{2}}}\langle u,\varphi_i\rangle D^{j-2\floor{\frac{j}{2}}}\varphi_i
    \end{equation}
    for even $0\leq j\leq\alpha$ and odd $0\leq j<\alpha-\frac{1}{2}$, with the convergence of the series being absolutely uniform if $0\leq j<\alpha-\frac{1}{2}$. Furthermore,  $D^ju\in\dot{H}^{\alpha-j}$ for every even $0\leq j\leq\alpha$.
\end{lemma}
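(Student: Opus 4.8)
The plan is to work entirely with the spectral expansion $u=\sum_i a_i\varphi_i$, where $a_i=\langle u,\varphi_i\rangle$ and $\sum_i\lambda_i^{\alpha}a_i^2=\|u\|_\alpha^2<\infty$, and to differentiate it term by term. The algebraic identity underlying \eqref{eq:Du} is immediate from the eigenrelation $\Delta\varphi_i=-\widehat\lambda_i\varphi_i$: applying $D^2=\Delta$ repeatedly gives $D^{2m}\varphi_i=(-1)^m\widehat\lambda_i^{m}\varphi_i$, hence $D^{j}\varphi_i=(-1)^{\floor{j/2}}\widehat\lambda_i^{\floor{j/2}}D^{\,j-2\floor{j/2}}\varphi_i$, which is precisely the second equality in \eqref{eq:Du} applied termwise. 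The real content is therefore (i) convergence of the differentiated series, (ii) identification of its sum with the weak derivative $D^ju$, and (iii) reading off membership in $C^{(\cdot)}$, $K^{\cdot}$, $D^{\cdot}$.

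For convergence I would combine Corollary~\ref{corr:eigenfunctions_derivative_bound}, which gives $\sup_x|D^j_\eta\varphi_i(x)|\le C\lambda_i^{j/2}$ (using $\widehat\lambda_i\le\lambda_i$ and Theorem~\ref{thm:bounded_eigenfunctions} to absorb the finitely many indices with $\widehat\lambda_i\le 0$), with Weyl's law $\lambda_i\asymp i^2$. Cauchy--Schwarz yields
\[
\sum_i|a_i|\,\sup_x|D^j\varphi_i(x)|\le C\Big(\sum_i\lambda_i^{\alpha}a_i^2\Big)^{1/2}\Big(\sum_i\lambda_i^{\,j-\alpha}\Big)^{1/2},
\]
and $\sum_i\lambda_i^{\,j-\alpha}\asymp\sum_i i^{2(j-\alpha)}$ converges precisely when $j<\alpha-\tfrac12$, giving absolutely uniform convergence of $\sum_i a_iD^j\varphi_i$ for every $0\le j<\alpha-\tfrac12$. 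To identify the sum with $D^ju$ I would invoke closedness of weak differentiation edge by edge: the partial sums $u_N=\sum_{i\le N}a_i\varphi_i$ converge to $u$ in $\mathbb{L}_2(\Gamma)$ while $D^ju_N$ converges uniformly, hence in $\mathbb{L}_2(\Gamma)$ by compactness of $\Gamma$, so the limit is the weak derivative $D^ju$. This proves \eqref{eq:Du} with the stated absolutely uniform convergence for all $j$ (odd or even) with $0\le j<\alpha-\tfrac12$. For the remaining even orders $\alpha-\tfrac12\le j\le\alpha$ I argue in $\mathbb{L}_2$: setting $v_j=(-1)^{j/2}\sum_i\widehat\lambda_i^{j/2}a_i\varphi_i$, the bound $\widehat\lambda_i^{\,j}\le C\lambda_i^{\,j}$ gives $\|v_j\|_{\alpha-j}^2=\sum_i\lambda_i^{\alpha-j}\widehat\lambda_i^{\,j}a_i^2\le C\|u\|_\alpha^2<\infty$, so the series converges in $\dot H^{\alpha-j}$ (in particular in $\mathbb{L}_2$), and the same closedness argument gives $v_j=D^ju$. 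This simultaneously delivers the last assertion $D^ju\in\dot H^{\alpha-j}$ for all even $0\le j\le\alpha$.

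The space memberships follow by matching these thresholds to the floor expressions. Edgewise, Lemma~\ref{lem:H_dot_H_direct_1} gives $u\in\bigoplus_e H^\alpha(e)$, and the Sobolev embedding on intervals gives $u\in\bigoplus_e C^{2M}(e)$ with $M=\floor{\alpha/2-\tfrac14}$, since $2M<\alpha-\tfrac12$. The even derivatives $D^{2j}u=(-1)^j\sum_i\widehat\lambda_i^{\,j}a_i\varphi_i$ are continuous on $\Gamma$ for every $j\le M$: each term is a continuous multiple of $\varphi_i\in C(\Gamma)$ and the series converges uniformly because $2j<\alpha-\tfrac12$, which is exactly the requirement for $u\in C^{(2M)}(\Gamma)$. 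For $u\in D^{N_D}(\Gamma)$ with $N_D=\floor{\alpha/2+\tfrac34}$, I note $N_D-1=M$, so the relevant $D^{2j}u$ are continuous up to the vertices; since each $\varphi_i$ vanishes on $\mathcal V_D$, passing the uniform limit to $v\in\mathcal V_D$ gives $D^{2j}u(v)=\lim_N D^{2j}u_N(v)=0$. For $u\in K^{N_K}(\Gamma)$ with $N_K=\floor{\alpha/2+\tfrac14}$, the Kirchhoff flux of $D^{2j}u$ at $v\in\mathcal V_K$ involves the odd derivative $D^{2j+1}u$, whose series converges uniformly up to the vertices precisely when $2j+1<\alpha-\tfrac12$, i.e.\ $j\le N_K-1$; since every partial sum satisfies the Kirchhoff condition exactly, the flux of $D^{2j}u$ equals $\lim_N\sum_{(e,\zeta)\in\textrm{Dir}(v)}\partial_{e,\zeta}D^{2j}u_N(v)=0$.

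I expect the main obstacle to be the boundary-condition inheritance at the vertices, namely establishing $D^{2j}u\in K(\Gamma)$ and $D^{2j}u\in D(\Gamma)$, because this requires the differentiated series to converge uniformly all the way up to the vertices (not merely in the interior of edges), so that the one-sided vertex derivatives of $u$ coincide with the limits of the corresponding vertex derivatives of the partial sums. The exact correspondence between the admissible range of $j$ and the floor indices $\floor{\alpha/2-\tfrac14}$, $\floor{\alpha/2+\tfrac14}$ and $\floor{\alpha/2+\tfrac34}$ hinges on the sharp threshold $j<\alpha-\tfrac12$ coming from Weyl's law, so the critical half-integer orders of $\alpha$ must be tracked with care; the global exclusion $\alpha\notin\{2k+\tfrac12\}$ removes the thresholds governing the continuity and Dirichlet counts, and the Kirchhoff count is governed by the analogous critical values, which must be handled separately in the borderline cases.
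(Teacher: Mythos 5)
Your proposal is correct and follows essentially the same route as the paper: the spectral expansion combined with Theorem~\ref{thm:bounded_eigenfunctions}, Corollary~\ref{corr:eigenfunctions_derivative_bound}, Weyl's law and Cauchy--Schwarz to get absolutely uniform convergence when $j<\alpha-\frac{1}{2}$, identification of the limit with the weak derivative (your closedness-of-$D^{j}$ argument is the same computation as the paper's termwise integration by parts against $C^{\infty}_c(e)$ test functions), and inheritance of the vertex conditions by passing uniform limits up to the vertices, which is precisely the paper's identity \eqref{eq:partial_e_1} applied to $w=D^{2j}u\in\dot{H}^{\alpha-2j}$ with $\alpha-2j>\frac{3}{2}$, the same threshold you derive. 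The only deviations are harmless: for even $j\in[\alpha-\frac{1}{2},\alpha]$ you argue directly in $\mathbb{L}_2$ via closedness and the spectral bound $\sum_i\lambda_i^{\alpha-j}\widehat{\lambda}_i^{\,j}\langle u,\varphi_i\rangle^2\leq C\|u\|_{\alpha}^2$, where the paper instead inducts through $w=D^{j-2}u\in\dot{H}^{2}$ and the identity $D^{j}u=(\kappa^{2}-L)w$ with symmetry of $D^{j-2}$, and the borderline Kirchhoff values $\alpha\in\{2k+\frac{3}{2}\}$ that you flag as needing separate treatment are in fact equally untreated by the paper (its flux argument also covers only even $j<\alpha-\frac{3}{2}$), which is immaterial in all later applications since $\mathbb{A}$ excludes every half-integer.
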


\begin{proof}
    Applying derivatives up to order $\floor{\alpha}$ is justified by Lemma~\ref{lem:H_dot_H_direct_1}. We will first show the result for $0\leq j<\alpha-\frac{1}{2}$. Using Theorem~\ref{thm:bounded_eigenfunctions}, Weyl's law, and Cauchy-Schwarz inequality, we have
\begin{equation}\label{eq:H_Du_eq_1}
    \begin{split}
        \sum_{i=i_b}^{\infty}|\langle u,\varphi_i\rangle||\varphi_i|&\leq \left(\sup_{i\in\mathbb{N}}\sup_{x\in \Gamma}|\varphi_i(x)|\right)\sum_{i=i_b}^{\infty}|\langle u,\varphi_i\rangle|\\&\leq \left(\sup_{i\in\mathbb{N}}\sup_{x\in \Gamma}|\varphi_i(x)|\right)\sqrt{\sum_{i=i_b}^{\infty}\lambda_i^{\alpha}\langle u,\varphi_i\rangle^2}\sqrt{\sum_{i=i_b}^{\infty}\lambda_i^{-\alpha}}<\infty.
    \end{split}
\end{equation}
By Weierstrass $M$-test we have that $\sum_{i=1}^{\infty}\langle u,\varphi_i\rangle\varphi_i$ converges uniformly to $u$. Let $e$ be an edge,  $\psi\in C^{\infty}_c(e)$, and $i_b = \min\{i\in\mathbb{N}: \widehat{\lambda}_i>0\}$. Applying Corollary~\ref{corr:eigenfunctions_derivative_bound} and by calculations similar to~\eqref{eq:H_Du_eq_1} we have
\begin{equation*}
    \begin{split}
        \sum_{i=i_b}^{\infty}|\langle u,\varphi_i\rangle|\int_e|D^j\varphi_i\psi|&\leq C\left(\sup_{x\in e}|\psi(x)|\right)\sum_{i=i_b}^{\infty}\widehat{\lambda}_i^{j/2}|\langle u,\varphi_i\rangle|\\&\leq C\left(\sup_{x\in e}|\psi(x)|\right)\sqrt{\sum_{i=i_b}^{\infty}\lambda_i^{\alpha}\langle u,\varphi_i\rangle^2}\sqrt{\sum_{i=i_b}^{\infty}\frac{\widehat{\lambda}_i^{j}}{\lambda_i^{\alpha}}}<\infty.
    \end{split}
\end{equation*}
Therefore $(-1)^j\sum_{i=1}^{\infty}\langle u,\varphi_i\rangle D^j\varphi_i$ converges uniformly to a limit. This limit must be $D^ju$ since by uniform convergence we have
\begin{equation*}
    \begin{split}
        \int_e uD^j\psi&=\sum_{i=1}^{\infty}\langle u,\varphi_i\rangle\int_e \varphi_iD^j\psi= (-1)^{j}\int_e\left(\sum_{i=1}^{\infty}\langle u,\varphi_i\rangle D^j\varphi_i\right)\psi.\\
    \end{split}
\end{equation*}
Furthermore, because the absolute uniform convergence preserves continuity and implies point-wise convergence, it follows that for every even $j<\alpha-\frac{1}{2}$ we have $D^ju\in C(\Gamma)\cap D(\Gamma)$, this is a consequence of $D^j\varphi_i,\varphi_i\in C(\Gamma)\cap D(\Gamma)$ for every $i\in\mathbb{N}$ and even $j\in\mathbb{N}$. The function $D^ju\in\dot{H}^{\alpha-j}$ belongs to $\dot{H}^{\alpha-j}$ because
\begin{equation*}
\begin{split}
    \sum_{i=1}^{\infty}\lambda_i^{\alpha-j}\langle u,D^j\varphi_i\rangle^2 = \sum_{i=1}^{\infty}\lambda_i^{\alpha-j}(\kappa^2+\hat{\lambda}_i^{j/2})^2\langle u,\varphi_i\rangle^2 < C\sum_{i=1}^{\infty}\lambda_i^{\alpha}\langle u,\varphi_i\rangle^2.
\end{split}
\end{equation*}
Analogous argument shows that
\begin{equation}\label{eq:partial_e_1}
    \partial_e w = \sum_{i=1}^{\infty}\langle w,\varphi_i\rangle\partial_e\varphi_i\quad\quad \forall w\in\bigcup_{s>\frac{3}{2}}\dot{H}^s.
\end{equation}
To show that $u\in K^{\floor{\frac{\alpha}{2}+\frac{1}{4}}}$, let $j<\alpha-\frac{3}{2}$ be a non-negative even integer and $v$ be a vertex. Applying \eqref{eq:partial_e_1} to $w=D^ju$ we have
$$\sum_{e\in\mathcal{E}_v}\partial_eD^{j}u = \sum_{i=1}^{\infty}(-\widehat{\lambda}_i)^{\frac{j}{2}}\langle u,\varphi_i\rangle \sum_{e\in\mathcal{E}_v}\partial_e \varphi_i = 0.$$
It remains to show that Equation \eqref{eq:Du} holds for
non-negative even integers $j$ in the interval $[\alpha-\frac{1}{2},\alpha]$. For $\alpha<2$ this reduces to $u=\sum_{i=1}^{\infty}\langle u,\varphi_i\rangle\varphi_i$. Assume $\alpha,j\geq2$ and let $w=D^{j-2}u\in\dot{H}^{\alpha-j+2}\subset \dot{H}^2$, by Lemma~\ref{lem:H_dot_H_direct_1} we have $\dot{H}^2\subset\bigoplus_{e\in\mathcal{E}}H^2(e)\cap \dot{H}^1$. Thus
\begin{equation*}
    \begin{split}
    D^ju &= (\kappa^2 - L)w = \kappa^2\sum_{i=1}^{\infty}\langle w,\varphi_i\rangle\varphi_i - \sum_{i=1}^{\infty}\lambda_i\langle w,\varphi_i\rangle\varphi_i\\
    &= \sum_{i=1}^{\infty}(-\widehat{\lambda}_i)\langle w,\varphi_i\rangle\varphi_i = \sum_{i=1}^{\infty}(-\widehat{\lambda}_i)^{\frac{j}{2}}\langle u,\varphi_i\rangle\varphi_i,      
    \end{split}
\end{equation*}
where the last equality follows from the symmetry of the operator $D^{j-2}$ on the space $\dot{H}^{\alpha-j+2}$. Finally $D^ju\in \dot{H}^{\alpha-j}$ since $L$ is a bounded operator from $\dot{H}^{\alpha-j+2}$ into $\dot{H}^{\alpha-j}$.
\end{proof}
In what follows, the set $\mathbb{A} = (0,\infty)\backslash\{k+\frac{1}{2}:k\in\mathbb{N}_0\}$ plays a central role in the statement of the main characterization results in this and the following section.
\begin{lemma}\label{lem:H_density_1}
    Let $\alpha\in\mathbb{A}$ and $k>\alpha$ an integer. Then $H^{2k}(\Gamma)\cap K^{k}(\Gamma)\cap D^{k}(\Gamma)$ is dense in $H^{\alpha}(\Gamma)\cap K^{\floor{\frac{\alpha}{2}+\frac{1}{4}}}(\Gamma)\cap D^{\floor{\frac{\alpha}{2}+\frac{3}{4}}}(\Gamma)$.
\end{lemma}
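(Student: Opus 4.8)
The plan is to construct, for any $u$ in the target space $X:=H^{\alpha}(\Gamma)\cap K^{\floor{\frac{\alpha}{2}+\frac14}}(\Gamma)\cap D^{\floor{\frac{\alpha}{2}+\frac34}}(\Gamma)$, a sequence $u_n\in Y:=H^{2k}(\Gamma)\cap K^{k}(\Gamma)\cap D^{k}(\Gamma)$ with $\|u_n-u\|_{H^{\alpha}(\Gamma)}\to0$. First I would record two structural facts. Since $2k>\alpha$, Theorem~\ref{thm:sobolev_subspace} together with the evident containments of the $K^{\bullet}$ and $D^{\bullet}$ spaces gives $Y\subset X$. More importantly, every condition defining $X$ — continuity of $D^{2j}$ across vertices, the Kirchhoff balance of $\partial_eD^{2j}$, and the Dirichlet vanishing of $D^{2j}$ — is a constraint on a derivative of order strictly less than $\alpha-\frac12$; because $\alpha\in\mathbb{A}$ excludes the half-integers, none of these orders equals the critical value $\alpha-\frac12$, so each such trace is a continuous linear functional on $\bigoplus_{e}H^{\alpha}(e)$, whence $X$ is a closed subspace of $\bigoplus_e H^\alpha(e)$ on which, by Theorem~\ref{thm:W_chatacterization_3}, the norm $\|\cdot\|_{H^\alpha(\Gamma)}$ is equivalent to the edgewise norm $\|\cdot\|_{\bigoplus_e H^\alpha(e)}$.

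The construction proceeds in two steps. In the first step I mollify $u$ edgewise to obtain $\tilde u_n\in\bigoplus_e C^\infty(\overline{e})$ with $\tilde u_n\to u$ in $\bigoplus_e H^\alpha(e)$ (cf.~\citeA[Theorem~2.4]{di_nezza}), and then add a small smooth correction that restores the finitely many trace identities defining $X$. Since those traces are continuous functionals and $\tilde u_n\to u$, the mismatches tend to $0$, so the correction can be taken with vanishing $H^\alpha$-norm; this yields $g_n\in X$, smooth on each edge (hence in $\bigoplus_e H^{2k}(e)$), with $g_n\to u$ in $H^\alpha(\Gamma)$.

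In the second step I upgrade $g_n$ from the conditions of $X$ to those of $Y$. Writing $m_0=\floor{\alpha-\frac12}$, I observe that the $X$-conditions are exactly the continuity/Kirchhoff/Dirichlet constraints on vertex derivative data of order $\le m_0$, whereas $Y$ demands the analogous constraints up to order $2k-1$; indeed the largest relevant even, odd, and Dirichlet orders $2\floor{\frac{\alpha}{2}-\frac14}$, $2\floor{\frac{\alpha}{2}+\frac14}-1$, and $2\floor{\frac{\alpha}{2}+\frac34}-2$ all coincide with the largest integers of the appropriate parity not exceeding $m_0$ (using $\floor{\frac{\alpha}{2}+\frac34}=\floor{\frac{\alpha}{2}-\frac14}+1$). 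Hence I subtract from $g_n$ a correction $c_n$, supported on intervals $[0,\delta_n]$ at each vertex, whose sole purpose is to annihilate all vertex derivative data of order $m_0+1,\dots,2k-1$ while leaving the data of order $\le m_0$ untouched; explicitly $c_n$ equals, in each incident edge coordinate, $\psi(x/\delta_n)\sum_{m=m_0+1}^{2k-1}\frac{(D^m g_n)(v)}{m!}x^m$ with $\psi$ a fixed cutoff equal to $1$ near $0$. After this, all high-order vertex data vanish identically, so the continuity, Kirchhoff, and Dirichlet conditions of $Y$ at those orders hold trivially, while at orders $\le m_0$ they reduce to the $X$-conditions already met by $g_n$; thus $u_n:=g_n-c_n\in Y$.

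The main obstacle is keeping the high-order correction small, i.e.\ $\|c_n\|_{H^\alpha(\Gamma)}\to0$, and this is where $\alpha\in\mathbb{A}$ is decisive. The scaling computation $\|x^m\psi(x/\delta)\|_{H^\alpha(\mathbb{R})}^2\asymp\delta^{2m+1-2\alpha}$ shows that the $m$-th building block vanishes as $\delta\to0$ precisely when $m>\alpha-\frac12$, and every correction order $m\ge m_0+1=\floor{\alpha-\frac12}+1$ satisfies this strict inequality exactly because $\alpha-\frac12$ is not an integer. The data $(D^m g_n)(v)$ are finite for each fixed $n$ (as $g_n$ is smooth on edges) though not uniformly bounded in $n$; I therefore fix $g_n$ first and only then choose $\delta_n$ small enough that $\|c_n\|_{H^\alpha(\Gamma)}<1/n$. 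Since $c_n$ has vanishing derivatives up to order $m_0\ge 2\floor{\frac{\alpha}{2}-\frac14}$ at the vertices, it lies in $H^\alpha(\Gamma)$, so the norm equivalence on $X$ applies and $\|u_n-u\|_{H^\alpha(\Gamma)}\le\|g_n-u\|_{H^\alpha(\Gamma)}+\|c_n\|_{H^\alpha(\Gamma)}\to0$, which completes the argument.
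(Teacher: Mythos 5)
Your proof is correct, and it takes a genuinely different route from the paper's. The paper argues in one stroke: it subtracts from $u$ an edgewise polynomial $P$ interpolating the vertex derivative data of $u$ up to the orders constrained by the target space (and with vanishing data beyond, up to order $2k$), observes that $u-P\in\bigoplus_{e\in\mathcal{E}} H^{\alpha}_0(e)$, and approximates $u-P$ by $v_n\in\bigoplus_{e\in\mathcal{E}} C^{\infty}_c(e)$, so that $v_n+P\in H^{2k}(\Gamma)\cap K^{k}(\Gamma)\cap D^{k}(\Gamma)$; all the analytic content is outsourced to the classical density of $C^{\infty}_c$ in $H^{\alpha}_0$ on an interval. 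You instead mollify edgewise, repair the finitely many continuous trace functionals to land in $X$, and then annihilate the vertex data of orders $m_0+1,\dots,2k-1$ with scaled cutoff monomials, proving smallness via the explicit scaling $\|x^m\psi(x/\delta)\|_{H^{\alpha}(\mathbb{R})}^2\asymp\delta^{2m+1-2\alpha}$. That scaling computation is precisely what the paper's appeal to $H^{\alpha}_0$-density encapsulates: a function whose traces vanish at all orders below $\alpha-\frac{1}{2}$ can be pushed off the boundary at small $H^{\alpha}$ cost; your version is longer but quantitative and self-contained, while the paper's is shorter but leans on the vanishing-trace characterization of $H^{\alpha}_0$. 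Your parity bookkeeping (the largest even continuity, odd Kirchhoff, and even Dirichlet orders constrained by $X$ all equal the largest integers of the right parity not exceeding $m_0=\floor{\alpha-\frac{1}{2}}$) checks out, and is in fact tidier than the paper's interpolation prescription, which matches $\partial_e^i P_e(v)=\partial_e^i u_e(v)$ for all $i<2\floor{\frac{\alpha}{2}+\frac{3}{4}}$ and thereby over-specifies (for $\alpha=2.7$ this asks for the order-$3$ trace of $u\in H^{2.7}(e)$, which need not exist). One small misattribution in your final paragraph: the strict inequality $m\geq m_0+1>\alpha-\frac{1}{2}$ holds for \emph{every} real $\alpha$, since $\floor{x}+1>x$ always; the hypothesis $\alpha\in\mathbb{A}$ is genuinely needed where you first invoked it, namely to guarantee that no constraint defining $X$ sits at the critical order $\alpha-\frac{1}{2}$, where the trace fails to be continuous on $H^{\alpha}(e)$ and the vanishing-trace description of $H^{\alpha}_0$ breaks down --- not for the scaling step itself.
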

\begin{proof}
    Let $u\in H^{\alpha}(\Gamma)\cap K^{\floor{\frac{\alpha}{2}+\frac{1}{4}}}(\Gamma)\cap D^{\floor{\frac{\alpha}{2}+\frac{3}{4}}}(\Gamma)$. Define $P:\Gamma\to\mathbb{R}$ as a polynomial on each edge $e$ that satisfies the constraints $\partial_e^iP_e(v)=\partial_e^i u_e(v)$ for every vertex $v\in e$ and $0\leq i< 2\floor{\frac{\alpha}{2}+\frac{3}{4}}$ and $\partial_e^iP_e(v)=0$ for every $2\floor{\frac{\alpha}{2}+\frac{3}{4}}\leq i\leq 2k$. With this construction we have $u-P\in\bigoplus_{e\in\mathcal{E}} H^{\alpha}_{0}$. Defining $v_n\in\bigoplus_{e\in\mathcal{E}} C^{\infty}_{c}$ such that $v_n\to u-P$ in $H^{\alpha}(\Gamma)$, the result follows because $v_n+P\in H^{2k}(\Gamma)\cap K^k(\Gamma)\cap D^k(\Gamma)$.
\end{proof}

\begin{lemma}\label{lem:H_alpha_H_dot_inclusion}
    For $\alpha\in\mathbb{N}$, we have
    $$H^{2\alpha}(\Gamma)\cap K^{\alpha}(\Gamma)\cap D^{\alpha}(\Gamma)\subset \dot{H}^{\alpha}.$$
\end{lemma}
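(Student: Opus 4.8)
The plan is to reduce the membership $u\in\dot H^{\alpha}$ to the single spectral identity
\begin{equation*}
\langle L^{\alpha}u,\varphi_i\rangle=\lambda_i^{\alpha}\langle u,\varphi_i\rangle,\qquad i\in\mathbb{N},
\tag{$\ast$}
\end{equation*}
where $L=\kappa^2-\Delta_{\Gamma}$. Granting $(\ast)$, note first that since $u\in\bigoplus_{e\in\mathcal{E}}H^{2\alpha}(e)$ the function $L^{\alpha}u=(\kappa^2-\Delta_{\Gamma})^{\alpha}u$ is a linear combination of $D^{2k}u=\Delta^k u$, $0\le k\le\alpha$, each lying in $\mathbb{L}_2(\Gamma)$, whence $L^{\alpha}u\in\mathbb{L}_2(\Gamma)$. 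Because $\{\varphi_i\}$ is an orthonormal basis and $\lambda_i>0$, Parseval's identity together with $(\ast)$ gives
$$\sum_{i=1}^{\infty}\lambda_i^{\alpha}\langle u,\varphi_i\rangle^2=\sum_{i=1}^{\infty}\langle L^{\alpha}u,\varphi_i\rangle\langle u,\varphi_i\rangle=\langle L^{\alpha}u,u\rangle\le\|L^{\alpha}u\|_{\mathbb{L}_2(\Gamma)}\|u\|_{\mathbb{L}_2(\Gamma)}<\infty,$$
and as the summands are nonnegative this is exactly $u\in\dot H^{\alpha}$.

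To establish $(\ast)$ I would argue by induction, proving at each step that $\langle L(L^ju),\varphi_i\rangle=\lambda_i\langle L^ju,\varphi_i\rangle$ for $0\le j\le\alpha-1$, and then iterating from $j=0$ up to $j=\alpha-1$. The crucial structural observation is that $w_j:=L^ju$ is a polynomial of degree $j\le\alpha-1$ in $\Delta_{\Gamma}$ applied to $u$, hence a linear combination of $D^{2k}u$ with $0\le k\le\alpha-1$. Each such $D^{2k}u$ is continuous across the vertices (this is precisely the constraint encoded in $H^{2\alpha}(\Gamma)$, since $k\le\alpha-1=\floor{\frac{2\alpha-1}{2}}$), satisfies the Kirchhoff condition at every $v\in\mathcal{V}_K$ (because $D^{2k}u\in K(\Gamma)$, from $u\in K^{\alpha}(\Gamma)$), and satisfies the Dirichlet condition at every $v\in\mathcal{V}_D$ (because $D^{2k}u\in D(\Gamma)$, from $u\in D^{\alpha}(\Gamma)$). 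Thus $w_j$ inherits all three vertex conditions; moreover $w_j\in\bigoplus_{e\in\mathcal{E}}H^{2(\alpha-j)}(e)\subset\bigoplus_{e\in\mathcal{E}}H^{2}(e)$, so its values and first derivatives admit well-defined traces at every vertex.

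The symmetry $\langle\Delta_{\Gamma}w_j,\varphi_i\rangle=\langle w_j,\Delta_{\Gamma}\varphi_i\rangle$ then follows from edge-wise integration by parts (Green's second identity), whose boundary contributions regroup at the vertices as
$$\sum_{v\in\mathcal{V}}\left(w_j(v)\sum_{(e,\zeta)\in\mathrm{Dir}(v)}\partial_{e,\zeta}\varphi_i(v)-\varphi_i(v)\sum_{(e,\zeta)\in\mathrm{Dir}(v)}\partial_{e,\zeta}w_j(v)\right),$$
where the continuity of $w_j$ and of $\varphi_i$ (recall $\varphi_i\in\mathcal{D}(\Delta_{\Gamma})$, with $\varphi_i\in\bigoplus_{e\in\mathcal{E}}C^{\infty}(e)$ by Lemma~\ref{lem:smooth_eigenfunctions}) is exactly what permits factoring the single vertex values out of the sums over $\mathrm{Dir}(v)$. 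At a Kirchhoff vertex both inner derivative sums vanish (Kirchhoff for $w_j$ and for $\varphi_i$), while at a Dirichlet vertex both $w_j(v)$ and $\varphi_i(v)$ vanish; hence every boundary term is zero. Using $\Delta_{\Gamma}\varphi_i=-\widehat{\lambda}_i\varphi_i$ and $L=\kappa^2-\Delta_{\Gamma}$ this yields $\langle Lw_j,\varphi_i\rangle=\lambda_i\langle w_j,\varphi_i\rangle$, completing the induction and proving $(\ast)$.

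The main obstacle is the bookkeeping of regularity and boundary conditions across the $\alpha$ integration-by-parts steps: at each step one must certify that the function to which Green's identity is applied both carries enough edge regularity to take vertex traces and simultaneously fulfils the continuity, Kirchhoff, and Dirichlet conditions. This is guaranteed precisely by the three intersected spaces defining the domain—$H^{2\alpha}(\Gamma)$ supplies continuity of the even derivatives $D^{2k}u$ for $k\le\alpha-1$, while $K^{\alpha}(\Gamma)$ and $D^{\alpha}(\Gamma)$ supply the vertex derivative and value conditions for the same range of $k$—so that the index ranges align exactly with the number of required iterations. The remaining ingredients (positivity of the $\lambda_i$, completeness of $\{\varphi_i\}$, and edge-wise smoothness of the eigenfunctions) are already available from Section~\ref{sec:eigen_bounds} and the spectral setup.
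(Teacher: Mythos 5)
Your proof is correct, and it is built from the same two ingredients as the paper's proof --- the propagation of vertex conditions under even derivatives, and one application of self-adjointness of $L$ per power --- but you assemble them along a genuinely different route. The paper argues by induction on $\alpha$ through the bootstrap implication \eqref{eq:H_alpha_1_claim_1}, namely that $f\in H^2(\Gamma)\cap K(\Gamma)\cap D(\Gamma)$ with $\Delta_{\Gamma}f\in\dot{H}^{\alpha-2}$ forces $f\in\dot{H}^{\alpha}$, combined with the propagation statement \eqref{eq:H_alpha_1_claim_2}; it thus descends one step at a time through the fractional scale, invoking self-adjointness of $L$ on its domain abstractly. You instead unroll the induction completely: you prove the exact coefficient identity $\langle L^{\alpha}u,\varphi_i\rangle=\lambda_i^{\alpha}\langle u,\varphi_i\rangle$ by $\alpha$ explicit applications of Green's identity with vertex-by-vertex cancellation, and close with a single Parseval/Cauchy--Schwarz step
\begin{equation*}
\sum_{i=1}^{\infty}\lambda_i^{\alpha}\langle u,\varphi_i\rangle^2=\langle L^{\alpha}u,u\rangle\leq\|L^{\alpha}u\|_{\mathbb{L}_2(\Gamma)}\|u\|_{\mathbb{L}_2(\Gamma)},
\end{equation*}
which needs only $L^{\alpha}u\in\mathbb{L}_2(\Gamma)$ (immediate from $u\in\bigoplus_{e\in\mathcal{E}}H^{2\alpha}(e)$ and $\kappa$ constant) and avoids tracking the intermediate memberships $\Delta_{\Gamma}f\in\dot{H}^{\alpha-2}$ --- incidentally sidestepping the minor index slips in the paper's displayed estimate. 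Your bookkeeping is exactly right: $H^{2\alpha}(\Gamma)$ gives continuity of $D^{2k}u$ for $k\leq\floor{\frac{2\alpha-1}{2}}=\alpha-1$, matching the ranges supplied by $K^{\alpha}(\Gamma)$ and $D^{\alpha}(\Gamma)$, so each $w_j=L^ju$ with $j\leq\alpha-1$ lies edge-wise in $H^2$ and satisfies all three vertex conditions, making every boundary term vanish; note that factoring $\varphi_i(v)$ out of the $\textrm{Dir}(v)$-sums also uses continuity of the eigenfunctions at Kirchhoff vertices, which holds since $\varphi_i\in\dot{H}^2\subset C(\Gamma)$ (cf.\ Lemma~\ref{lem:smooth_eigenfunctions} and the surrounding setup). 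What the paper's packaging buys in exchange is reusability: the one-step claim \eqref{eq:H_alpha_1_claim_1} is invoked again in Case~6 of the proof of Theorem~\ref{thm:main_1} for non-integer $\alpha$, where your fully unrolled integer-power identity would not directly apply; what your packaging buys is a self-contained argument with an explicit quantitative output, the bound $\|u\|_{\alpha}^2\leq\|L^{\alpha}u\|_{\mathbb{L}_2(\Gamma)}\|u\|_{\mathbb{L}_2(\Gamma)}$, rather than a pure membership statement.
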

\begin{proof}
    For $\alpha=1$, the statement follows from the definition of $\dot{H}^1$. For $\alpha\geq 2$, first note that
    \begin{equation}\label{eq:H_alpha_1_claim_1}
      \left\{f\in H^2(\Gamma)\cap K(\Gamma)\cap D(\Gamma),\;\Delta_{\Gamma} f\in\dot{H}^{\alpha-2}\right\} \Rightarrow f\in\dot{H}^{\alpha}.  
    \end{equation}
    This follows from the calculations
    \begin{equation*}
        \begin{split}
            \sum_{i=1}^{\infty}\lambda_i^{\alpha}\langle f,\varphi_i\rangle &= \sum_{i=1}^{\infty}\lambda_{i-1}^{\alpha}\langle f,L\varphi_i\rangle^2=\sum_{i=1}^{\infty}\lambda_{i}^{\alpha-1}\langle Lf,\varphi_i\rangle^2\\
            &\leq2\kappa^2\sum_{i=1}^{\infty}\lambda_{i}^{\alpha-2}\langle f,\varphi_i\rangle^2 + 2\sum_{i=1}^{\infty}\lambda_{i}^{\alpha-2}\langle \Delta_{\Gamma}f,\varphi_i\rangle^2<\infty.
        \end{split}
    \end{equation*}
    Also, by definition, we have
    \begin{equation}\label{eq:H_alpha_1_claim_2}
        \begin{split}
            f\in H^{2\alpha}(\Gamma)\cap K^{\alpha}(\Gamma)\cap D^{\alpha}(\Gamma)&\Rightarrow \Big\{f\in H^2(\Gamma)\cap K(\Gamma)\cap D(\Gamma),\\& \mathrel{\hphantom{\Rightarrow}}\Delta_{\Gamma} f\in H^{2\alpha-2}(\Gamma)\cap K^{\alpha-1}(\Gamma)\cap D^{\alpha-1}(\Gamma)\Big\}.
        \end{split}
    \end{equation}Now, we use induction as follows. Suppose that $H^{2\alpha}(\Gamma)\cap K^{\alpha}(\Gamma)\cap D^{\alpha}(\Gamma)\subset \dot{H}^{\alpha}$ for $\alpha\geq1$ and let $u\in H^{2\alpha+2}(\Gamma)\cap K^{\alpha+1}(\Gamma)\cap D^{\alpha+1}(\Gamma)$. By \eqref{eq:H_alpha_1_claim_2} and the induction hypothesis we have that $u\in H^{2}(\Gamma)\cap K(\Gamma)\cap D(\Gamma)$, and $\Delta_{\Gamma} u\in H^{2\alpha}(\Gamma)\cap K^{\alpha}(\Gamma)\cap D^{\alpha}(\Gamma)\subset\dot{H}^{\alpha}$. Then, by \eqref{eq:H_alpha_1_claim_1} we have $u\in\dot{H}^{\alpha+1}$.
\end{proof}
The following identification is the cornerstone of our framework. It shows that the energetic spaces of the operator $L^{\alpha}=(\kappa^2-\Delta)^{\alpha}$ coincide exactly with the intrinsic Sobolev spaces we have constructed coupled with appropriate vertex conditions, thereby justifying their use for PDE analysis on metric graphs.
\begin{theorem}\label{thm:main_1}
    For $\alpha\in\mathbb{A}$, we have
    $$\dot{H}^{\alpha}\cong H^{\alpha}(\Gamma)\cap K^{\floor{\frac{\alpha}{2}+\frac{1}{4}}}(\Gamma)\cap D^{\floor{\frac{\alpha}{2}+\frac{3}{4}}}(\Gamma).$$
\end{theorem}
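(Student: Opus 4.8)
The plan is to establish the two continuous inclusions separately, the forward one being essentially a repackaging of the preceding lemmas and the reverse one carrying the real content. For the forward inclusion $\dot{H}^{\alpha}\hookrightarrow H^{\alpha}(\Gamma)\cap K^{\floor{\frac{\alpha}{2}+\frac14}}(\Gamma)\cap D^{\floor{\frac{\alpha}{2}+\frac34}}(\Gamma)$, take $u\in\dot{H}^{\alpha}$: Lemma~\ref{lem:H_dot_H_direct_1} places $u$ in $\bigoplus_{e\in\mathcal{E}}H^{\alpha}(e)$, while Lemma~\ref{lem:H_dot_H_Du} supplies both the continuity $u\in C^{(2\floor{\frac{\alpha}{2}-\frac14})}(\Gamma)$ and the membership $u\in K^{\floor{\frac{\alpha}{2}+\frac14}}(\Gamma)\cap D^{\floor{\frac{\alpha}{2}+\frac34}}(\Gamma)$. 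Since $\alpha\in\mathbb{A}$ avoids the exceptional sets of Theorems~\ref{thm:H_W_equivalence} and~\ref{thm:W_chatacterization_3} (with $p=2$), these two facts combine to give $u\in\bigoplus_{e\in\mathcal{E}} W^{\alpha,2}(e)\cap C^{(2\floor{\frac{\alpha}{2}-\frac14})}(\Gamma)\cong W^{\alpha,2}(\Gamma)\cong H^{\alpha}(\Gamma)$, where on this subspace the $H^{\alpha}(\Gamma)$-norm is equivalent to $\|\cdot\|_{\bigoplus_e H^{\alpha}(e)}$; the quantitative bounds inside Lemmas~\ref{lem:H_dot_H_direct_1} and~\ref{lem:H_dot_H_Du} then yield $\|u\|_{H^{\alpha}(\Gamma)}\le C\|u\|_{\alpha}$, the $K$- and $D$-constraints contributing no extra norm.

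For the reverse inclusion, write $X_{\alpha}=H^{\alpha}(\Gamma)\cap K^{\floor{\frac{\alpha}{2}+\frac14}}(\Gamma)\cap D^{\floor{\frac{\alpha}{2}+\frac34}}(\Gamma)$ and argue by density plus a single norm estimate. Fix an integer $k>\alpha$; by Lemma~\ref{lem:H_density_1} the space $Y_k:=H^{2k}(\Gamma)\cap K^{k}(\Gamma)\cap D^{k}(\Gamma)$ is dense in $X_{\alpha}$, and by Lemma~\ref{lem:H_alpha_H_dot_inclusion} together with the monotonicity $\dot{H}^{k}\hookrightarrow\dot{H}^{\alpha}$ (immediate from $\inf_i\lambda_i=\lambda_1>0$) we have $Y_k\subset\dot{H}^{\alpha}$. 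It therefore suffices to prove the \emph{key estimate}
$$\|v\|_{\alpha}\le C\,\|v\|_{H^{\alpha}(\Gamma)}\qquad\text{for all }v\in Y_k,$$
for then an $X_{\alpha}$-Cauchy sequence in $Y_k$ is $\dot{H}^{\alpha}$-Cauchy, and since $\dot{H}^{\alpha}\hookrightarrow\mathbb{L}_2(\Gamma)$ its $\dot{H}^{\alpha}$-limit agrees in $\mathbb{L}_2(\Gamma)$ with its $X_{\alpha}$-limit, giving the continuous reverse embedding and hence $\dot{H}^{\alpha}\cong X_{\alpha}$.

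I would prove the key estimate by induction, reducing $\alpha$ to $\alpha-2$. The crucial algebraic identity is that $\langle Lv,\varphi_i\rangle=\lambda_i\langle v,\varphi_i\rangle$ gives \emph{exactly} $\|v\|_{\alpha}=\|Lv\|_{\alpha-2}$; moreover $L=\kappa^2-\Delta_{\Gamma}$ maps $Y_k$ into $Y_{k-1}$ and is bounded $H^{\alpha}(\Gamma)\to H^{\alpha-2}(\Gamma)$ (the $\Delta_{\Gamma}=D^2$ part via the Sobolev--Slobodeckij structure of Definition~\ref{def:W_Gamma_1} and the edgewise bound $D\colon W^{s,2}(e)\to W^{s-1,2}(e)$). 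Assuming the estimate at level $\alpha-2$ applied to $Lv\in Y_{k-1}$, we obtain $\|v\|_{\alpha}=\|Lv\|_{\alpha-2}\le C\|Lv\|_{H^{\alpha-2}(\Gamma)}\le C\|v\|_{H^{\alpha}(\Gamma)}$, and the induction closes when $\alpha-2\le 0$; note $\alpha\in\mathbb{A}\Rightarrow\alpha-2\in\mathbb{A}$, so the reduction stays admissible.

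This leaves the base range $\alpha\in(0,2)\cap\mathbb{A}$, which is where the genuine work sits and which I expect to be the main obstacle. The endpoints are identified directly: $\dot{H}^1=H^1(\Gamma)\cap D(\Gamma)$ as the form domain of $L$ (the Kirchhoff flux being a natural condition and all boundary terms vanishing under $K$ and $D$), and $\dot{H}^2=\mathcal{D}(\Delta_{\Gamma})=H^2(\Gamma)\cap K(\Gamma)\cap D(\Gamma)$ with $\|v\|_2=\|Lv\|_{\mathbb{L}_2}\asymp\|v\|_{H^2(\Gamma)}$ from edgewise elliptic regularity. Using $\dot{H}^{\alpha}=[\mathbb{L}_2(\Gamma),\dot{H}^1]_{\alpha}$ on $(0,1)$ and $\dot{H}^{\alpha}=[\dot{H}^1,\dot{H}^2]_{\alpha-1}$ on $(1,2)$, the remaining task is to prove the interpolation-with-boundary-conditions identities $[\mathbb{L}_2(\Gamma),H^1(\Gamma)\cap D(\Gamma)]_{\alpha}\cong X_{\alpha}$ and $[H^1(\Gamma)\cap D(\Gamma),H^2(\Gamma)\cap K(\Gamma)\cap D(\Gamma)]_{\alpha-1}\cong X_{\alpha}$. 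The delicate point is exactly the thresholds $\alpha=\tfrac12$ and $\alpha=\tfrac32$: the Dirichlet constraint (a trace of $u$) must survive precisely for $\alpha>\tfrac12$ and the Kirchhoff constraint (a trace of $Du$) precisely for $\alpha>\tfrac32$, while each washes out below its threshold, which is why $\mathbb{A}$ excludes all half-integers. I would reduce these to the classical interval statements, treating the finitely many vertex constraints (continuity across vertices, Dirichlet values, Kirchhoff fluxes) as bounded trace functionals and invoking the interpolation-of-subspaces machinery already used in the paper (e.g.\ \citeA{mclean,chandler}); since point evaluation and first-derivative trace are bounded on $H^{s}(e)$ exactly for $s>\tfrac12$ and $s>\tfrac32$ respectively, these furnish the two thresholds and pin down $X_\alpha$ on the nose.
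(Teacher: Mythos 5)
Your proposal tracks the paper's proof closely over most of the range: the forward inclusion via Lemmas~\ref{lem:H_dot_H_direct_1} and~\ref{lem:H_dot_H_Du} is exactly the paper's; your density-plus-key-estimate scheme, with the exact identity $\|v\|_{\alpha}=\|Lv\|_{\alpha-2}$ driving an induction in steps of two, is in substance the paper's Case~1 (closedness of $\dot{H}^{\alpha}$ in $\bigoplus_{e\in\mathcal{E}}H^{\alpha}(e)$ via an energetic-norm estimate, plus Lemmas~\ref{lem:H_density_1} and~\ref{lem:H_alpha_H_dot_inclusion}) combined with Case~6 (the reduction through the implication \eqref{eq:H_alpha_1_claim_1}); and on $(0,1)$ your interpolation identities are the paper's Cases~2 and~3 — though note the paper only ever extracts \emph{one-sided} inclusions from interpolation there, getting the other direction from Lemmas~\ref{lem:H_dot_H_direct_1} and~\ref{lem:H_dot_H_Du}, so it never needs a full equality of interpolation spaces with constraints.

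The genuine gap is your treatment of $\alpha\in(1,2)$. You reduce it to $[\dot{H}^1,\dot{H}^2]_{\alpha-1}\cong X_{\alpha}$ and justify this by treating the vertex constraints as bounded trace functionals and invoking ``interpolation-of-subspaces machinery already used in the paper,'' citing McLean and Chandler-Wilde et al. Those references contain nothing of the needed form: McLean's Theorems~3.40 and~B.8 handle order-zero traces ($H^s_0$-type statements), not the Kirchhoff flux, which is a first-derivative trace with threshold $3/2$ built into the \emph{upper} space $\dot{H}^2$ of the couple. Worse, the boundedness-threshold heuristic only delivers the easy inclusion $[\dot{H}^1,\dot{H}^2]_{\alpha-1}\subset X_{\alpha}$, which you already have for free since that interpolation space is $\dot{H}^{\alpha}$ and Lemmas~\ref{lem:H_dot_H_direct_1}--\ref{lem:H_dot_H_Du} apply. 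The direction carrying the content is $X_{\alpha}\subset[\dot{H}^1,\dot{H}^2]_{\alpha-1}$: for $\alpha\in(1,\tfrac32)$ you must show that $u\in H^{\alpha}(\Gamma)\cap D(\Gamma)$, which need \emph{not} satisfy the Kirchhoff condition, admits decompositions $u=u_0+u_1$ with $u_1\in\dot{H}^2$ (hence satisfying Kirchhoff exactly) and the correct $K$-functional decay; unboundedness of the flux functional on $H^{\alpha}$ does not manufacture such decompositions. This is precisely the delicate ``interpolation of subspaces'' problem (Grisvard 1967; L\"ofstr\"om 1992 for Neumann-type conditions; Ivanov--Kalton 2001), and on a metric graph with several simultaneous constraints of different thresholds (continuity, Dirichlet, Kirchhoff at each vertex) it must be assembled by hand. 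The paper sidesteps exactly this: on $(1,\tfrac32)$ it proves $H^{\alpha}(\Gamma)\cap D(\Gamma)\hookrightarrow\dot{H}^{\alpha}$ directly by the duality estimate $\|u\|_{\dot{H}^{\alpha}}=\sup_{\|v\|_{\mathbb{L}_2(\Gamma)}=1}\langle L^{\alpha/2}u,v\rangle\leq C\|u\|_{H^{\alpha}(\Gamma)}$, exploiting $H^{\alpha-1}_0(e)=H^{\alpha-1}(e)$ (valid precisely because $\alpha-1<\tfrac12$), then extends by density; on $(\tfrac32,2)$ it argues as in Case~1. Your route is repairable by importing the subspace-interpolation theorems, but as written the central step on $(1,2)$ is asserted rather than proved, and the tools you cite do not cover it.
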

\begin{proof}
We prove the statement by considering six different cases.\hfill\\
\textbf{Case 1:} $\alpha\in\mathbb{N}$. Let $k\geq\alpha+1$ be an integer. By Lemma~\ref{lem:H_alpha_H_dot_inclusion} we have $$H^{2k}(\Gamma)\cap K^{k}(\Gamma)\cap D^{k}(\Gamma)\subset \dot{H}^{\alpha+1}\subset \dot{H}^{\alpha}.$$ Moreover, Lemmas \ref{lem:H_dot_H_direct_1} and \ref{lem:H_dot_H_Du} imply the inclusion
\begin{equation}\label{eq:H_H_dot_iclusion_1}
    H^{2k}(\Gamma)\cap K^{k}(\Gamma)\cap D^{k}(\Gamma)\subset \dot{H}^{\alpha}\subset H^{\alpha}(\Gamma)\cap K^{\floor{\frac{\alpha}{2}+\frac{1}{4}}}(\Gamma)\cap D^{\floor{\frac{\alpha}{2}+\frac{3}{4}}}(\Gamma).
\end{equation}
    By Lemma~\ref{lem:H_density_1} the subpace $H^{2k}(\Gamma)\cap K^{k}(\Gamma)\cap D^{k}(\Gamma)$ is dense in $H^{\alpha}(\Gamma)\cap K^{\floor{\frac{\alpha}{2}+\frac{1}{4}}}(\Gamma)\cap D^{\floor{\frac{\alpha}{2}+\frac{3}{4}}}(\Gamma)$. To conclude equality from \eqref{eq:H_H_dot_iclusion_1} it suffices to show that $\dot{H}^{\alpha}$ is closed in $\bigoplus_{e\in\mathcal{E}}H^{\alpha}(e)$. Indeed, Indeed, $\dot{H}^{\alpha}$  is the energetic space associated with the operator $L^{\alpha}$, which is complete under the norm $\|u\|_E^2=\langle L^{\alpha}u,u\rangle$. However, by integration by parts we have the estimate $\|u\|_E\leq C\|u\|_{H^{\alpha}(\Gamma)}$. The closedness under $\|u\|_{H^{\alpha}(\Gamma)}$ follow immediately.\\
    
    \noindent\textbf{Case 2:} $\alpha\in(0,\frac{1}{2})$. By \citeA[Theorem~4.36]{lunardi} we have $\dot{H}^{\alpha}=[\mathbb{L}_2(\Gamma), \dot{H}^1]_{\alpha}$.
    Since $\dot{H}^1(\Gamma)=H^1(\Gamma)\cap D(\Gamma)$ by Case~1, and since $\bigoplus_{e\in\mathcal{E}}H^1_0\subset H^1(\Gamma)\cap D(\Gamma)$ we have
    \begin{equation*}
        \begin{split}
        [\mathbb{L}_2(\Gamma), \dot{H}^1]_{\alpha}
        = [\mathbb{L}_2(\Gamma), H^1(\Gamma)\cap D(\Gamma)]_{\alpha}&\supset\left[\bigoplus_{e\in\mathcal{E}}\mathbb{L}_2(e),\bigoplus_{e\in\mathcal{E}}H^1_0(e)\right]_{\alpha}\\
            &= \bigoplus_{e\in\mathcal{E}}H^{\alpha}(e),
        \end{split}
    \end{equation*}
    the last line follows from Lemma~\ref{lem:direct_sum_interpolation_lemma} and \citeA[Theorem~3.40]{mclean}. Therefore $\dot{H}^{\alpha}\cong \bigoplus_{e\in\mathcal{E}}H^{\alpha}(e)$ by Lemma~\ref{lem:H_dot_H_direct_1}. Finally, by Theorems~\ref{thm:W_chatacterization_3} and \ref{thm:H_W_equivalence} we have $H^{1}(\Gamma)\cong\bigoplus_{e\in\mathcal{E}}H^{\alpha}(e)$.\\
    
    \noindent\textbf{Case 3:} $\alpha\in(\frac{1}{2},1)$. We proceed as in Case~2, utilizing \citeA[Theorem~B.8]{mclean}
    \begin{equation*}
        \begin{split}
        \dot{H}^{\alpha}=[\mathbb{L}_2(\Gamma), \dot{H}^1]_{\alpha}
        &= \left[\bigoplus_{e\in\mathcal{E}}\mathbb{L}_2(e), \bigoplus_{e\in\mathcal{E}}H^1(e)\cap C(\Gamma)\cap D(\Gamma)\right]_{\alpha}\\&\supset\left[\bigoplus_{e\in\mathcal{E}}\mathbb{L}_2(e),\bigoplus_{e\in\mathcal{E}}H^1(e)\right]_{\alpha}\cap C(\Gamma)\cap D(\Gamma)\\
        &= \bigoplus_{e\in\mathcal{E}}H^{\alpha}(e)\cap C(\Gamma)\cap D(\Gamma)=H^{\alpha}(\Gamma)\cap D(\Gamma).
        \end{split}
    \end{equation*}
    Lemmas~\ref{lem:H_dot_H_Du} and~\ref{lem:H_dot_H_direct_1} imply $\dot{H}^{\alpha}\cong H^{\alpha}(\Gamma)\cap D(\Gamma)$.\\
    
    \noindent\textbf{Case 4:} $\alpha\in(1,\frac{3}{2})$. Let $u,v\in H^2(\Gamma)\cap K(\Gamma)\cap D(\Gamma)\cong \dot{H}^2$. Let $v=L^{1-\alpha/2}v_1$ for some $v_1\in\dot{H}^{4-\alpha}\subset \dot{H}^2\cong H^1(\Gamma)\cap D(\Gamma)$. We have
    \begin{equation*}
        \begin{split}
            \langle L^{\alpha/2}u,v\rangle=\langle u,L^{\alpha/2} v\rangle = \langle u, v_1\rangle = \kappa^2\langle u,v_1\rangle + \sum_{e\in\mathcal{E}}\langle Du,Dv_1\rangle_e.
        \end{split}
    \end{equation*}
    We estimate
    \begin{equation*}
        \begin{split}
            \sum_{e\in\mathcal{E}}\langle Du,Dv_1\rangle_e & \leq\sum_{e\in\mathcal{E}} \|Du\|_{H^{\alpha-1}(e)}\|Dv_1\|_{H^{1-\alpha}(e)}\\
            &\leq C\sum_{e\in\mathcal{E}}\|u\|_{H^{\alpha}(e)}\|v_1\|_{H^{2-\alpha}(e)}\\
            &\leq C\|u\|_{H^{\alpha}(\Gamma)}\|v_1\|_{\dot{H}^{2-\alpha}(\Gamma)}\leq C\|u\|_{H^{\alpha}(\Gamma)}\|v\|_{\mathbb{L}_2(\Gamma)},
        \end{split}
    \end{equation*}
    where we used the fact that $H_0^{\alpha-1}(e)=H^{\alpha-1}(e)$, $H^{2-\alpha}(\Gamma)\cong \dot{H}^{2-\alpha}$ by Case~2, and that $L^{1-\alpha/2}$ is continuous from $\dot{H}^{2-\alpha}$ onto $\mathbb{L}_2(\Gamma)$. We also have $\kappa^2\langle u,v\rangle\leq \kappa^2\|u\|_{\mathbb{L}_2(\Gamma)}\|v\|_{\mathbb{L}_2(\Gamma)}\leq C\|u\|_{\mathbb{L}_2(\Gamma)}\|v\|_{H^{\alpha}(\Gamma)}$. Therefore,
    \begin{equation}\label{eq:H_dot_H_alpha_norm_comparison}
        \begin{split}
            \|u\|_{\dot{H}^{\alpha}} = \|L^{\alpha/2}u\|_{\mathbb{L}_2(\Gamma)}
            =\sup_{\|v\|_{\mathbb{L}_2(\Gamma)}=1}\langle L^{\alpha/2}u,v \rangle\leq C\|u\|_{H^{\alpha}(\Gamma)}
        \end{split}
    \end{equation}
    for every $u\in H^{\alpha}(\Gamma)\cap K(\Gamma)\cap D(\Gamma)\cong \dot{H}^{\alpha}$. Now, Let $u\in H^{\alpha}(\Gamma)\cap D(\Gamma)$, using Lemma~\ref{lem:H_density_1} let $\{u_n\}_{n\in\mathbb{N}}$ be a sequence such that $u_n\in H^{2}(\Gamma)\cap K(\Gamma)\cap D(\Gamma)$ and $u_n\to u$ in $H^{\alpha}(\Gamma)$. By \eqref{eq:H_dot_H_alpha_norm_comparison} the sequence $\{u_n\}_{n\in\mathbb{N}}$ is Cauchy in $\dot{H}^{\alpha}$, hence by completeness of $\dot{H}^{\alpha}$ the sequence $\{u_n\}$ converges to a limit in $\dot{H}^{\alpha}$, this limit must be $u$ since both convergences in $H^{\alpha}(\Gamma)$ and $\dot{H}^{\alpha}$ imply $\mathbb{L}_2(\Gamma)$ convergence. Consequently, we have the estimate 
    $$\|u\|_{\dot{H}^{\alpha}}=\lim_{n\to\infty}\|u_n\|_{\dot{H}^{\alpha}}\leq C\lim_{n\to\infty}\|u_n\|_{H^{\alpha}(\Gamma)} = C\|u\|_{H^{\alpha}(\Gamma)}.$$
    Therefore $H^{\alpha}(\Gamma)\cap D(\Gamma)\hookrightarrow \dot{H}^{\alpha}$. Lemmas \ref{lem:H_dot_H_direct_1} and \ref{lem:H_dot_H_Du} imply that $\dot{H}^{\alpha}\hookrightarrow H^{\alpha}(\Gamma)\cap D(\Gamma)$, thus $\dot{H}^{\alpha}\cong H^{\alpha}(\Gamma)\cap D(\Gamma)$.\\
    
    \noindent\textbf{Case 5}: $\alpha\in(\frac{3}{2},2)$. Analogously to Case~1, we have
    $$H^2(\Gamma)\cap K(\Gamma)\cap D(\Gamma)\subset\dot{H}^{\alpha}\subset H^{\alpha}(\Gamma)\cap K(\Gamma)\cap D(\Gamma).$$
    Then, arguing analogously to Case 3 yields the desired identification.\\
    
    \noindent\textbf{Case 6:} $\alpha\geq2$. We proceed by induction on the sets of the form $$S_n=[n,n+2]\backslash\{n+\frac{1}{2},n+\frac{3}{2}\}$$ for $n\in\mathbb{N}_0$. The previous cases are the base cases $n=0$. Let $u\in H^{\alpha}(\Gamma)\cap K^{\floor{\frac{\alpha}{2}+\frac{1}{4}}}(\Gamma)\cap D^{\floor{\frac{\alpha}{2}+\frac{3}{4}}}(\Gamma)$ for $\alpha\in S_n$, and assume that $H^{\beta}(\Gamma)\cap K^{\floor{\frac{\beta}{2}+\frac{1}{4}}}(\Gamma)\cap D^{\floor{\frac{\beta}{2}+\frac{3}{4}}}(\Gamma)\cong \dot{H}^{\beta}$ for every $\beta\in\bigcup_{0\leq m<n}S_m$. Thus $u\in \dot{H}^{2}(\Gamma)$ and $Lu\in\dot{H}^{\alpha-2}$. By Claim~1 in the proof of Lemma~\ref{lem:H_alpha_H_dot_inclusion} we have $u\in\dot{H}^{\alpha}$. Thus $H^{\alpha}(\Gamma)\cap K^{\floor{\frac{\alpha}{2}+\frac{1}{4}}}(\Gamma)\cap D^{\floor{\frac{\alpha}{2}+\frac{3}{4}}}(\Gamma)\subset \dot{H}^{\alpha}$. Lemmas~\ref{lem:H_dot_H_Du} and~\ref{lem:H_dot_H_direct_1} imply $H^{\alpha}(\Gamma)\cap K^{\floor{\frac{\alpha}{2}+\frac{1}{4}}}(\Gamma)\cap D^{\floor{\frac{\alpha}{2}+\frac{3}{4}}}(\Gamma)\cong \dot{H}^{\alpha}$, this completes the induction step.
\end{proof}
For $\beta>0$, define the Sobolev associated to the vertex conditions $K(\Gamma)$ and $D(\Gamma)$ 
$$H_{K,D}^{\beta}(\Gamma):= H^{\beta}(\Gamma)\cap K^{\floor{\frac{\beta}{2}+\frac{1}{4}}}(\Gamma)\cap D^{\floor{\frac{\beta}{2}+\frac{3}{4}}}(\Gamma),$$
for $\beta<0$, define the negative order Sobolev space associated to the vertex conditions $K(\Gamma)$ and $D(\Gamma)$ as the dual space $$H_{K,D}^{\beta}(\Gamma):=\left(H_{K,D}^{-\beta}(\Gamma)\right)^*.$$
If $\mathcal{V}_D=\varnothing$, we denote $H_{K}^{\beta}(\Gamma):=H_{K,D}^{\beta}(\Gamma)$, and if $\mathcal{V}_K:=\varnothing$, denote $H_{D}^{\beta}(\Gamma)=H_{K,D}^{\beta}(\Gamma)$.

Recall that the operator $L^{\alpha/2}$ maps $\dot{H}^{\beta}$ onto $\dot{H}^{\alpha-\beta}$ isometrically, combining this with the identification established in Theorem~\ref{thm:main_1}, we obtain the following corollary.
\begin{corr}\label{corr:existence_L_1}
    Let $\alpha>0$ and $\beta\in\mathbb{R}\backslash\{n+\frac{1}{2}:n\in\mathbb{Z}\}$, and suppose that $2(\alpha+\beta)\not\in\mathbb{Z}$. Let 
    $$f\in H^{\beta}_{K,D}(\Gamma)\cong \dot{H}^{\beta}.$$
    The fractional PDE 
    $$L^{\alpha/2}u=f$$
    admits a unique solution 
    $$u\in H^{\beta+\alpha}_{K,D}(\Gamma)\cong \dot{H}^{\alpha+\beta}.$$
    Moreover, there exists a constant $C$ which depends on $\alpha$ and $\beta$ only that satisfies
    $$\|u\|_{H^{\alpha+\beta}_{K,D}(\Gamma)}\leq C \|f\|_{H^{\beta}_{K,D}(\Gamma)}.$$
\end{corr}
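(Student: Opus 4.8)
The plan is to reduce the entire statement to two inputs already established: the isometric action of $L^{\alpha/2}$ on the scale $\{\dot{H}^{s}\}_{s\in\mathbb{R}}$, and the identification $\dot{H}^{\gamma}\cong H^{\gamma}_{K,D}(\Gamma)$ supplied by Theorem~\ref{thm:main_1}. The role of the two hypotheses is precisely to make these identifications available at the orders $\beta$ and $\alpha+\beta$, so once they are in hand the corollary is almost immediate.

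First I would record the identifications. The condition $\beta\in\mathbb{R}\backslash\{n+\tfrac12:n\in\mathbb{Z}\}$ guarantees $H^{\beta}_{K,D}(\Gamma)\cong\dot{H}^{\beta}$ by a three-case argument: for $\beta>0$ one has $\beta\in\mathbb{A}$, so this is exactly Theorem~\ref{thm:main_1}; for $\beta=0$ it is the definition $H^{0}_{K,D}(\Gamma)=\mathbb{L}_2(\Gamma)=\dot{H}^{0}$; and for $\beta<0$ it follows by dualizing, since $H^{\beta}_{K,D}(\Gamma)=\big(H^{-\beta}_{K,D}(\Gamma)\big)^*$ and $\dot{H}^{\beta}=\big(\dot{H}^{-\beta}\big)^*$ with $-\beta\in\mathbb{A}$. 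The hypothesis $2(\alpha+\beta)\notin\mathbb{Z}$ forces, in particular, $\alpha+\beta\notin\{n+\tfrac12:n\in\mathbb{Z}\}$, so the same three-case argument yields $H^{\alpha+\beta}_{K,D}(\Gamma)\cong\dot{H}^{\alpha+\beta}$. Both isomorphisms are norm-equivalences with constants depending only on $\alpha,\beta$ and $\Gamma$, so it suffices to prove existence, uniqueness, and the bound with $\dot{H}^{\beta}$ and $\dot{H}^{\alpha+\beta}$ in place of $H^{\beta}_{K,D}(\Gamma)$ and $H^{\alpha+\beta}_{K,D}(\Gamma)$.

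Next, existence and uniqueness. As recalled just before the corollary, $L^{\alpha/2}$ extends to an isometric isomorphism $L^{\alpha/2}:\dot{H}^{s}\to\dot{H}^{s-\alpha}$ for every $s\in\mathbb{R}$, with isometric inverse $L^{-\alpha/2}$; taking $s=\alpha+\beta$ gives an isometric isomorphism $L^{\alpha/2}:\dot{H}^{\alpha+\beta}\to\dot{H}^{\beta}$. Hence for $f\in\dot{H}^{\beta}$ I set $u:=L^{-\alpha/2}f\in\dot{H}^{\alpha+\beta}$; by construction $L^{\alpha/2}u=f$, which gives existence, and injectivity of the isometry $L^{\alpha/2}$ forces uniqueness of $u$ in $\dot{H}^{\alpha+\beta}$. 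The a priori bound then follows by chaining the norm relations, $\|u\|_{H^{\alpha+\beta}_{K,D}(\Gamma)}\le C\|u\|_{\alpha+\beta}=C\|L^{-\alpha/2}f\|_{\alpha+\beta}=C\|f\|_{\beta}\le C'\|f\|_{H^{\beta}_{K,D}(\Gamma)}$, where the central equality is the isometry of $L^{-\alpha/2}$ from $\dot{H}^{\beta}$ onto $\dot{H}^{\alpha+\beta}$ and the two inequalities are the equivalence constants from the first step.

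I do not expect a genuine obstacle here, since all the analytic content lives in Theorem~\ref{thm:main_1} and in the spectral definition of $L^{\alpha/2}$. The one point requiring care is the negative-order regime (when $\beta<0$, or when $\alpha+\beta<0$): there the equation $L^{\alpha/2}u=f$ must be read in the dual pairing, and one must check that the identifications $H^{\gamma}_{K,D}(\Gamma)\cong\dot{H}^{\gamma}$ for $\gamma<0$ are compatible with the pairing under which $L^{\alpha/2}$ is defined on the negative part of the scale. This is routine once one observes that $L^{\alpha/2}$ is given on all of $\{\dot{H}^{s}\}_{s\in\mathbb{R}}$ by the common spectral series $L^{\alpha/2}u=\sum_i\lambda_i^{\alpha/2}\langle u,\varphi_i\rangle\varphi_i$ and that passing to duals simply relabels the weight $\lambda_i^{s}$ by $\lambda_i^{-s}$; I would flag this compatibility as the single verification to carry out explicitly.
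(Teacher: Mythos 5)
Your proposal is correct and takes essentially the same route as the paper, which derives this corollary in a single line by combining the isometric extension $L^{\alpha/2}:\dot{H}^{s}\to\dot{H}^{s-\alpha}$ (with continuous inverse $L^{-\alpha/2}$) with the identification $\dot{H}^{\gamma}\cong H^{\gamma}_{K,D}(\Gamma)$ of Theorem~\ref{thm:main_1} --- precisely your reduction via $u=L^{-\alpha/2}f$. Your explicit three-case treatment of the orders $\beta$ and $\alpha+\beta$ (positive via $\gamma\in\mathbb{A}$, zero via $\mathbb{L}_2(\Gamma)$, negative via duality of the $K,D$-spaces) merely fills in details the paper leaves implicit, and your flagged verification of the dual-pairing compatibility is the right point of care.
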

Another application of Theorem~\ref{thm:main_1} is the well-posedness of the SPDE $L^{\alpha/2}u=\mathcal{W}$ where $\mathcal{W}$ is a Gaussian white noise. We state the result as the following corollary.
\begin{corr}\label{corr:existence_L_1_W}
    Let $\alpha,\epsilon>0$, and $\mathcal{W}$ be a Gaussian white noise defined on $\Gamma$. The SPDE
    $$L^{\alpha/2}u=\mathcal{W}$$
    has almost surely a unique solution
    $u\in H^{\alpha-\frac{1}{2}-\epsilon}_{K,D}(\Gamma)$.
\end{corr}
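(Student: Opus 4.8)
The plan is to realize the solution explicitly through the spectral expansion of the white noise, measure its regularity on the scale $\dot H^\beta$, and then transfer the conclusion to $H^{\alpha-1/2-\epsilon}_{K,D}(\Gamma)$ via Theorem~\ref{thm:main_1}. Since $\{\varphi_i\}_{i\in\mathbb{N}}$ is an orthonormal basis of $\mathbb{L}_2(\Gamma)$, I would first expand the white noise as $\mathcal{W}=\sum_{i=1}^\infty \xi_i\varphi_i$, where $\xi_i=\mathcal{W}(\varphi_i)$ is a sequence of independent standard Gaussian random variables. Because $L^{\alpha/2}$ extends to an isometric isomorphism $\dot H^\gamma\to\dot H^{\gamma-\alpha}$ for every $\gamma\in\mathbb{R}$, the natural candidate solution is $u=L^{-\alpha/2}\mathcal{W}=\sum_{i=1}^\infty \lambda_i^{-\alpha/2}\xi_i\varphi_i$, and I would define $u$ by this series and verify its convergence in the appropriate space.

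The core of the argument is a single computation of the expected squared norm. Writing $\beta=\alpha-\tfrac12-\epsilon$ and using $\|u\|_\beta^2=\sum_i\lambda_i^\beta\langle u,\varphi_i\rangle^2$ together with $\langle u,\varphi_i\rangle=\lambda_i^{-\alpha/2}\xi_i$ and $\mathbb{E}[\xi_i^2]=1$, the monotone convergence theorem gives
\begin{equation*}
  \mathbb{E}\big[\|u\|_\beta^2\big]=\sum_{i=1}^\infty \lambda_i^{\beta-\alpha}\,\mathbb{E}[\xi_i^2]=\sum_{i=1}^\infty \lambda_i^{-\frac12-\epsilon}.
\end{equation*}
By Weyl's law $\lambda_i\asymp i^2$, so the right-hand side is comparable to $\sum_i i^{-1-2\epsilon}$, which converges precisely because $\epsilon>0$. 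Hence $\mathbb{E}[\|u\|_\beta^2]<\infty$, so the nonnegative random variable $\|u\|_\beta$ is finite almost surely and the series defining $u$ converges in $\dot H^\beta$ on a set of full measure. The same computation shows $\mathcal{W}\in\dot H^{-1/2-\epsilon}$ almost surely.

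To finish, I would invoke Theorem~\ref{thm:main_1} to identify $\dot H^{\alpha-1/2-\epsilon}\cong H^{\alpha-1/2-\epsilon}_{K,D}(\Gamma)$; for the countably many exponents excluded from $\mathbb{A}$ (and when $\alpha-\tfrac12-\epsilon\le 0$, where one passes to the dual space) it suffices to replace $\epsilon$ by a slightly larger admissible value, which only weakens the conclusion and is harmless since $\epsilon>0$ is arbitrary. Uniqueness is then immediate: since $L^{\alpha/2}:\dot H^{\alpha-1/2-\epsilon}\to\dot H^{-1/2-\epsilon}$ is a bijection and $\mathcal{W}\in\dot H^{-1/2-\epsilon}$ almost surely, the element $u=L^{-\alpha/2}\mathcal{W}$ is the unique solution of $L^{\alpha/2}u=\mathcal{W}$ in $\dot H^{\alpha-1/2-\epsilon}\cong H^{\alpha-1/2-\epsilon}_{K,D}(\Gamma)$.

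The main obstacle, though largely a matter of careful bookkeeping, is the borderline nature of the estimate: the summability $\sum_i\lambda_i^{-1/2-\epsilon}<\infty$ fails at $\epsilon=0$ since $\sum_i i^{-1}$ diverges, so the sharp exponent $\alpha-\tfrac12$ itself cannot be attained. Consequently the regularity loss $-\epsilon$ is genuinely necessary, and one must phrase the convergence of the random series and the membership $u\in H^{\alpha-1/2-\epsilon}_{K,D}(\Gamma)$ in the correct full-measure sense rather than as a deterministic bound.
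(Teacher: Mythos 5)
Your proposal is correct and follows essentially the same route as the paper: the paper's proof is a one-liner that applies Corollary~\ref{corr:existence_L_1} together with Theorem~\ref{thm:main_1} after noting that $\mathcal{W}\in\dot{H}^{-\frac{1}{2}-\epsilon}$ almost surely (a fact it cites from the literature), and your Karhunen--Lo\`eve expansion with the second-moment computation $\mathbb{E}\big[\|u\|_{\beta}^2\big]=\sum_i\lambda_i^{-\frac{1}{2}-\epsilon}<\infty$ via Weyl's law is exactly a self-contained proof of that cited noise-regularity fact, so the two arguments coincide in substance. One small directional slip: at the exceptional exponents you should replace $\epsilon$ by a slightly \emph{smaller} admissible value (i.e., a larger exponent $\alpha-\tfrac{1}{2}-\epsilon'$) and then embed downward into the claimed space, since enlarging $\epsilon$ only yields membership in a coarser space than the one asserted.
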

\begin{proof}
    Follows from Corollary~\ref{corr:existence_L_1} and Theorem~\ref{thm:main_1} by noting that $\mathcal{W}\in\dot{H}^{-\frac{1}{2}-\epsilon}$ (see \citeA{bern_gaussian_matern}).
\end{proof}

\section{Application to more general operators}
\label{sec:appli_2}
Consider the operator $\mathcal{L}= \kappa^2 - \nabla(a\nabla)$ where the coefficients $a,\kappa\in \mathbb{L}_{\infty}(\Gamma)$ satisfy $$\inf_{x\in\Gamma}\kappa(x)>0,\quad\inf_{x\in\Gamma}a(x)>0.$$ 
The operator $\mathcal{L}$ is defined from $H^1_{D}(\Gamma)=H^1_{K,D}(\Gamma)$ into the dual space $H^{-1}_{D}(\Gamma)=H^{-1}_{K,D}(\Gamma)$ via the bilinear form
$$\langle \mathcal{L} u,v\rangle = \langle \kappa^2 u,v\rangle + \langle a\nabla u,\nabla v\rangle\quad \forall u,v\in H^1_D(\Gamma).$$
Note that the quantity $\langle a\nabla u,\nabla v\rangle = \langle aD_{\eta} u,D_{\eta} v\rangle$ is independent of parameterizations $\eta$. Let $L=1-\Delta_{\Gamma}$, and let $C>0$ such that $a,\kappa^2,\frac{1}{a},\frac{1}{\kappa^2} <C$. For every $u\in H^1_D(\Gamma)$, we have the inequality
\begin{equation}\label{eq:L_L_inequality}
\begin{split}
    \langle  \mathcal{L}u,u\rangle = \|\kappa u\|^2_{\mathbb{L}_2(\Gamma)} + \|\sqrt{a}\nabla u\|^2_{\mathbb{L}_2(\Gamma)} &\geq \frac{1}{C}\|u\|^2_{\mathbb{L}_2(\Gamma)} + \frac{1}{C}\|\nabla u\|^2_{\mathbb{L}_2(\Gamma)}\\
    &= \frac{1}{C}\langle Lu,u\rangle.
\end{split}
\end{equation}
and by the Cauchy-Schwarz inequality
\begin{equation}\label{eq:L_L_inequality_2}
    \langle  \mathcal{L}u,v\rangle \leq C\|u\|_{H^1(\Gamma)}\|v\|_{H^1(\Gamma)}.
\end{equation}
For every $u,v\in H^{1}_D(\Gamma)$. Therefore $(u,v)\mapsto \langle \mathcal{L}u,v\rangle$ defines a continuous coersive bilinear form on $H^1_D(\Gamma)$. Direct application of the Lax-Milgram theorem yields the following theorem.
\begin{theorem}\label{thm:existence_calL_1}
    Let $f\in H^{-1}(\Gamma)$. The PDE
    $$\mathcal{L}u=f$$
    has a unique solution $u\in H^{1}_D(\Gamma)$. Moreover, there exists a constant $C$, independent of $f$ and $u$, that satisfies
    $$\|u\|_{H^1(\Gamma)}\leq C\|f\|_{H^{-1}_D(\Gamma)}.$$
\end{theorem}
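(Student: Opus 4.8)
The plan is to recognize this as a direct application of the Lax--Milgram theorem, for which the preceding discussion has already assembled every ingredient. Writing $B(u,v) = \langle \mathcal{L}u, v\rangle = \langle \kappa^2 u, v\rangle + \langle a\nabla u, \nabla v\rangle$, the estimate \eqref{eq:L_L_inequality} shows that $B$ is coercive, namely $B(u,u) \geq \tfrac{1}{C}\langle Lu, u\rangle = \tfrac{1}{C}\|u\|_{H^1(\Gamma)}^2$, while \eqref{eq:L_L_inequality_2} shows that $B$ is bounded, $B(u,v) \leq C\|u\|_{H^1(\Gamma)}\|v\|_{H^1(\Gamma)}$. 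Thus $B$ is a continuous coercive bilinear form on $H^1_D(\Gamma)$, and the problem reduces to confirming that $H^1_D(\Gamma)$ is a Hilbert space and that $f$ defines a bounded linear functional on it.

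First I would verify that $H^1_D(\Gamma)$ is a Hilbert space under the $H^1(\Gamma)$ inner product. By \eqref{eq:H_1_ident_1} the space $H^1(\Gamma)$ is the closed subspace $\bigoplus_{e\in\mathcal{E}} H^1(e) \cap C(\Gamma)$, and $H^1_D(\Gamma) = H^1(\Gamma) \cap D(\Gamma)$ is cut out by the additional Dirichlet constraints $u(v)=0$ for $v\in\mathcal{V}_D$. Since the embedding $H^1(\Gamma) \hookrightarrow C(\Gamma)$ furnished by Theorem~\ref{thm:embedding_1}(iii) renders each point evaluation $u \mapsto u(v)$ a continuous functional, these constraints are closed, so $H^1_D(\Gamma)$ is a closed subspace of the Hilbert space $\bigoplus_{e\in\mathcal{E}}H^1(e)$ and hence itself a Hilbert space.

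Then I would invoke Lax--Milgram. Since $f \in H^{-1}(\Gamma)$ restricts to a bounded linear functional on $H^1_D(\Gamma)$, the theorem furnishes a unique $u \in H^1_D(\Gamma)$ with $B(u,v) = \langle f, v\rangle$ for all $v \in H^1_D(\Gamma)$, which is exactly the weak formulation $\mathcal{L}u = f$. The stability bound follows by testing against $u$ itself: coercivity gives $\tfrac{1}{C}\|u\|_{H^1(\Gamma)}^2 \leq B(u,u) = \langle f, u\rangle \leq \|f\|_{H^{-1}_D(\Gamma)}\|u\|_{H^1(\Gamma)}$, and dividing by $\|u\|_{H^1(\Gamma)}$ yields $\|u\|_{H^1(\Gamma)} \leq C\|f\|_{H^{-1}_D(\Gamma)}$.

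I expect no genuine obstacle here: the analytic substance, namely the coercivity and continuity of the form together with the realization of $H^1_D(\Gamma)$ as a Hilbert space on which point evaluation is continuous, was already supplied by the inequalities \eqref{eq:L_L_inequality}--\eqref{eq:L_L_inequality_2} and by the embedding theorems of Section~\ref{sec:sobolev_definition}. The only point demanding a modicum of care is the bookkeeping of the identification $H^1_D(\Gamma) = H^1_{K,D}(\Gamma)$ (here $\floor{\tfrac12+\tfrac14}=0$ and $\floor{\tfrac12+\tfrac34}=1$, so the $K$-condition is vacuous and only the Dirichlet condition survives) and ensuring that $f$ is paired against the correct space so that $\|f\|_{H^{-1}_D(\Gamma)}$ is indeed the relevant dual norm in the final estimate.
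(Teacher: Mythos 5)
Your proposal is correct and follows exactly the paper's route: the paper's proof consists precisely of observing that \eqref{eq:L_L_inequality} and \eqref{eq:L_L_inequality_2} make $(u,v)\mapsto\langle\mathcal{L}u,v\rangle$ a continuous coercive bilinear form on $H^1_D(\Gamma)$ and then invoking the Lax--Milgram theorem. Your additional details (closedness of $H^1_D(\Gamma)$ via continuity of point evaluations, and the stability bound obtained by testing with $u$) are standard steps the paper leaves implicit, so there is nothing substantively different to compare.
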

To establish higher regularity results we assume the function $a$ is Lipschitz, then $\mathcal{L}$ can be defined on $H^2(\Gamma)$, and by \citeA[Theorem~2.7]{mathcomp_paper} the restriction $$\mathcal{L}:H^2(\Gamma)\cap K(\Gamma)\cap D(\Gamma)\to\mathbb{L}_2(\Gamma)$$ is a self-adjoint and positive operator with a compact inverse. Consequently, we may define the fractional powers $\mathcal{L}^{\alpha/2}$ and spaces $\dot{H}^{\alpha}_{\mathcal{L}} = \mathcal{D}(\mathcal{L}^{\alpha/2})$ for $\alpha\in\mathbb{R}$, analogously to the construction of $L^{\alpha/2}$ and $\dot{H}^{\alpha}$ for the operator $L$.

\begin{remark}
    When $\mathcal{V}_D\not=\varnothing$ the condition $\inf_{x\in\Gamma}\kappa(x)>0$ can be dropped. This is because, in this case, the following Poincar\'e inequality holds
    \begin{equation*}
        \|u\|_{\mathbb{L}_2(\Gamma)}\leq C\|Du\|_{\mathbb{L}_2(\Gamma)}.
    \end{equation*}
    We formalize this result as the following lemma.
\end{remark}
\begin{lemma}[Poincar\'e Inequality]
    Assume that $\mathcal{V}_D\not=\varnothing$, and let $u\in H^1_D(\Gamma)$. We have
    \begin{equation*}
        \|u\|_{\mathbb{L}_2(\Gamma)}\leq C\|Du\|_{\mathbb{L}_2(\Gamma)}.
    \end{equation*}
\end{lemma}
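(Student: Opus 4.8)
The plan is to prove a pointwise bound $|u(x)| \le C_0 \|Du\|_{\mathbb{L}_2(\Gamma)}$ uniform in $x$, and then integrate over $\Gamma$. Throughout I assume $\Gamma$ is connected; otherwise the statement really requires a Dirichlet vertex in each connected component, since a nonzero constant supported on a Dirichlet-free component lies in $H^1_D(\Gamma)$ with vanishing derivative and would violate the inequality. By the identification $H^1_D(\Gamma)\cong\bigoplus_{e\in\mathcal{E}}H^1(e)\cap C(\Gamma)$ together with the constraint $u(v)=0$ for $v\in\mathcal{V}_D$, every $u\in H^1_D(\Gamma)$ is continuous on $\Gamma$, restricts to an $H^1$ function on each edge, and vanishes at the Dirichlet vertices. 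Fix one such vertex $v_0\in\mathcal{V}_D$, which exists by hypothesis.

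First I would fix an arbitrary $x\in\Gamma$ and choose a shortest path $p_x\colon[0,\ell_x]\to\Gamma$ with $p_x(0)=v_0$, $p_x(\ell_x)=x$, and $\ell_x=d(v_0,x)$; such a geodesic exists by compactness and connectedness of $\Gamma$. Because $u$ is continuous on $\Gamma$ and belongs to $H^1(e)$ on each edge, the composition $u\circ p_x$ is continuous and piecewise $H^1$ on $[0,\ell_x]$, with breakpoints confined to the finite set $p_x^{-1}(\mathcal{V})$; the continuity of $u$ across vertices glues these pieces into an absolutely continuous function. The fundamental theorem of calculus together with $u(v_0)=0$ then gives $u(x)=\int_0^{\ell_x}(u\circ p_x)'(t)\,dt$, and Cauchy--Schwarz yields
$$|u(x)|^2 \le \ell_x\int_0^{\ell_x}|(u\circ p_x)'(t)|^2\,dt.$$

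The second step is to bound this path integral by the global Dirichlet energy. Since $p_x$ is a local isometry off the finite set $p_x^{-1}(\mathcal{V})$, one has $|(u\circ p_x)'(t)|=|Du(p_x(t))|$ for almost every $t$ — the sign ambiguity in the $\eta$-dependence of $D$ being irrelevant under the absolute value — so the integrand equals $|Du|^2$ pulled back along the path. As a shortest path, $p_x$ is injective and parametrized by arclength, so its image is an embedded arc of length $\ell_x\le\operatorname{diam}(\Gamma)$; passing to arclength measure on $\Gamma$ therefore gives $\int_0^{\ell_x}|Du(p_x(t))|^2\,dt\le\|Du\|_{\mathbb{L}_2(\Gamma)}^2$. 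Combining this with the previous display and $\ell_x\le\operatorname{diam}(\Gamma)=:L$ produces the uniform estimate $|u(x)|^2\le L\,\|Du\|_{\mathbb{L}_2(\Gamma)}^2$. Integrating over $\Gamma$ and writing $|\Gamma|=\sum_{e\in\mathcal{E}}l_e$ for the total length yields $\|u\|_{\mathbb{L}_2(\Gamma)}^2\le|\Gamma|\,L\,\|Du\|_{\mathbb{L}_2(\Gamma)}^2$, which is the claim with $C=\sqrt{|\Gamma|\,L}$.

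The main obstacle — indeed essentially the only nonroutine point — is justifying the identity $|(u\circ p_x)'|=|Du\circ p_x|$ together with the domination $\int_{p_x}|Du|^2\le\|Du\|_{\mathbb{L}_2(\Gamma)}^2$. This rests on the fact that a geodesic is injective and arclength-parametrized, so that its image carries one-dimensional length measure dominated by that of $\Gamma$, and on the absolute continuity of $u\circ p_x$, which genuinely uses the continuity of $u$ across vertices built into $H^1_D(\Gamma)$ via the $C(\Gamma)$ constraint. Everything else is the one-dimensional fundamental theorem of calculus and Cauchy--Schwarz.
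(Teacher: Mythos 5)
Your proof is correct and follows essentially the same route as the paper's: both bound $|u(x)|$ pointwise by integrating the derivative along a shortest path from a Dirichlet vertex (using the continuity of $u\in H^1_D(\Gamma)$ across vertices and $u(v_0)=0$) and then integrate over $\Gamma$. The only cosmetic difference is that the paper telescopes vertex-to-vertex and bounds each edge increment by $\|Du\|_{\mathbb{L}_1(\Gamma)}$ (converting to $\mathbb{L}_2$ at the end, with a constant involving $\#\mathcal{V}$), whereas you apply Cauchy--Schwarz along the geodesic directly, which requires the (true, and correctly justified) injectivity of shortest paths; your explicit remark that connectedness is needed is a fair point the paper leaves implicit.
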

\begin{proof}
    Let $x_0\in\mathcal{V}_D$, $e\in\mathcal{E}$, $(y,z)\in \partial e\times e$, and $\zeta$ a parameterization of $e$ such that $\zeta(0)=y$. We have
    \begin{equation*}
        |u(z) - u(y)| = \left|\int_0^{\zeta^{-1}(z)} D(u\circ\zeta)(t)dt\right| \leq \int_0^{l_e}|D(u\circ\zeta)(t)|dt \leq \|Du\|_{\mathbb{L}_1(\Gamma)}.
    \end{equation*}
    Now let $y=y_0,y_1,\cdots,y_k=x_0$ be a sequence of vertices in $\Gamma$ that constitutes a shortest path from $y$ to $x_0$. By the previous inequality, we have
    \begin{equation*}
    \begin{split}
        |u(z)| = |u(z) - u(x_0)|&\leq |u(z)-u(y)| + \sum_{i=1}^{k}|u(y_i) - u(y_{i-1})|\\
        &\leq k\|Du\|_{\mathbb{L}_1(\Gamma)}\leq \#\mathcal{V}\|Du\|_{\mathbb{L}_1(\Gamma)}
    \end{split}
    \end{equation*}
    Finally,
    \begin{equation*}
        \|u\|_{\mathbb{L}_2(\Gamma)}^2 = \int_{\Gamma}|u(z)|^2dz\leq \#\mathcal{V} \|Du\|_{\mathbb{L}_1(\Gamma)}^2\int_{\Gamma}dz\leq C\|Du\|_{\mathbb{L}_2(\Gamma)}^2.
    \end{equation*}
\end{proof}

\begin{theorem}\label{thm:main_iden_2}
    Assume that $a$ is Lipschitz and $\alpha\in[0,2]$. Then $\dot{H}^{\alpha}_{\mathcal{L}}\cong \dot{H}^{\alpha}$.
\end{theorem}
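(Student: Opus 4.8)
The plan is to reduce the statement to the two endpoints $\alpha = 0$ and $\alpha = 2$ and then recover the intermediate range by interpolation, exactly as in the treatment of $L$ in Theorem~\ref{thm:main_1}. The case $\alpha = 0$ is immediate, since $\dot{H}^0 = \dot{H}^0_{\mathcal{L}} = \mathbb{L}_2(\Gamma)$ by definition. The heart of the argument is therefore the endpoint $\alpha = 2$, where I claim $\dot{H}^2 \cong \dot{H}^2_{\mathcal{L}}$.

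To prove the endpoint equivalence, I would first observe that the two domains coincide as \emph{sets}: by definition $\dot{H}^2 = \mathcal{D}(L) = H^2(\Gamma)\cap K(\Gamma)\cap D(\Gamma)$, and the hypothesis that $a$ is Lipschitz is precisely what allows \citeA[Theorem~2.7]{mathcomp_paper} to identify $\dot{H}^2_{\mathcal{L}} = \mathcal{D}(\mathcal{L}) = H^2(\Gamma)\cap K(\Gamma)\cap D(\Gamma)$ as well, with $\mathcal{L}$ self-adjoint, positive, and boundedly invertible onto $\mathbb{L}_2(\Gamma)$. It then remains to match the norms $\|u\|_{\dot{H}^2} = \|Lu\|_{\mathbb{L}_2(\Gamma)}$ and $\|u\|_{\dot{H}^2_{\mathcal{L}}} = \|\mathcal{L}u\|_{\mathbb{L}_2(\Gamma)}$; I would show both are equivalent to $\|\cdot\|_{H^2(\Gamma)}$ on the common domain. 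For the upper bounds, I expand edgewise $\mathcal{L}u = \kappa^2 u - a\Delta u - (\nabla a)(\nabla u)$, and use $\inf a > 0$, $\|a\|_{\mathbb{L}_\infty}<\infty$ together with $\nabla a \in \mathbb{L}_\infty(\Gamma)$ (Lipschitz) to get $\|\mathcal{L}u\|_{\mathbb{L}_2(\Gamma)} \le C\|u\|_{H^2(\Gamma)}$, and similarly $\|Lu\|_{\mathbb{L}_2(\Gamma)}\le C\|u\|_{H^2(\Gamma)}$. For the lower bounds, the elliptic a priori estimates $\|u\|_{H^2(\Gamma)} \le C\|\mathcal{L}u\|_{\mathbb{L}_2(\Gamma)}$ and $\|u\|_{H^2(\Gamma)} \le C\|Lu\|_{\mathbb{L}_2(\Gamma)}$ are exactly the boundedness of the inverses $\mathcal{L}^{-1}, L^{-1}:\mathbb{L}_2(\Gamma)\to H^2(\Gamma)\cap K(\Gamma)\cap D(\Gamma)$ furnished by \citeA[Theorem~2.7]{mathcomp_paper}. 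Chaining these gives $\|\cdot\|_{\dot{H}^2}\asymp\|\cdot\|_{H^2(\Gamma)}\asymp\|\cdot\|_{\dot{H}^2_{\mathcal{L}}}$, hence $\dot{H}^2\cong\dot{H}^2_{\mathcal{L}}$.

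For $\alpha\in(0,2)$ I would invoke interpolation. Since $L$ and $\mathcal{L}$ are both positive, self-adjoint, and boundedly invertible, \citeA[Theorem~4.36]{lunardi} identifies their fractional-power domains with the interpolation spaces
$$\dot{H}^{\alpha} = \left[\mathbb{L}_2(\Gamma),\dot{H}^2\right]_{\alpha/2}, \qquad \dot{H}^{\alpha}_{\mathcal{L}} = \left[\mathbb{L}_2(\Gamma),\dot{H}^2_{\mathcal{L}}\right]_{\alpha/2}.$$
The identity map is bounded $\mathbb{L}_2(\Gamma)\to\mathbb{L}_2(\Gamma)$ and, by the endpoint step just proved, bounded $\dot{H}^2\to\dot{H}^2_{\mathcal{L}}$ and $\dot{H}^2_{\mathcal{L}}\to\dot{H}^2$. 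The functoriality of interpolation \citeA[Theorem~B.2]{mclean} then propagates these bounds to the interpolated spaces in both directions, yielding $\dot{H}^{\alpha}\cong\dot{H}^{\alpha}_{\mathcal{L}}$ for every $\alpha\in(0,2)$; combined with the cases $\alpha\in\{0,2\}$ this establishes the theorem.

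I expect the only genuine obstacle to be the two-sided norm equivalence at $\alpha = 2$, and within it the lower (elliptic regularity) bound, since this is where the Lipschitz regularity of $a$ is essential and where we must rely on the external result \citeA[Theorem~2.7]{mathcomp_paper}; once that estimate is secured, the reduction to the endpoints and the interpolation functoriality are routine and mirror the arguments already used for $L$.
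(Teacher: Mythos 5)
Your proposal is correct, but it is organized differently from the paper's proof, and the comparison is instructive. The paper does not treat $\alpha=2$ first and interpolate once over $(0,2)$; instead it proceeds in two stages: it first establishes $\dot{H}^1_{\mathcal{L}}\cong\dot{H}^1$ directly from the form estimates \eqref{eq:L_L_inequality} and \eqref{eq:L_L_inequality_2} (which need only $a,\kappa^2$ bounded above and below, not Lipschitz), interpolates via \citeA[Theorem~4.36]{lunardi} to cover $(0,1)$, then proves the endpoint $\dot{H}^2_{\mathcal{L}}\cong\dot{H}^2$ and interpolates again for $(1,2)$. More importantly, at $\alpha=2$ the paper does \emph{not} simply read off $\mathcal{D}(\mathcal{L})=H^2(\Gamma)\cap K(\Gamma)\cap D(\Gamma)$ from \citeA[Theorem~2.7]{mathcomp_paper} as you do: it re-derives the bijectivity of $\mathcal{L}:H^2(\Gamma)\cap K(\Gamma)\cap D(\Gamma)\to\mathbb{L}_2(\Gamma)$ constructively, by producing a weak solution in $H^1_D(\Gamma)$ via Lax--Milgram (Theorem~\ref{thm:existence_calL_1}), lifting it to $\bigoplus_{e\in\mathcal{E}}H^2(e)$ edgewise with \citeA[Theorem~9.15]{gilbarg_trudinger} after subtracting an interpolating edge-wise polynomial $P$ so that $u-P\in\bigoplus_{e\in\mathcal{E}}H^1_0(e)$, and verifying the Kirchhoff condition by testing against a sequence $w_n$ concentrating at each Kirchhoff vertex; only then does it invoke the open mapping theorem for the norm equivalence, exactly as you do. Your one-shot interpolation $[\mathbb{L}_2(\Gamma),\dot{H}^2]_{\alpha/2}$ with functoriality \citeA[Theorem~B.2]{mclean} is legitimate and slightly leaner, since it renders the $\alpha=1$ form-comparison step redundant; what the paper's route buys is (i) an $\alpha\le 1$ statement that visibly requires no Lipschitz hypothesis on $a$, and (ii) independence from the precise domain identification in the external reference, where your phrase that the $H^2$ a priori bound is ``exactly furnished'' by \citeA[Theorem~2.7]{mathcomp_paper} slightly over-attributes: that theorem gives self-adjointness, positivity, and a compact inverse on $\mathbb{L}_2(\Gamma)$, and the boundedness of $\mathcal{L}^{-1}$ into $H^2(\Gamma)$ is most safely obtained, as both you and the paper ultimately do, from the open mapping theorem once bijectivity and the upper bound $\|\mathcal{L}u\|_{\mathbb{L}_2(\Gamma)}\leq C\|u\|_{H^2(\Gamma)}$ are in hand. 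This is a matter of citation hygiene, not a gap: your argument closes correctly.
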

\begin{proof}
    By \eqref{eq:L_L_inequality} and~\eqref{eq:L_L_inequality_2} it follows that $\dot{H}^1_{\mathcal{L}}\cong \dot{H}^1$. Consequently by \citeA[Theorem~4.36]{lunardi} we have $\dot{H}^{\alpha}_{\mathcal{L}}\cong \dot{H}^{\alpha}$ for all $\alpha\in(0,1)$. Next, we prove that $\dot{H}^{2}_{\mathcal{L}}\cong\dot{H}^{2}$, since applying \citeA[Theorem~4.36]{lunardi} again implies $\dot{H}^{\alpha}_{\mathcal{L}}\cong\dot{H}^{\alpha}$ for $\alpha\in(1,2)$. First, the operator $$\mathcal{L}:H^{2}(\Gamma)\cap K(\Gamma)\cap D(\Gamma)\to\mathbb{L}_2(\Gamma)$$ is bijective. Indeed, by Theorem~\ref{thm:existence_calL_1} the equation $\mathcal{L}u=f\in\mathbb{L}_2(\Gamma)$ has a unique solution $u\in H^1(\Gamma)\cap D(\Gamma)$. 
    Let $P$ be an edge-wise polynomial in $H^2(\Gamma)$ such that $v=u-P\in\bigoplus_{e\in\mathcal{E}}H^{1}_0(e)$. The function $v$ is a weak solution of 
    \begin{equation}\label{eq:v_weak_1}
        \mathcal{L}v=f-\mathcal{L}P\in\mathbb{L}_2(\Gamma).
    \end{equation}
    Applying \citeA[Theorem~9.15]{gilbarg_trudinger} to the function $v$, we have $v=u-P\in\bigoplus_{e\in\mathcal{E}}H^2(e)$. Since $u\in D(\Gamma)$ we obtain that $u\in H^2(\Gamma)\cap D(\Gamma)$. 
    To show that $u\in K(\Gamma)$, let $v\in\mathcal{V}_K$ and choose $w_n\in H^2(\Gamma)\cap D(\Gamma)$ satisfying $w_n(v)\to1$, and $w_n(x)=0$ for every $x\in\mathcal{V}\backslash\{v\}$, and $\|w_n\|_{\mathbb{L}_2(\Gamma)}\to 0$. Then we have 
    \begin{equation*}
        \begin{split}
            0=\lim_{n\to\infty} \langle f,w_n\rangle &= \lim_{n\to\infty} \left(\langle \kappa^2 u,w_n\rangle - \langle \nabla (a\nabla u), w_n\rangle + \sum_{e\in\mathcal{N}_v}\partial_eu(v)w_n(v)\right) \\&= \sum_{e\in\mathcal{N}_v}\partial_eu(v).
        \end{split}
    \end{equation*}
    Therefore $u\in H^{2}(\Gamma)\cap K(\Gamma)\cap D(\Gamma)$.
    On the other hand, $\mathcal{L}$ maps $\dot{H}^2_{\mathcal{L}}$ bijectively and continuously onto $\dot{H}^0_{\mathcal{L}}\cong\mathbb{L}_2(\Gamma)$. Since $H^2(\Gamma)\cap K(\Gamma)\cap D(\Gamma)\subset\dot{H}^2_{\mathcal{L}}$ we must have 
    $$\dot{H}^2_{\mathcal{L}} = H^2(\Gamma)\cap K(\Gamma)\cap D(\Gamma)\cong \dot{H}^2,$$ 
    by Theorem~\ref{thm:W_chatacterization_3}. The map $\mathcal{L}$ is continuous, and the open mapping theorem implies that $\mathcal{L}^{-1}:\mathbb{L}_2(\Gamma)\to\dot{H}^2$ is continuous, hence $$\frac{1}{C}\|u\|_{\dot{H}^2}\leq \|\mathcal{L}u\|_{\mathbb{L}_2(\Gamma)}\leq C \|u\|_{\dot{H}^2},$$
    for a constant $C>0$ independent of $u$. Since $\|\mathcal{L}u\|_{\mathbb{L}_2(\Gamma)} = \|u\|_{\dot{H}^{2}_{\mathcal{L}}}$ we have $\dot{H}^2_{\mathcal{L}}\cong \dot{H}^2$.
\end{proof}
Similarly to Corollaries~\ref{corr:existence_L_1} and~\ref{corr:existence_L_1_W},  Theorem~\ref{thm:main_iden_2} and Theorem~\ref{thm:main_1} imply the following corollaries.
\begin{corr}\label{corr:well_posedness_calL_2}
    Assume that the function $a$ is Lipschitz, let $(\alpha,\beta)\in(-4,4)\times\left((-2,2)\backslash\{\pm\frac{1}{2},\pm\frac{3}{2}\}\right)$ such that $\alpha+\beta\in(-2,2)\backslash\{\pm\frac{1}{2},\frac{3}{2}\}$, and let 
    $f\in H^{\beta}_{K,D}(\Gamma)$. The fractional PDE
    $$\mathcal{L}^{\alpha/2}u=f,$$
    has a unique solution $u\in H^{\beta+\alpha}_{K,D}(\Gamma)$.
    Furthermore, there exists a constant $C$ which depends on $\alpha$ and $\beta$ only that satisfies
    $$\|u\|_{H^{\alpha+\beta}_{K,D}(\Gamma)}\leq C \|f\|_{H^{\beta}_{K,D}(\Gamma)}.$$
\end{corr}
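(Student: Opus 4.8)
The plan is to mimic the proof of Corollary~\ref{corr:existence_L_1}, replacing the constant-coefficient operator $L$ and its fractional powers by $\mathcal{L}$ and $\mathcal{L}^{\alpha/2}$, and using Theorem~\ref{thm:main_iden_2} as the bridge that transports every statement about the scale $\dot{H}^{\gamma}_{\mathcal{L}}$ to the corresponding statement about $\dot{H}^{\gamma}$, and hence, via Theorem~\ref{thm:main_1}, about $H^{\gamma}_{K,D}(\Gamma)$. The entire argument is then a composition of three identifications together with the fact that the fractional power is an isometric isomorphism between consecutive scales.

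First I would record that, exactly as for $L$, the spectral definition of $\mathcal{L}^{\alpha/2}$ yields an isometric isomorphism $\mathcal{L}^{\alpha/2}\colon \dot{H}^{\gamma}_{\mathcal{L}}\to\dot{H}^{\gamma-\alpha}_{\mathcal{L}}$ for every $\gamma\in\mathbb{R}$, with continuous inverse $\mathcal{L}^{-\alpha/2}$. Taking $\gamma=\alpha+\beta$ produces a bijection $\mathcal{L}^{\alpha/2}\colon \dot{H}^{\alpha+\beta}_{\mathcal{L}}\to\dot{H}^{\beta}_{\mathcal{L}}$, so that for $f\in\dot{H}^{\beta}_{\mathcal{L}}$ the equation $\mathcal{L}^{\alpha/2}u=f$ has the unique solution $u=\mathcal{L}^{-\alpha/2}f\in\dot{H}^{\alpha+\beta}_{\mathcal{L}}$. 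Next I would translate the hypothesis and the conclusion through the identifications: Theorem~\ref{thm:main_iden_2} gives $\dot{H}^{\gamma}_{\mathcal{L}}\cong\dot{H}^{\gamma}$ for $\gamma\in[0,2]$, and dualising this Hilbert-space isomorphism together with the definition $\dot{H}^{-\gamma}_{\mathcal{L}}=(\dot{H}^{\gamma}_{\mathcal{L}})^{*}$ extends it to $\gamma\in[-2,2]$. Composing with Theorem~\ref{thm:main_1} (and its dual at negative order) then yields $\dot{H}^{\gamma}_{\mathcal{L}}\cong\dot{H}^{\gamma}\cong H^{\gamma}_{K,D}(\Gamma)$ for every admissible $\gamma\in(-2,2)$. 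The parameter constraints in the statement are precisely what place both $\beta$ and $\alpha+\beta$ in this admissible set, so $f\in H^{\beta}_{K,D}(\Gamma)$ is identified with an element of $\dot{H}^{\beta}_{\mathcal{L}}$, the solution $u$ lies in $\dot{H}^{\alpha+\beta}_{\mathcal{L}}\cong H^{\alpha+\beta}_{K,D}(\Gamma)$, and the stated bound follows by chaining the norm equivalences of the three isomorphisms with the isometry of $\mathcal{L}^{-\alpha/2}$, the resulting constant depending only on $\alpha$ and $\beta$.

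The main obstacle I anticipate is not the analysis but the bookkeeping at negative orders: one must ensure that the equivalence $\dot{H}^{\gamma}_{\mathcal{L}}\cong\dot{H}^{\gamma}$ is compatible with the duality pairings used to define $H^{\gamma}_{K,D}(\Gamma)$ and $\dot{H}^{\gamma}$ for $\gamma<0$, so that the extended identification is genuinely the dual of the positive-order one and the two scales remain aligned under $\mathcal{L}^{\pm\alpha/2}$. Because all spaces in play are Hilbert spaces and the positive-order identifications are bi-continuous, this compatibility is automatic once the pairings are fixed consistently. The only genuine point requiring care is that Theorem~\ref{thm:main_iden_2} is available only on $[-2,2]$, so one must verify --- as the hypotheses guarantee --- that both endpoints $\beta$ and $\alpha+\beta$ of the fractional-power map lie in $(-2,2)$ and avoid the excluded half-integers, even though $\alpha$ itself is allowed to range over $(-4,4)$.
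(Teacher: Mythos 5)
Your proposal is correct and takes essentially the same route as the paper, which derives this corollary precisely by combining the isometric isomorphism $\mathcal{L}^{\alpha/2}\colon\dot{H}^{\alpha+\beta}_{\mathcal{L}}\to\dot{H}^{\beta}_{\mathcal{L}}$ with the identifications $\dot{H}^{\gamma}_{\mathcal{L}}\cong\dot{H}^{\gamma}$ from Theorem~\ref{thm:main_iden_2} (extended by duality to negative orders) and $\dot{H}^{\gamma}\cong H^{\gamma}_{K,D}(\Gamma)$ from Theorem~\ref{thm:main_1}, exactly as in Corollary~\ref{corr:existence_L_1}. Your parameter bookkeeping --- that $\beta$ and $\alpha+\beta$ must lie in $(-2,2)$ away from the excluded half-integers while $\alpha$ itself may range over $(-4,4)$ --- is the same observation that underlies the paper's stated ranges.
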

\begin{corr}\label{corr:well_posedness_calL_2_W}
    Assume $a$ is Lipschitz, let $\alpha\in(-\frac{3}{2},\frac{5}{2})$, $\epsilon>0$, and $\mathcal{W}$ be a Gaussian white noise defined on $\Gamma$. The SPDE
    $$\mathcal{L}^{\alpha/2}u=\mathcal{W},$$
    has almost surely a unique solution
    $u\in H^{\alpha-\frac{1}{2}-\epsilon}_{K,D}(\Gamma)$.
\end{corr}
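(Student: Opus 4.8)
The plan is to treat the stochastic equation pathwise: on the almost sure event that the noise lands in a suitable negative-order space, the deterministic fractional well-posedness of Corollary~\ref{corr:well_posedness_calL_2} produces the solution, and the Sobolev identification then places it in the asserted space. First, I would fix the orthonormal basis $\{\varphi_i\}_{i\in\mathbb{N}}$ of eigenfunctions of $\mathcal{L}$ (with eigenvalues $\mu_i$) and expand the white noise as $\mathcal{W}=\sum_{i}\xi_i\varphi_i$ with $\{\xi_i\}$ i.i.d.\ standard Gaussians. Second, I would show that $\mathcal{W}\in\dot{H}^{-1/2-\epsilon}_{\mathcal{L}}$ almost surely: since $\|\mathcal{W}\|^2_{\dot{H}^{-1/2-\epsilon}_{\mathcal{L}}}=\sum_i\mu_i^{-1/2-\epsilon}\xi_i^2$ has expectation $\sum_i\mu_i^{-1/2-\epsilon}$, it suffices to verify that this series converges. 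The eigenvalue comparison $\mu_i\asymp\lambda_i$ furnished by the norm equivalence \eqref{eq:L_L_inequality} together with Weyl's law ($\lambda_i\asymp i^2$) gives $\mu_i^{-1/2-\epsilon}\asymp i^{-1-2\epsilon}$, which is summable; hence the expectation is finite and the series is almost surely finite (cf.\ \citeA{bern_gaussian_matern}).

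Third, I would transfer this to the vertex-condition scale. Extending the identifications of Theorem~\ref{thm:main_iden_2} and Theorem~\ref{thm:main_1} to negative orders by duality, namely $\dot{H}^{-s}_{\mathcal{L}}=(\dot{H}^{s}_{\mathcal{L}})^{*}\cong(\dot{H}^{s})^{*}=\dot{H}^{-s}\cong H^{-s}_{K,D}(\Gamma)$ for $s\in(0,2)$, yields $\mathcal{W}\in H^{-1/2-\epsilon}_{K,D}(\Gamma)$ almost surely. Finally, I would apply Corollary~\ref{corr:well_posedness_calL_2} with $f=\mathcal{W}$ and $\beta=-\tfrac12-\epsilon$: since $\alpha\in(-\tfrac32,\tfrac52)$ forces $\alpha+\beta=\alpha-\tfrac12-\epsilon\in(-2,2)$ for small $\epsilon$, the hypotheses are met and $\mathcal{L}^{\alpha/2}u=\mathcal{W}$ admits the unique solution $u=\mathcal{L}^{-\alpha/2}\mathcal{W}\in H^{\alpha-1/2-\epsilon}_{K,D}(\Gamma)$, with uniqueness inherited from the injectivity (indeed the isometry) of $\mathcal{L}^{\alpha/2}$ on the $\dot{H}_{\mathcal{L}}$-scale.

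The main obstacle is bookkeeping around the excluded half-integer parameters of Corollary~\ref{corr:well_posedness_calL_2}: both $\beta=-\tfrac12-\epsilon$ and $\alpha+\beta$ must avoid a finite exceptional set, yet the statement asserts the conclusion for \emph{every} $\epsilon>0$. I would resolve this by a nesting argument: given $\epsilon>0$, choose $\epsilon'\in(0,\epsilon]$ close to $\epsilon$ so that $-\tfrac12-\epsilon'$ and $\alpha-\tfrac12-\epsilon'$ avoid the (discrete) exceptional values, establish $u\in H^{\alpha-1/2-\epsilon'}_{K,D}(\Gamma)$ as above, and then invoke the continuous embedding $H^{\alpha-1/2-\epsilon'}_{K,D}(\Gamma)\hookrightarrow H^{\alpha-1/2-\epsilon}_{K,D}(\Gamma)$, which follows from the Sobolev embedding of Theorem~\ref{thm:sobolev_subspace} (via $H^{\beta}(\Gamma)\cong W^{\beta,2}(\Gamma)$) together with the monotonicity of the $K^n$ and $D^n$ vertex constraints. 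This extends the conclusion from a cofinite set of admissible $\epsilon$ to all $\epsilon>0$.
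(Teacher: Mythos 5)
Your proposal is correct and follows essentially the same route as the paper: the paper also obtains the result by noting $\mathcal{W}\in\dot{H}^{-\frac{1}{2}-\epsilon}$ almost surely (citing \citeA{bern_gaussian_matern} rather than redoing the eigenvalue computation) and then applying the deterministic well-posedness of Corollary~\ref{corr:well_posedness_calL_2} together with the identifications of Theorems~\ref{thm:main_iden_2} and~\ref{thm:main_1}. Your additional details --- deriving $\mu_i\asymp\lambda_i\asymp i^2$ from the form comparison \eqref{eq:L_L_inequality}--\eqref{eq:L_L_inequality_2} and Weyl's law, and the nesting argument handling the excluded half-integer values of $\beta$ and $\alpha+\beta$ --- are sound refinements of bookkeeping the paper leaves implicit, not a different method.
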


To pursue further identifications, we follow the method in \citeA{markov_paper} to prove Theorem~\ref{thm:ident_3} below. 
From here on, let $\alpha\in\mathbb{A}$ be a fixed real number.
\begin{assum}\label{assum:regular_coeff}
    If $\alpha\in\mathbb{A}\cap(2,\infty)$, assume that $a\in\bigoplus_{e\in\mathcal{E}}C^{\ceil{\alpha}-2,1}(e)$ and $\kappa\in\bigoplus_{e\in\mathcal{E}}C^{\ceil{\alpha}-3,1}(e)$.
\end{assum}
\begin{lemma}\label{lem:H_ladder_1}
    Assume \eqref{assum:regular_coeff} and $\alpha\in\mathbb{A}\cap[2,\infty)$. We have
    $$f\in \bigoplus_{e\in\mathcal{E}}H^{\alpha}(e)\Longleftrightarrow f\in\bigoplus_{e\in\mathcal{E}}H^{2}(e),\;\mathcal{L}f\in \bigoplus_{e\in\mathcal{E}}H^{\alpha-2}(e).$$ 
\end{lemma}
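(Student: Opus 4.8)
The plan is to exploit the edge-wise structure of both sides: the two spaces are direct sums over $\mathcal{E}$ and $\mathcal{L}=\kappa^2-\nabla(a\nabla)$ acts edge-wise, so it suffices to prove the equivalence on a single interval $(0,l_e)$ for the ordinary differential operator $\mathcal{L}f=\kappa^2 f-(af')'=\kappa^2 f-a'f'-af''$. Under Assumption~\ref{assum:regular_coeff} I would first record the coefficient regularities on the Lipschitz/Sobolev scale: $a\in C^{\ceil{\alpha}-2,1}(e)=W^{\ceil{\alpha}-1,\infty}(e)$, while $a'$ and $\kappa^2$ lie in $C^{\ceil{\alpha}-3,1}(e)=W^{\ceil{\alpha}-2,\infty}(e)$; crucially, since $\inf_e a>0$, the reciprocal inherits $1/a\in C^{\ceil{\alpha}-2,1}(e)=W^{\ceil{\alpha}-1,\infty}(e)$ via the quotient rule and the lower bound on $a$. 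The case $\alpha=2$ is trivial, so I assume $\alpha>2$, where $\ceil{\alpha}-3\geq 0$ and all these spaces are meaningful.

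The analytic engine is a pointwise multiplier estimate: for $0\leq s\leq m+1$, multiplication by any $g\in C^{m,1}(e)=W^{m+1,\infty}(e)$ is bounded on $H^s(e)$. I would establish this by writing $s=\floor{s}+\sigma$ and, after Leibniz expansion of $\partial^{\floor{s}}(gf)$, reducing to the claim that a Lipschitz function multiplies $H^\sigma(e)$ for $0<\sigma<1$; the latter follows from the Slobodeckij-seminorm splitting $g(x)u(x)-g(y)u(y)=g(x)(u(x)-u(y))+u(y)(g(x)-g(y))$, where the second term is controlled by $\mathrm{Lip}(g)^2\iint |x-y|^{1-2\sigma}u(y)^2<\infty$ (finite since $\sigma<1$ on a bounded interval), needing only $\|u\|_{\mathbb{L}_2}$ rather than $\|u\|_{\mathbb{L}_\infty}$ (cf.~\citeA{di_nezza}). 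With this lemma the forward implication is immediate: for $f\in H^\alpha(e)$ one has $f\in H^2(e)$, and applying the estimate term by term to $\kappa^2 f$, $a'f'$, $af''$ (each at target index $\alpha-2\leq\ceil{\alpha}-2\leq\ceil{\alpha}-1$) places every summand, hence $\mathcal{L}f$, in $H^{\alpha-2}(e)$.

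The reverse implication is an elliptic bootstrap built on uniform ellipticity, which lets me solve for the top derivative $f''=\tfrac1a(\kappa^2 f-a'f'-\mathcal{L}f)$. Starting from $f\in H^2(e)$ with $\mathcal{L}f\in H^{\alpha-2}(e)$, I would show inductively that if $f\in H^r(e)$ for some $2\leq r<\alpha$, then the right-hand side lies in $H^{\min(r-1,\alpha-2)}(e)$: indeed $\kappa^2 f\in H^{\min(r,\ceil{\alpha}-2)}(e)$ and $a'f'\in H^{\min(r-1,\ceil{\alpha}-2)}(e)$ by the multiplier lemma, $\mathcal{L}f\in H^{\alpha-2}(e)$ by hypothesis, and multiplication by $1/a$ preserves this regularity. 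Hence $f''\in H^{\min(r-1,\alpha-2)}(e)$, i.e.\ $f\in H^{\min(r+1,\alpha)}(e)$, so the Sobolev index gains a full unit at each step and reaches $\alpha$ after exactly $\ceil{\alpha}-2$ iterations.

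The main obstacle, and the point requiring the most care, is the multiplier lemma at \emph{fractional} target smoothness $s=\alpha-2$ combined with coefficients of only finite regularity ($C^{m,1}$ rather than $C^\infty$): one must check that the exact budget granted by Assumption~\ref{assum:regular_coeff} suffices for every product arising in the bootstrap, in particular that the least regular factor $a'$ (merely $W^{\ceil{\alpha}-2,\infty}$) still multiplies $H^{\alpha-2}$ and that $1/a$ keeps enough derivatives to multiply $H^{\min(r-1,\alpha-2)}$ at every stage. The restriction $\alpha\in\mathbb{A}$ keeps the intermediate indices away from the critical half-integer thresholds of the one-dimensional Sobolev embedding used to pass between $H^s(e)$ and $C^{k,\lambda}(e)$, ensuring the chain of inclusions and the bootstrap terminate cleanly at order $\alpha$.
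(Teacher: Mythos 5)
Your proposal is correct and follows essentially the same route as the paper: the forward direction by coefficient--multiplier estimates, and the reverse direction by the same elliptic bootstrap, solving $f''=\tfrac{1}{a}\left(\kappa^2 f-a'f'-\mathcal{L}f\right)$ and iterating the gain $k\mapsto\min\{\alpha,k+1\}$ until it stabilizes at $\alpha$. The only difference is one of detail: you make explicit the fractional multiplier lemma (Lipschitz functions acting on $H^\sigma(e)$ via the Slobodeckij splitting) that the paper's terse proof leaves implicit, which is a welcome addition rather than a deviation.
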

\begin{proof}
The direction $\Rightarrow$ is clear. For the direction $\Leftarrow$, suppose that $f\in\bigoplus_{e\in\mathcal{E}}H^{k}(e)$ for a real number $k\geq2$. We have 
$$\Delta f = \frac{1}{a}\left(-\mathcal{L}f + \kappa^2f - (\nabla a)\nabla f\right)\in \bigoplus_{e\in\mathcal{E}}H^{\min\{\alpha-2,k-1\}}(e)$$ 
Since $f,\nabla f\in\mathbb{L}_2(\Gamma)$ we have $f\in\bigoplus_{e\in\mathcal{E}}H^{\min\{\alpha,k+1\}}(e)$. Starting with $k=2$, the recursion $k\mapsto\min\{\alpha,k+1\}$ increases and stabilizes at $\alpha$, thus we have $f\in\bigoplus_{e\in\mathcal{E}}H^{\alpha}(e)$.
\end{proof}

\begin{theorem}\label{thm:ident_3}
    Assume \eqref{assum:regular_coeff} and $\alpha\in\mathbb{A}\cap[1,\infty)$. We have
    \begin{equation}\label{eq:H_calL_main}
    \begin{split}
        \dot{H}^{\alpha}_{\mathcal{L}}\cong \Bigg\{&f\in\bigoplus_{e\in\mathcal{E}}H^{\alpha}(e): \mathcal{L}^{\floor{\frac{\alpha}{2}-\frac{1}{4}}}f\in C(\Gamma)\cap D(\Gamma),\\& \forall m\in\left\{0,\dots,\floor{\frac{\alpha}{2}-\frac{3}{4}}\right\}, \mathcal{L}^mf\in C(\Gamma)\cap D(\Gamma)\cap K(\Gamma)  \Bigg\}.
    \end{split}
    \end{equation} 
\end{theorem}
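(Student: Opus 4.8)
The plan is to prove \eqref{eq:H_calL_main} by induction on the intervals $(2n,2n+2]\cap\mathbb{A}$, $n\in\mathbb{N}_0$, reducing the statement at order $\alpha$ to the one at order $\alpha-2$ through the action of $\mathcal{L}$. Write $\mathcal{H}^{\alpha}$ for the right-hand side of \eqref{eq:H_calL_main}. The engine is a pair of matching ``ladder'' equivalences, one on the energetic side and one on the side of $\mathcal{H}^\alpha$. First I would set up the spectral ladder for $\dot{H}^{\alpha}_{\mathcal{L}}$. Since $\mathcal{L}$ is positive, self-adjoint, and has compact inverse, it admits eigenpairs $(\mu_i,\psi_i)$ with $\{\psi_i\}$ an orthonormal basis and $\dot{H}^{\gamma}_{\mathcal{L}}=\{u:\sum_i\mu_i^{\gamma}\langle u,\psi_i\rangle^2<\infty\}$. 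For $\alpha\geq 2$ this yields, exactly as in the claim \eqref{eq:H_alpha_1_claim_1} established for $L$, the equivalence that $f\in\dot{H}^{\alpha}_{\mathcal{L}}$ if and only if $f\in\dot{H}^2_{\mathcal{L}}$ and $\mathcal{L}f\in\dot{H}^{\alpha-2}_{\mathcal{L}}$, with $\mathcal{L}:\dot{H}^{\alpha}_{\mathcal{L}}\to\dot{H}^{\alpha-2}_{\mathcal{L}}$ an isometry. Invoking $\dot{H}^2_{\mathcal{L}}\cong H^2(\Gamma)\cap K(\Gamma)\cap D(\Gamma)$ from the proof of Theorem~\ref{thm:main_iden_2}, this reads: $f\in\dot{H}^{\alpha}_{\mathcal{L}}$ iff $f\in H^2(\Gamma)\cap K(\Gamma)\cap D(\Gamma)$ and $\mathcal{L}f\in\dot{H}^{\alpha-2}_{\mathcal{L}}$.

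Next I would establish the twin ladder on the $\mathcal{H}^\alpha$ side: $f\in\mathcal{H}^{\alpha}$ iff $f\in H^2(\Gamma)\cap K(\Gamma)\cap D(\Gamma)$ and $\mathcal{L}f\in\mathcal{H}^{\alpha-2}$. The edge-wise regularity part, $f\in\bigoplus_{e}H^{\alpha}(e)\Leftrightarrow f\in\bigoplus_{e}H^{2}(e)$ and $\mathcal{L}f\in\bigoplus_{e}H^{\alpha-2}(e)$, is precisely Lemma~\ref{lem:H_ladder_1}. The vertex conditions transfer via the identity $\mathcal{L}^{m}(\mathcal{L}f)=\mathcal{L}^{m+1}f$ together with the integer floor shifts $\floor{\frac{\alpha-2}{2}-\frac{1}{4}}=\floor{\frac{\alpha}{2}-\frac{1}{4}}-1$ and $\floor{\frac{\alpha-2}{2}-\frac{3}{4}}=\floor{\frac{\alpha}{2}-\frac{3}{4}}-1$: the top Dirichlet--continuity condition on $\mathcal{L}f$ becomes the top condition $\mathcal{L}^{\floor{\alpha/2-1/4}}f\in C(\Gamma)\cap D(\Gamma)$ on $f$; the full conditions $\mathcal{L}^{m}(\mathcal{L}f)\in C(\Gamma)\cap D(\Gamma)\cap K(\Gamma)$ become $\mathcal{L}^{m+1}f\in C(\Gamma)\cap D(\Gamma)\cap K(\Gamma)$ for $m+1\in\{1,\dots,\floor{\alpha/2-3/4}\}$; and the remaining level-$0$ condition $f\in C(\Gamma)\cap D(\Gamma)\cap K(\Gamma)$ is supplied by $f\in H^2(\Gamma)\cap K(\Gamma)\cap D(\Gamma)$. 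Assumption~\ref{assum:regular_coeff} is exactly what legitimizes the repeated applications of $\mathcal{L}$ and keeps each $\mathcal{L}^{m}f$ in the asserted edge space.

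With both ladders in hand the induction runs cleanly. For the base case $\alpha\in(0,2]\cap\mathbb{A}$, I would invoke Theorem~\ref{thm:main_iden_2} to get $\dot{H}^{\alpha}_{\mathcal{L}}\cong\dot{H}^{\alpha}$ and Theorem~\ref{thm:main_1} to get $\dot{H}^{\alpha}\cong H^{\alpha}(\Gamma)\cap K^{\floor{\alpha/2+1/4}}(\Gamma)\cap D^{\floor{\alpha/2+3/4}}(\Gamma)$; since for $\alpha\leq 2$ only the level $m=0$ is active, every condition is a condition on $f$ itself and no genuine power of $\mathcal{L}$ appears, so this description coincides with $\mathcal{H}^{\alpha}$ irrespective of whether $\mathcal{L}=L$. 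For the inductive step on $(2n,2n+2]\cap\mathbb{A}$ one has $\alpha-2\in(0,\infty)\cap\mathbb{A}$ covered by the hypothesis, giving $\dot{H}^{\alpha-2}_{\mathcal{L}}\cong\mathcal{H}^{\alpha-2}$; feeding this into the two ladders identifies $\dot{H}^{\alpha}_{\mathcal{L}}$ and $\mathcal{H}^{\alpha}$ as sets, while the norm equivalence is propagated by the isometry of $\mathcal{L}$ on the $\dot{H}_{\mathcal{L}}$-scale combined with the edge-wise elliptic estimate underlying Lemma~\ref{lem:H_ladder_1} and the positivity of $\mathcal{L}$, which controls the lower-order $\mathbb{L}_2$ term.

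I expect the main obstacle to be the second ladder, namely the careful matching of the layered vertex conditions through $\mathcal{L}$. One must verify across the three sub-cases of the fractional part of $\alpha/2$ that the single Dirichlet-only top level and the full-condition lower levels realign correctly (the top index either coincides with, or sits one above, the highest Kirchhoff level), and confirm that each $\mathcal{L}^{m}f$ carries exactly the edge-wise regularity $H^{\alpha-2m}(e)$ needed for its continuity and Dirichlet traces to be meaningful, and, strictly below the top level, for its Kirchhoff trace as well; this regularity accounting is precisely where Assumption~\ref{assum:regular_coeff} enters, and keeping the index bookkeeping consistent with \eqref{eq:H_calL_main} is the step most prone to error.
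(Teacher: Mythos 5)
Your skeleton is genuinely close to the paper's: the paper also unfolds $\dot{H}^{\alpha}_{\mathcal{L}}$ through powers of $\mathcal{L}$ (its Step~2 proves exactly your spectral ladder, iterated, in the form \eqref{eq:H_calL_1}), uses Lemma~\ref{lem:H_ladder_1} for the edge-wise bookkeeping, and resolves the tail via Theorems~\ref{thm:main_iden_2} and~\ref{thm:main_1}. But your induction in steps of two has a concrete hole: the inductive step for $\alpha\in(2n,2n+\frac{1}{2})$ invokes the hypothesis at order $\alpha-2$, which eventually lands in $(0,\frac{1}{2})$. There the right-hand side of \eqref{eq:H_calL_main} is not even well-formed, since $\floor{\frac{\alpha-2}{2}-\frac{1}{4}}=-1$, and the correct statement is $\dot{H}^{\alpha-2}_{\mathcal{L}}\cong\bigoplus_{e\in\mathcal{E}}H^{\alpha-2}(e)$ with \emph{no} vertex conditions at all; this also falsifies your base-case claim that on $(0,2]$ ``only the level $m=0$ is active'' with every condition a condition on $f$ itself (that reading is wrong on $(0,\frac{1}{2})$, which is precisely why the theorem is stated for $\alpha\geq 1$). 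The gap is repairable — either prove the base case for all residues in $(0,2]$ with the convention that empty index sets impose nothing and recheck the index identities, or do what the paper does: unfold in a single shot to depth $n=\floor{\frac{\alpha}{2}-\frac{1}{4}}$, which guarantees the tail order $\alpha-2n\in\left(\frac{1}{2},\frac{5}{2}\right)$ where Theorem~\ref{thm:main_iden_2} (extended to $[2,\frac{5}{2})$ by the paper's Step~3) applies directly — but as written your induction does not close.

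The second weak point is the norm equivalence, which you dispatch in one clause (``isometry of $\mathcal{L}$ plus the elliptic estimate underlying Lemma~\ref{lem:H_ladder_1} plus positivity'') but which is where the paper spends most of its effort (Step~1). Lemma~\ref{lem:H_ladder_1} is purely qualitative; to propagate norms you need the quantitative bound $\|\mathcal{L}f\|_{H^{\alpha-2}(e)}\leq C\|f\|_{H^{\alpha}(e)}$, and for non-integer $\alpha$ multiplication by $a$ and $\kappa^{2}$ on fractional $H^{\alpha-2}(e)$ is not free: the paper proves the inequality at integer orders via commutator estimates against the constant-coefficient auxiliary operator $\check{L}$ (using Assumption~\ref{assum:regular_coeff}), interpolates to fractional orders, and only then obtains two-sided equivalence from closedness of $\dot{H}^{\alpha}_{\mathcal{L}}$ in $\bigoplus_{e\in\mathcal{E}}H^{\alpha}(e)$ together with the open mapping theorem. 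Your route can be made to work along the same lines, but the interpolation/multiplication step and the open-mapping (or closed-graph) argument must be supplied; neither follows from positivity of $\mathcal{L}$ or from the isometry $\mathcal{L}:\dot{H}^{\alpha}_{\mathcal{L}}\to\dot{H}^{\alpha-2}_{\mathcal{L}}$ alone.
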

\begin{proof}We prove the theorem in four steps. Step~1 establishes the equivalence of the norms $\|.\|_{\dot{H}_{\mathcal{L}}^{\alpha}}$ and $\|.\|_{\bigoplus_{e\in\mathcal{E}}H^{\alpha}(e)}$ on the space $\dot{H}^{\alpha}_{\mathcal{L}}$, Step~2 provides a general characterization of $\dot{H}^{\alpha}_{\mathcal{L}}$, and in Step~3 and~4 we use the characterization from Step 2 to prove the theorem’s statement, first for $\alpha\in[2,5/2)$, then for general $\alpha$.\hfill\\

\noindent\textbf{Step 1:} By Theorem~\ref{thm:main_iden_2}, and Theorem~\ref{thm:main_1} we have $\dot{H}^{\alpha}_{\mathcal{L}}\subset\bigoplus_{e\in\mathcal{E}}H^{\alpha}(e)$ for $\alpha\in[0,2]$, applying Lemma~\ref{lem:H_ladder_1} inductively yields $$\dot{H}^{\alpha}_{\mathcal{L}}\subset\bigoplus_{e\in\mathcal{E}}H^{\alpha}(e),\quad \alpha\geq0.$$ Now, we will show inductively that there exists a constant $C$ that depends only on $\alpha$ and satisfies 
\begin{equation}\label{eq:iden_f_inequality_1}
    \|f\|_{\dot{H}^{\alpha}_{\mathcal{L}}}^2\leq C \sum_{e\in\mathcal{E}}\|f\|_{H^{\alpha}(e)}^2, \quad f\in\dot{H}^{\alpha}_{\mathcal{L}}.
\end{equation} By Theorem~\ref{thm:main_iden_2} Inequality~\eqref{eq:iden_f_inequality_1} holds for $\alpha\in[0,2]$. Assume that the inequality holds for $\alpha\in[2n-2,2n]$ with $n\in\mathbb{N}$, we will show that it holds for $\alpha\in[2n,2n+2]$. Let $\alpha\in[2n,2n+2]$. First, we prove the inequality for $\alpha\in\{2n+1,2n+2\}$. Using the the fact that $\mathcal{L}:\dot{H}^{\alpha}_{\mathcal{L}}\to\dot{H}^{\alpha-2}_{\mathcal{L}}$ is a bijective isometry and the induction hypothesis we have 
\begin{equation}\label{eq:f_H_L_1}
    \begin{split}
        \|f\|_{\dot{H}^{\alpha}_{\mathcal{L}}}^2= \|\mathcal{L}f\|_{\dot{H}^{\alpha-2}_{\mathcal{L}}}^2&\leq C \sum_{e\in\mathcal{E}}\|\mathcal{L}f\|_{H^{\alpha-2}(e)}^2 \\
        &\leq C\sum_{e\in\mathcal{E}} \|\mathcal{L}f\|_{H^{\alpha-3}(e)}^2 +C\sum_{e\in\mathcal{E}}\|\nabla^{\alpha-2}\mathcal{L}f -\check{L}f\|^2_{\mathbb{L}_2(e)} \\&+C \sum_{e\in\mathcal{E}}\|\check{L}f\|^2_{\mathbb{L}_2(e)},
    \end{split}
\end{equation}
where $\check{L}f = \nabla^{\alpha-2}\kappa^2 f - \nabla(\nabla^{\alpha-2}a \nabla f)$.
By Assumption \eqref{assum:regular_coeff} we have
\begin{equation*}
    \begin{split}
        \|\mathcal{L}f\|_{H^{\alpha-3}(e)}^2\leq\;&C\left( \|\kappa^2f\|_{H^{\alpha-3}(e)}^2 + \|\nabla a \nabla f\|_{H^{\alpha-3}(e)}^2 + \|a \nabla^2 f\|_{H^{\alpha-3}(e)}^2\right)\\
        \leq\;& C \|f\|_{H^{\alpha}(e)}^2.
    \end{split}
\end{equation*}
Similarly,
\begin{equation*}
    \begin{split}
        &\mathrel{\hphantom{\leq}}\|\nabla^{\alpha-2}\mathcal{L}f -\check{L}f\|^2_{\mathbb{L}_2(e)}\\&\leq C\sum_{j=1}^{\alpha-2}\|\nabla^{\alpha-2-j}\kappa^2\nabla^jf\|_{\mathbb{L}_2(e)}^2 + C \sum_{j=1}^{\alpha-1}\|\nabla^{\alpha-1-j}a\nabla^{j+1}f\|_{\mathbb{L}_2(e)}^2\\
        & \leq C\sum_{j=1}^{\alpha-2}\|\nabla^jf\|_{\mathbb{L}_2(e)}^2 + C \sum_{j=1}^{\alpha-1}\|\nabla^{j+1}f\|_{\mathbb{L}_2(e)}^2\leq C\|f\|_{H^{\alpha}(e)}^2.
    \end{split}
\end{equation*}
By applying Theorem~\ref{thm:main_iden_2} and Theorem~\ref{thm:main_1} to the operator $\check{L}$ we have 
\begin{equation*}
    \begin{split}
        \|\check{L}f\|_{\mathbb{L}_2(e)}^2 \leq  \|f\|_{\dot{H}_{\check{L}}^{2}}^2
        \leq C\|f\|_{H^{2}(\Gamma)}^2\leq C\|f\|_{H^{\alpha}(\Gamma)}^2
        = C\sum_{e\in\mathcal{E}}\|f\|_{H^{\alpha}(e)}^2.
    \end{split}
\end{equation*}
This proves \eqref{eq:f_H_L_1} for $\alpha\in\{2n+1,2n+2\}$. Assume that $\alpha\in (2n,2n+2)\backslash\{2n+1\}$, and recall that by Lemma~\ref{lem:direct_sum_interpolation_lemma} we have $$\bigoplus_{e\in\mathcal{E}}H^{\alpha}(e)\cong \left[\bigoplus_{e\in\mathcal{E}}H^{\floor{\alpha}}(e), \bigoplus_{e\in\mathcal{E}}H^{\ceil{\alpha}}(e)\right]_{\alpha-\floor{\alpha}},$$
and by \citeA[Theorem~4.36]{lunardi} we have $\dot{H}^{\alpha}\cong\left[\dot{H}^{\floor{\alpha}},\dot{H}^{\ceil{\alpha}}\right]$. By the definition of the interpolation norm (cf. \citeA[Section~1.1]{lunardi}) Inequality~\eqref{eq:f_H_L_1} holds for $\alpha\in (2n,2n+2)\backslash\{2n+1\}$.
Therefore the space $\dot{H}^{\alpha}_{\mathcal{L}}$ is closed in $\bigoplus_{e\in\mathcal{E}}H^{\alpha}(e)$ hence Banach under the norm $\|.\|_{\bigoplus_{e\in\mathcal{E}}H^{\alpha}(e)}$ (by an argument similar to Case~4 in Theorem~\ref{thm:main_1}). The open mapping theorem implies the equivalence of the norms $\|.\|_{\dot{H}^{\alpha}_{\mathcal{L}}}$ and $\|.\|_{\bigoplus_{e\in\mathcal{E}}H^{\alpha}(e)}$ on $\dot{H}^{\alpha}_{\mathcal{L}}$.\\
\medskip

\noindent\textbf{Step 2:} We prove inductively that for every $n\in\{0,\dots,\floor{\frac{\alpha}{2}}\}$ the following identification holds
    \begin{equation}\label{eq:H_calL_1}
        \dot{H}^{\alpha}_{\mathcal{L}}\cong \left\{f\in\bigoplus_{e\in\mathcal{E}}H^{\alpha}(e): \mathcal{L}^{n}f\in \dot{H}^{\alpha-2n}_{\mathcal{L}}, \forall m\in\{0,\dots,n-1\}, \mathcal{L}^mf\in \dot{H}^2_{\mathcal{L}}  \right\}.
    \end{equation} 
    It suffices to show the equality of sets in \eqref{eq:H_calL_1} since the norms are equivalent by Step~1. Let's denote by $\mathcal{A}_{\mathcal{L}}^n$ the left hand side of \eqref{eq:H_calL_1}. For $n=0$, clearly $\dot{H}^{\alpha}=\mathcal{A}^0_{\mathcal{L}}$. Assume that $\alpha\geq3$, and that \eqref{eq:H_calL_1} holds for $n\leq \floor{\frac{\alpha}{2}}-1$, we will prove the equality for $n+1$. We have that $\mathcal{L}^nf\in \dot{H}^{\alpha-2n}_{\mathcal{L}}$ implies $\mathcal{L}^{n+1}f\in \dot{H}^{\alpha-2n-2}_{\mathcal{L}}$ and by the conditions of $n$ we have $\mathcal{L}^nf\in \dot{H}^{\alpha-2n}_{\mathcal{L}}\subset \dot{H}^2_{\mathcal{L}}$. Thus $\mathcal{A}_{\mathcal{L}}^n\subset \mathcal{A}_{\mathcal{L}}^{n+1}$. On the other hand, by the definition of the $\dot{H}_{\mathcal{L}}$ spaces, $\mathcal{L}^{n+1}f\in \dot{H}^{\alpha-2n-2}_{\mathcal{L}}$ implies $\mathcal{L}^nf\in \dot{H}^{\alpha-2n}_{\mathcal{L}}$, hence $\mathcal{A}_{\mathcal{L}}^{n+1}\subset \mathcal{A}_{\mathcal{L}}^{n}$. Therefore Equation~\eqref{eq:H_calL_1} holds.\\
    \medskip
    
    \noindent\textbf{Step 3:} Equation \eqref{eq:H_calL_1} and Theorem~\ref{thm:main_iden_2} applied to $\alpha\in[2,5/2)$ with $n=1$ yields $\dot{H}^{\alpha}_{\mathcal{L}}\cong H^{\alpha}(\Gamma)\cap K(\Gamma)\cap D(\Gamma)$.\\
    \medskip
    
    \noindent\textbf{Step 4:} Finally \eqref{eq:H_calL_main} follows for $\alpha\geq 1$ by setting $n=\floor{\frac{\alpha}{2} - \frac{1}{4}}$ and using Lemma~\ref{lem:H_ladder_1} and Theorems \ref{thm:main_iden_2} and \ref{thm:main_1}, noticing that $\alpha-2n\in(\frac{1}{2},\frac{5}{2})$.
\end{proof}

We have seen in the proof of Theorem~\ref{thm:ident_3} that $$\dot{H}^{\alpha}_{\mathcal{L}}\cong H^{\alpha}(\Gamma)\cap K(\Gamma)\cap D(\Gamma), \quad \forall \alpha\in\left(2,\frac{5}{2}\right).$$
As a result, Corollaries~\ref{corr:well_posedness_calL_2} and~\ref{corr:well_posedness_calL_2_W} can be improved to the following two corollaries.\begin{corr}\label{corr:well_posedness_calL_2_2}
    Assume that $a\in\bigoplus_{e\in\mathcal{E}}C^{1,1}(e)$ and $\kappa^2\in\bigoplus_{e\in\mathcal{E}}C^{0,1}(e)$. Let $(\alpha,\beta)\in(-5,5)\times\left(\left(-\frac{5}{2},\frac{5}{2}\right)\backslash\{\pm\frac{1}{2},\pm\frac{3}{2}\}\right)$ such that $\alpha+\beta\in\left(-\frac{5}{2},\frac{5}{2}\right)\backslash\{\pm\frac{1}{2},\pm\frac{3}{2}\}$, and let 
    $f\in H^{\beta}_{K,D}(\Gamma)$. The fractional PDE
    $$\mathcal{L}^{\alpha/2}u=f$$
    has a unique solution $u\in H^{\beta+\alpha}_{K,D}(\Gamma)$.
    Furthermore, there exists a constant $C$ which depends on $\alpha$ and $\beta$ only that satisfies
    $$\|u\|_{H^{\alpha+\beta}_{K,D}(\Gamma)}\leq C \|f\|_{H^{\beta}_{K,D}(\Gamma)}.$$
\end{corr}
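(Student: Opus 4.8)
The plan is to follow the proof of Corollary~\ref{corr:well_posedness_calL_2} essentially verbatim; the only new ingredient is that the identification $\dot{H}^{\gamma}_{\mathcal{L}}\cong H^{\gamma}_{K,D}(\Gamma)$ now holds on the wider punctured interval $\gamma\in(-\tfrac52,\tfrac52)\setminus\{\pm\tfrac12,\pm\tfrac32\}$ rather than merely on $(-2,2)\setminus\{\pm\tfrac12,\pm\tfrac32\}$. Once this identification is available for both $\gamma=\beta$ and $\gamma=\beta+\alpha$ (which lie in the admissible range by hypothesis), everything reduces to the isometric isomorphism $\mathcal{L}^{\alpha/2}:\dot{H}^{\beta+\alpha}_{\mathcal{L}}\to\dot{H}^{\beta}_{\mathcal{L}}$.

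First I would establish the identification on the nonnegative part of the range. For $\gamma\in[0,2]\setminus\{\tfrac12,\tfrac32\}$ I would combine Theorem~\ref{thm:main_iden_2}, giving $\dot{H}^{\gamma}_{\mathcal{L}}\cong\dot{H}^{\gamma}$, with Theorem~\ref{thm:main_1}, giving $\dot{H}^{\gamma}\cong H^{\gamma}_{K,D}(\Gamma)$, so that $\dot{H}^{\gamma}_{\mathcal{L}}\cong H^{\gamma}_{K,D}(\Gamma)$. For $\gamma\in(2,\tfrac52)$ I would invoke Theorem~\ref{thm:ident_3}: the present hypotheses $a\in\bigoplus_{e\in\mathcal{E}}C^{1,1}(e)$ and $\kappa^2\in\bigoplus_{e\in\mathcal{E}}C^{0,1}(e)$ are exactly Assumption~\ref{assum:regular_coeff} with $\ceil{\gamma}=3$, and on this interval $\floor{\tfrac{\gamma}{2}+\tfrac14}=\floor{\tfrac{\gamma}{2}+\tfrac34}=1$, so the conclusion $\dot{H}^{\gamma}_{\mathcal{L}}\cong H^{\gamma}(\Gamma)\cap K(\Gamma)\cap D(\Gamma)$ is precisely $\dot{H}^{\gamma}_{\mathcal{L}}\cong H^{\gamma}_{K,D}(\Gamma)$. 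This covers all $\gamma\in[0,\tfrac52)\setminus\{\tfrac12,\tfrac32\}$.

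The negative orders $\gamma\in(-\tfrac52,0)\setminus\{-\tfrac12,-\tfrac32\}$ I would handle by duality. Both $\dot{H}^{\gamma}_{\mathcal{L}}$ and $H^{\gamma}_{K,D}(\Gamma)$ are, by definition, the duals of $\dot{H}^{-\gamma}_{\mathcal{L}}$ and $H^{-\gamma}_{K,D}(\Gamma)$, taken with respect to the common $\mathbb{L}_2(\Gamma)$ pivot. Dualizing the topological isomorphism $\dot{H}^{-\gamma}_{\mathcal{L}}\cong H^{-\gamma}_{K,D}(\Gamma)$ just established then yields $\dot{H}^{\gamma}_{\mathcal{L}}\cong H^{\gamma}_{K,D}(\Gamma)$ with equivalent norms. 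The point requiring care is that the two pivot pairings coincide; this holds because both positive-order spaces embed densely and continuously into $\mathbb{L}_2(\Gamma)$, so the $\mathbb{L}_2(\Gamma)$ inner product induces the same bounded pairing in each realization.

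Finally, with the identification in hand for $\gamma=\beta$ and $\gamma=\beta+\alpha$, I would argue exactly as in Corollary~\ref{corr:well_posedness_calL_2}: given $f\in H^{\beta}_{K,D}(\Gamma)\cong\dot{H}^{\beta}_{\mathcal{L}}$, the isometric isomorphism $\mathcal{L}^{\alpha/2}:\dot{H}^{\beta+\alpha}_{\mathcal{L}}\to\dot{H}^{\beta}_{\mathcal{L}}$ produces a unique $u\in\dot{H}^{\beta+\alpha}_{\mathcal{L}}\cong H^{\beta+\alpha}_{K,D}(\Gamma)$ solving $\mathcal{L}^{\alpha/2}u=f$, and the chain $\|u\|_{H^{\beta+\alpha}_{K,D}}\asymp\|u\|_{\dot{H}^{\beta+\alpha}_{\mathcal{L}}}=\|\mathcal{L}^{\alpha/2}u\|_{\dot{H}^{\beta}_{\mathcal{L}}}=\|f\|_{\dot{H}^{\beta}_{\mathcal{L}}}\asymp\|f\|_{H^{\beta}_{K,D}}$ delivers the stated estimate with constant depending only on $\alpha,\beta$ (through the norm-equivalence constants of the identifications). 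I expect the main obstacle to be the bookkeeping in the duality step: checking that the pivot pairings genuinely match and that the endpoints and half-integers are consistently excluded, so that the isomorphism is available on the entire punctured interval $(-\tfrac52,\tfrac52)\setminus\{\pm\tfrac12,\pm\tfrac32\}$ and hence for both $\beta$ and $\beta+\alpha$.
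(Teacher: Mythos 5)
Your proposal is correct and takes essentially the same route as the paper: the paper derives this corollary exactly from the identification $\dot{H}^{\gamma}_{\mathcal{L}}\cong H^{\gamma}(\Gamma)\cap K(\Gamma)\cap D(\Gamma)$ for $\gamma\in\left(2,\frac{5}{2}\right)$ established in Step~3 of the proof of Theorem~\ref{thm:ident_3}, combined with Theorems~\ref{thm:main_iden_2} and~\ref{thm:main_1} on $[0,2]\backslash\{\frac{1}{2},\frac{3}{2}\}$, duality for negative orders, and the isometric mapping property of $\mathcal{L}^{\alpha/2}$ between the $\dot{H}_{\mathcal{L}}$ scales, as in Corollary~\ref{corr:well_posedness_calL_2}. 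The one cosmetic point is that Assumption~\ref{assum:regular_coeff} is phrased in terms of $\kappa$ while the corollary assumes $\kappa^2\in\bigoplus_{e\in\mathcal{E}}C^{0,1}(e)$; since $\inf_{x\in\Gamma}\kappa(x)>0$, these Lipschitz conditions are equivalent, so your invocation of Theorem~\ref{thm:ident_3} on $\left(2,\frac{5}{2}\right)$ is legitimate.
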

\begin{corr}\label{corr:well_posedness_calL_2_2_W}
    Assume that $a\in\bigoplus_{e\in\mathcal{E}}C^{1,1}(e)$ and $\kappa^2\in\bigoplus_{e\in\mathcal{E}}C^{0,1}(e)$. Let $\alpha\in\left(-2,3\right)$, $\epsilon>0$, and $\mathcal{W}$ be a Gaussian white noise defined on $\Gamma$. The SPDE
    $$\mathcal{L}^{\alpha/2}u=\mathcal{W}$$
    has almost surely a unique solution
    $$u\in H^{\alpha-\frac{1}{2}-\epsilon}_{K,D}(\Gamma).$$
\end{corr}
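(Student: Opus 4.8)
The plan is to mirror the argument used for Corollary~\ref{corr:existence_L_1_W}, replacing the operator $L$ by $\mathcal{L}$ and exploiting the enlarged range of identifications now available through Theorem~\ref{thm:ident_3}. The whole statement reduces to two ingredients: the regularity of the white noise measured in the scale $\dot{H}^{s}_{\mathcal{L}}$, and the identification of $\dot{H}^{s}_{\mathcal{L}}$ with $H^{s}_{K,D}(\Gamma)$ for the relevant exponent $s=\alpha-\frac12-\epsilon$.

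First I would record that, exactly as for $L$ in Section~\ref{sec:appli_1}, the fractional power $\mathcal{L}^{\alpha/2}$ extends to an isometric isomorphism $\dot{H}^{\beta}_{\mathcal{L}}\to\dot{H}^{\beta-\alpha}_{\mathcal{L}}$ for every $\beta\in\mathbb{R}$, with continuous inverse $\mathcal{L}^{-\alpha/2}$; this immediately produces the unique solution candidate $u=\mathcal{L}^{-\alpha/2}\mathcal{W}$ and forces uniqueness by injectivity of $\mathcal{L}^{\alpha/2}$. Next I would establish $\mathcal{W}\in\dot{H}^{-\frac12-\epsilon}_{\mathcal{L}}$ almost surely. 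Since $\frac12+\epsilon\in[0,2]$ for small $\epsilon$, Theorem~\ref{thm:main_iden_2} together with duality gives $\dot{H}^{-\frac12-\epsilon}_{\mathcal{L}}\cong\dot{H}^{-\frac12-\epsilon}$, so the required membership follows from the known fact $\mathcal{W}\in\dot{H}^{-\frac12-\epsilon}$ (see \citeA{bern_gaussian_matern}); equivalently it follows from a Karhunen--Lo\`eve expansion of $\mathcal{W}$ in the eigenbasis of $\mathcal{L}$ together with the eigenvalue asymptotics $\mu_i\asymp i^2$. Applying $\mathcal{L}^{-\alpha/2}$ then yields $u\in\dot{H}^{\alpha-\frac12-\epsilon}_{\mathcal{L}}$ almost surely.

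It remains to identify $\dot{H}^{s}_{\mathcal{L}}\cong H^{s}_{K,D}(\Gamma)$ at $s=\alpha-\frac12-\epsilon$. For $\alpha\in(-2,3)$ one has $s\in(-\frac52,\frac52)$. For $s\in[0,2]$ this is Theorem~\ref{thm:main_iden_2} combined with Theorem~\ref{thm:main_1}; for $s\in(2,\frac52)$ it is the identification $\dot{H}^{s}_{\mathcal{L}}\cong H^{s}(\Gamma)\cap K(\Gamma)\cap D(\Gamma)=H^{s}_{K,D}(\Gamma)$ recorded after Theorem~\ref{thm:ident_3} (here $\floor{\frac{s}{2}+\frac14}=\floor{\frac{s}{2}+\frac34}=1$); and for negative $s\in(-\frac52,0)$ it follows by dualizing the identification at $-s\in(0,\frac52)$, since $H^{s}_{K,D}(\Gamma)=(H^{-s}_{K,D}(\Gamma))^{*}$ and $\dot{H}^{s}_{\mathcal{L}}=(\dot{H}^{-s}_{\mathcal{L}})^{*}$. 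The main obstacle is that the identification theorems exclude the exceptional half-integers, so $s$ must avoid that set; this is handled by perturbing $\epsilon$: replacing it by a slightly larger $\epsilon'$ with $\alpha-\frac12-\epsilon'$ admissible gives $u\in H^{\alpha-\frac12-\epsilon'}_{K,D}(\Gamma)\hookrightarrow H^{\alpha-\frac12-\epsilon}_{K,D}(\Gamma)$ via the embedding of Theorem~\ref{thm:sobolev_subspace}. Assembling these pieces delivers $u\in H^{\alpha-\frac12-\epsilon}_{K,D}(\Gamma)$ and completes the proof.
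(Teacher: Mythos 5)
Your overall strategy is exactly the paper's: the paper derives this corollary, without a separate written proof, from the chain $\mathcal{W}\in\dot{H}^{-\frac12-\epsilon}$, the isometric isomorphism $\mathcal{L}^{-\alpha/2}:\dot{H}^{\beta}_{\mathcal{L}}\to\dot{H}^{\beta+\alpha}_{\mathcal{L}}$, and the identification $\dot{H}^{s}_{\mathcal{L}}\cong H^{s}_{K,D}(\Gamma)$, which Step~3 of the proof of Theorem~\ref{thm:ident_3} extends to $s\in(2,\tfrac52)$ (this is precisely what widens the admissible range from $\alpha\in(-\tfrac32,\tfrac52)$ in Corollary~\ref{corr:well_posedness_calL_2_W} to $\alpha\in(-2,3)$ here). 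Your treatment of the noise regularity, of uniqueness via injectivity of $\mathcal{L}^{\alpha/2}$, of the range bookkeeping $s=\alpha-\tfrac12-\epsilon\in(-\tfrac52,\tfrac52)$, and of the negative orders by duality all match what the paper intends, and your observation that $\floor{\frac{s}{2}+\frac14}=\floor{\frac{s}{2}+\frac34}=1$ for $s\in(2,\tfrac52)$, so that $H^{s}_{K,D}(\Gamma)=H^{s}(\Gamma)\cap K(\Gamma)\cap D(\Gamma)$, is the correct reconciliation of the two notations.

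There is, however, one step that fails as written: your perturbation of $\epsilon$ goes in the wrong direction. Replacing $\epsilon$ by a \emph{larger} $\epsilon'$ lowers the exponent, and Theorem~\ref{thm:sobolev_subspace} embeds higher-order spaces into lower-order ones, so the embedding runs as $H^{\alpha-\frac12-\epsilon}_{K,D}(\Gamma)\hookrightarrow H^{\alpha-\frac12-\epsilon'}_{K,D}(\Gamma)$ — the reverse of what you claim. Membership in the larger space $H^{\alpha-\frac12-\epsilon'}_{K,D}(\Gamma)$ gives you nothing about the target space. The fix is immediate: since $\mathcal{W}\in\dot{H}^{-\frac12-\epsilon'}$ for \emph{every} $\epsilon'>0$ and the exceptional set $\{n+\frac12:n\in\mathbb{Z}\}$ is discrete, choose $\epsilon'\in(0,\epsilon)$ with $s'=\alpha-\frac12-\epsilon'$ admissible (and, taking $\epsilon'$ small enough, $s'\in(-\tfrac52,\tfrac52)$); then $u\in\dot{H}^{s'}_{\mathcal{L}}\cong H^{s'}_{K,D}(\Gamma)\hookrightarrow H^{\alpha-\frac12-\epsilon}_{K,D}(\Gamma)$, with the last embedding now correctly oriented ($s'>\alpha-\frac12-\epsilon$, using Theorem~\ref{thm:sobolev_subspace} for positive orders and its dual for negative ones). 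With this one-line correction your argument is complete and coincides with the paper's.
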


\bibliographystyle{apacite}
\bibliography{refs}

\end{document}